\let\hat\widehat
\newtheorem{thm}{Theorem}
\newtheorem{lem}[thm]{Lemma}
\newtheorem{cor}[thm]{Corollary}
\newtheorem{thm3}{Theorem}
\newtheorem{example}[thm3]{Example}
\newtheorem{thm4}{Theorem}
\newtheorem{definition}[thm4]{Definition}
\theoremstyle{remark}
\newtheorem{remark}{Remark}
\newcommand\K{\mathbb{K}}
\newcommand\R{\mathbb{R}}
\newcommand\E{\mathbb{E}}
\newcommand\cL{{\cal L}}
\newskip\beforeproofvskip
\newskip\afterproofvskip
\def\prooftag{Proof}
\def\proofskip{\enspace}
\def\proof{\@ifnextchar[{\@@proof}{\@proof}}  
\def\@startproof{\par\vskip\beforeproofvskip\leavevmode}
\def\@proof{\@startproof{\scshape\prooftag.}\proofskip}
\def\@@proof[#1]{\@startproof {\scshape\prooftag #1.}\proofskip}
\let\hat\widehat
\let\tilde\widetilde
\begin{document}

\begin{frontmatter}

\title{Generalized Cluster Trees and Singular Measures}
\runtitle{Generalized Cluster Trees}

\begin{aug}
  \author{\fnms{Yen-Chi}
    \snm{Chen}\ead[label=e1]{yenchic@uw.edu}}
  \affiliation{Department of Statistics\\University of Washington}
  \runauthor{Y.-C. Chen}

  \address{Department of Statistics\\University of Washington\\
Box 354322\\	Seattle, WA 98195 \\
          \printead{e1}}
        \today
\end{aug}

\begin{abstract}
In this paper, we study the $\alpha$-cluster tree ($\alpha$-tree) under both singular and nonsingular measures.
The $\alpha$-tree uses probability contents within a set created by the ordering of points
to construct
a cluster tree so that it is well-defined even for singular measures.
We first derive the convergence rate for a density level set around critical points,
which leads to the convergence rate for estimating an $\alpha$-tree under nonsingular measures.
For singular measures, we study how the kernel density estimator (KDE) behaves
and prove that the KDE is not uniformly consistent but pointwise consistent after rescaling. 
We further prove that the estimated $\alpha$-tree fails to converge in the $L_\infty$ metric but is still consistent
under the integrated distance.
We also observe a new type of critical points--the dimensional critical points (DCPs)--of a singular measure.
DCPs are points that contribute to cluster tree topology but 
cannot be defined using density gradient. 
Building on the analysis of the KDE and DCPs, 
we prove the topological consistency of an estimated $\alpha$-tree.
\end{abstract}

\begin{keyword}[class=MSC]
\kwd[Primary ]{62G20}
\kwd[; secondary ]{62G05, 62G07}
\end{keyword}

\begin{keyword}
\kwd{cluster tree}
\kwd{kernel density estimator}
\kwd{level set}
\kwd{singular measure}
\kwd{critical points}
\kwd{topological data analysis}
\end{keyword}

\end{frontmatter}

\section{Introduction}

Given a function $f$ defined on a smooth manifold $\mathcal{M}$, the cluster tree
of $f$ is a tree structure representing the creation and merging of connected components
of a level set $\{x: f(x)\geq f_0\}$ when we move down the level $f_0$ \citep{Stuezle03,klemela2004visualization}. 
Because cluster trees keep track of the connected components of level sets,
the shape of a cluster tree contains topological information about the underlying function $f$.
Moreover, a cluster tree can be displayed on a two-dimensional plane regardless of the dimension of $\mathcal{M}$,
which makes it an attractive approach for visualizing $f$.
Figure~\ref{fig::exTree} provides an example illustrating the construction of a cluster tree in a $1D$ Euclidean space.
In this paper, we focus on the case where $f \equiv f_P$ is some function of the underlying distribution $P$.
In this context, the cluster tree of $f$ reveals information about $P$.

\begin{figure}
\includegraphics[width=1.6in]{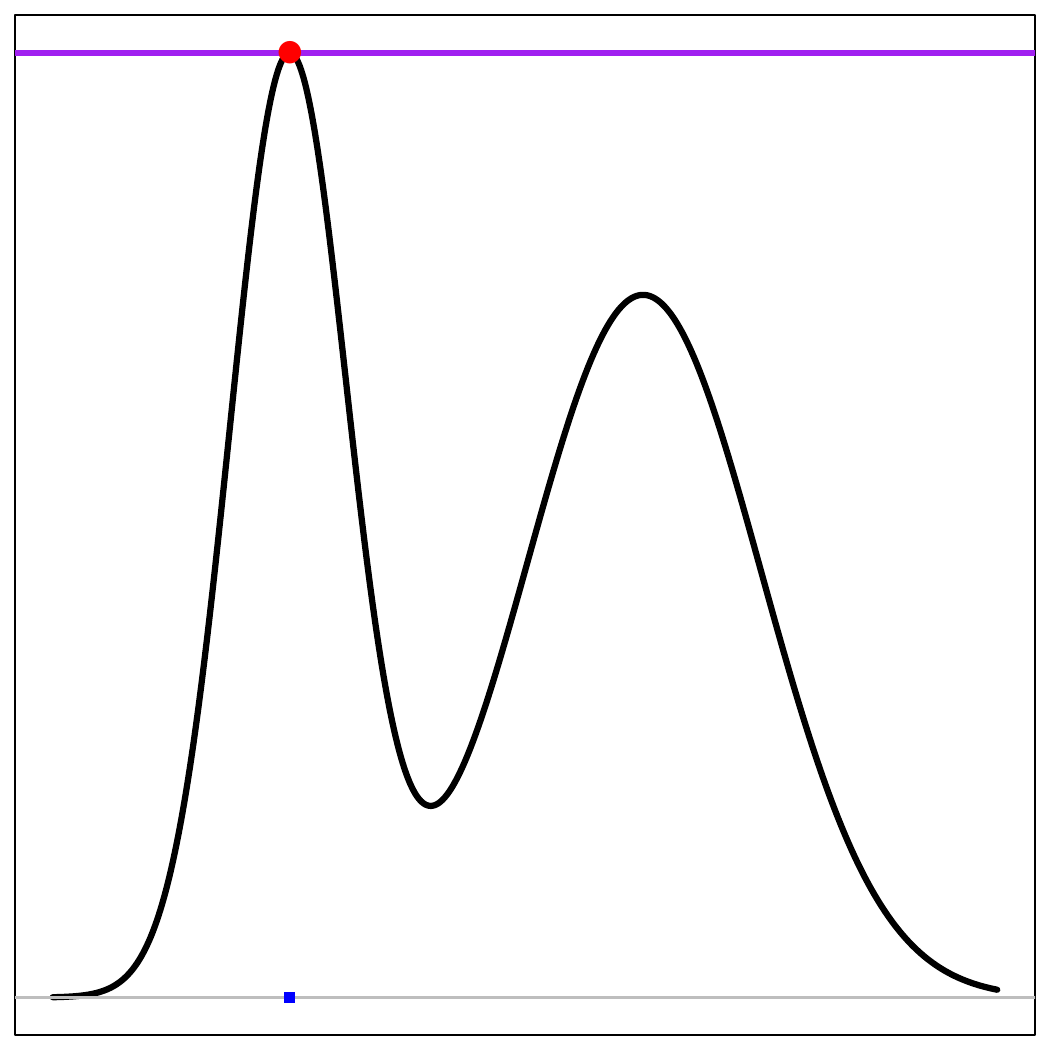}
\includegraphics[width=1.6in]{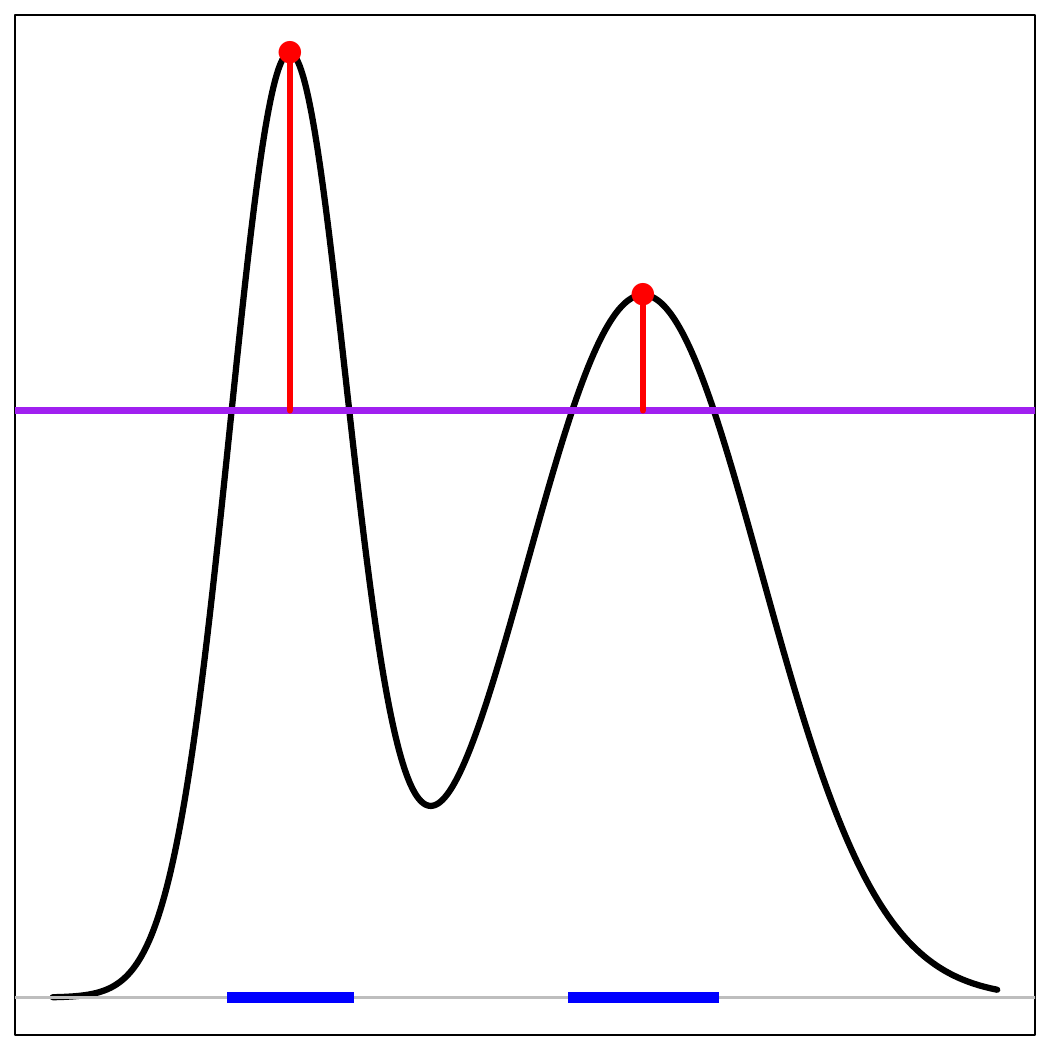}
\includegraphics[width=1.6in]{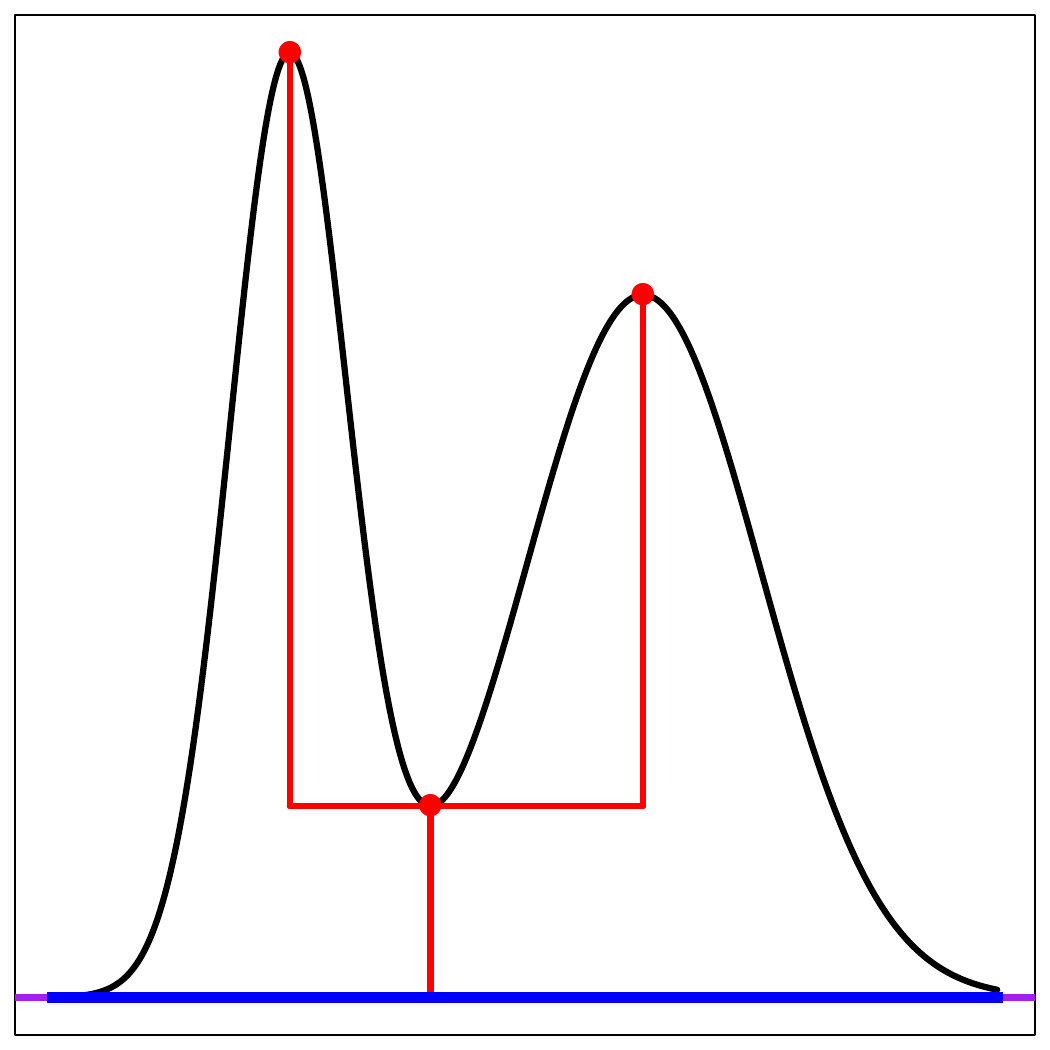}
\caption{
An example of constructing a cluster tree in $d=1$ case.
The purple horizontal lines indicates the level $f_0$ we are using in each panel. 
The blue region at the bottom indicates the corresponding level set $\{x: f(x)\geq f_0\}$. 
From left to right, we gradually decrease the level $f_0$ and use red lines to indicate
how the connected components evolve. 
The resulting red tree in the right panel is a cluster tree.
}
\label{fig::exTree}
\end{figure}

Most of cluster trees being studied in the literature
are the $\lambda$-tree of a distribution 
\citep{balakrishnan2012,chaudhuri2010rates,chaudhuri2014consistent,chen2016statistical,kpotufe2011pruning,Stuezle03}. 
The $\lambda$-tree of a distribution is the cluster tree of the density function $p$ of
that distribution.
In this case, the tree structure contains the topological information of $p$
and we can use the $\lambda$-tree to visualize a multivariate density function.
When we use an $\lambda$-tree for visualization purposes, it is also called a density tree
\citep{klemela2004visualization,klemela2006visualization,klemela2009smoothing}.

\cite{Kent2013} proposed a new type of cluster tree of a distribution--the 
$\alpha$-tree. The $\alpha$-tree uses the function $\alpha(x) = P(\{y: p(y)\leq p(x)\})$ to construct a cluster tree.
Note that such a function is also called \emph{density ranking} in our following paper \citep{chen2017measuring}
and it shows great potential in analyzing GPS datasets.
When the distribution is nonsingular and smooth, the $\alpha$-tree and the $\lambda$-tree
are topologically equivalent (Lemma~\ref{lem::tree}),
so they both provide similar topological information for the underlying distribution.
To estimate an $\alpha$-tree, we use the cluster tree of the function estimator $\hat{\alpha}_n(x) = \hat{P}_n(\{y: \hat{p}_n(y)\leq \hat{p}_n(x)\})$
where $\hat{P}_n$ is the empirical measure and $\hat{p}_n$ is the kernel density estimator (KDE).
Namely, 
we first use the KDE to estimate the density of each data point
and count the number of data points with a density below the density of that given point.

When a distribution is singular, the $\lambda$-tree is ill-defined because of the lack of a probability density function,
but the $\alpha$-tree is still well-defined under a mild modification.
For an illustrating example, see Figure~\ref{fig::Tree}.
These are
random samples from a distribution mixed with a point mass at $x=2$ with a probability of $0.3$
and a standard normal distribution with a probability of $0.7$. 
Thus, these samples are from a singular distribution.
We generate $n=5\times 10^3$ (left), $5\times 10^5$ (middle), and $5\times 10^7$ (right) data points
and estimate the density using the KDE with the smoothing bandwidth selected by the default rule in R.
The estimated density and $\lambda$-trees (red trees) are displayed in the top row.
It can be seen that when the sample size increases, $\lambda$-trees become degenerated. 
This is because there is no population $\lambda$-tree for this distribution. 
However, the $\alpha$-trees are stable in all three panels (see the bottom row of Figure~\ref{fig::Tree}).

\begin{figure}
\includegraphics[width=1.6in]{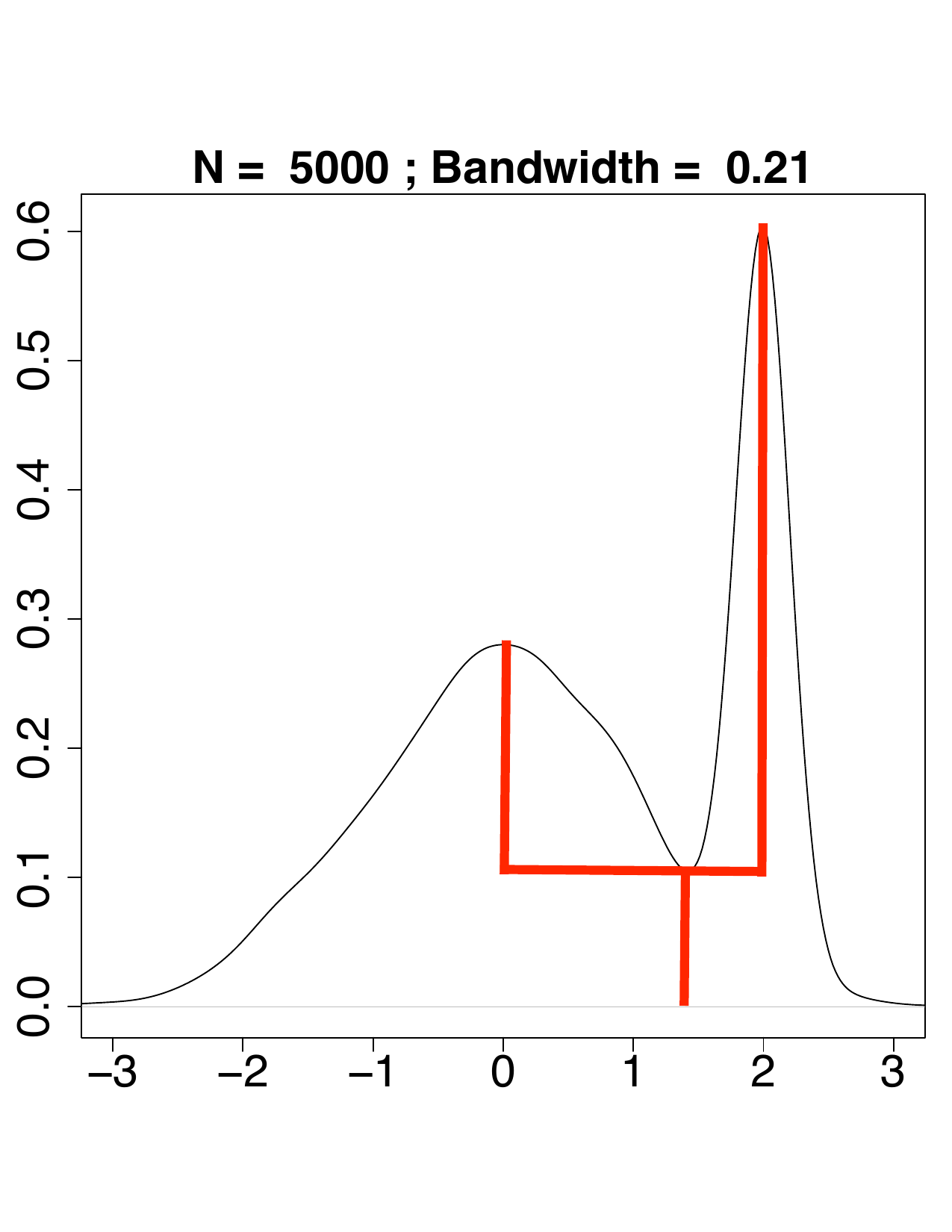}
\includegraphics[width=1.6in]{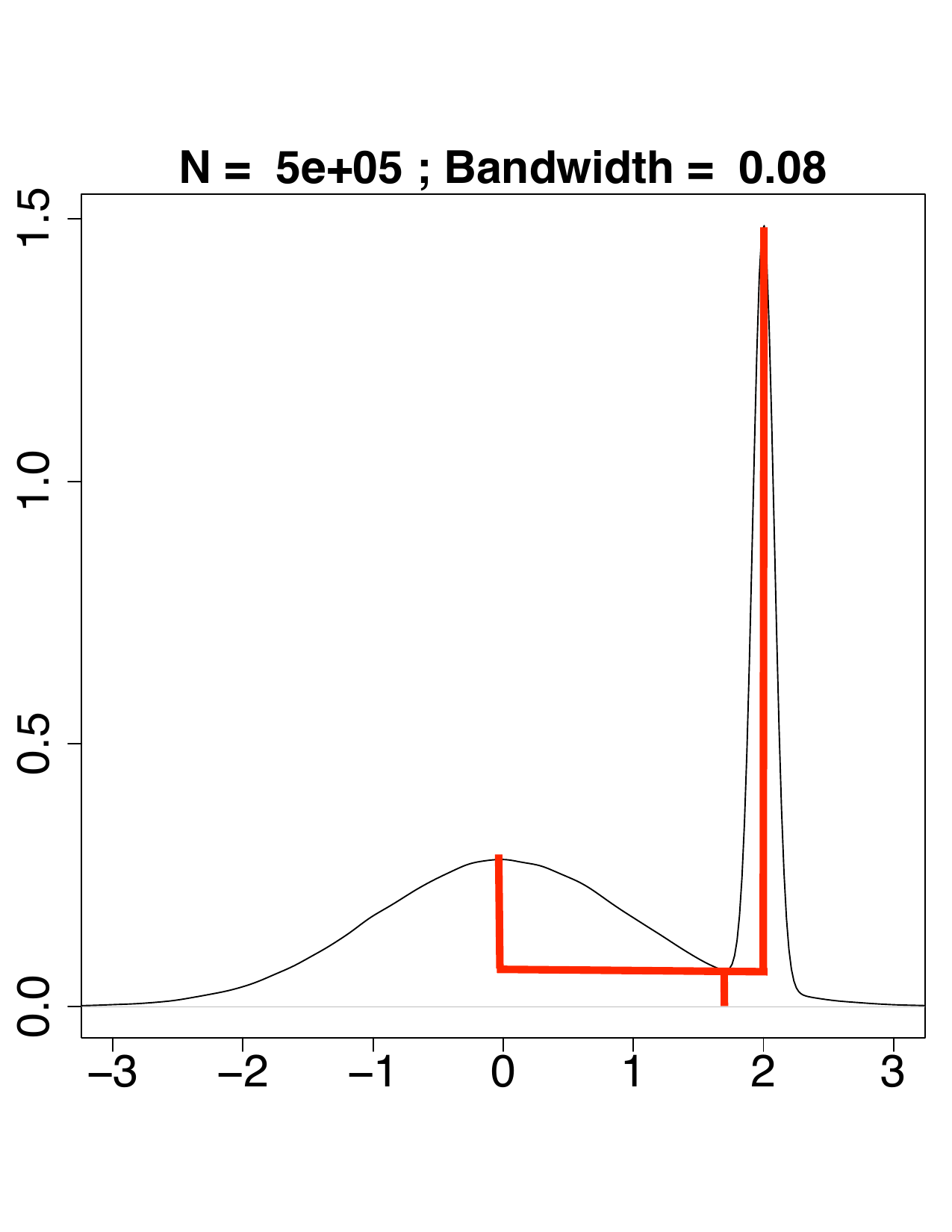}
\includegraphics[width=1.6in]{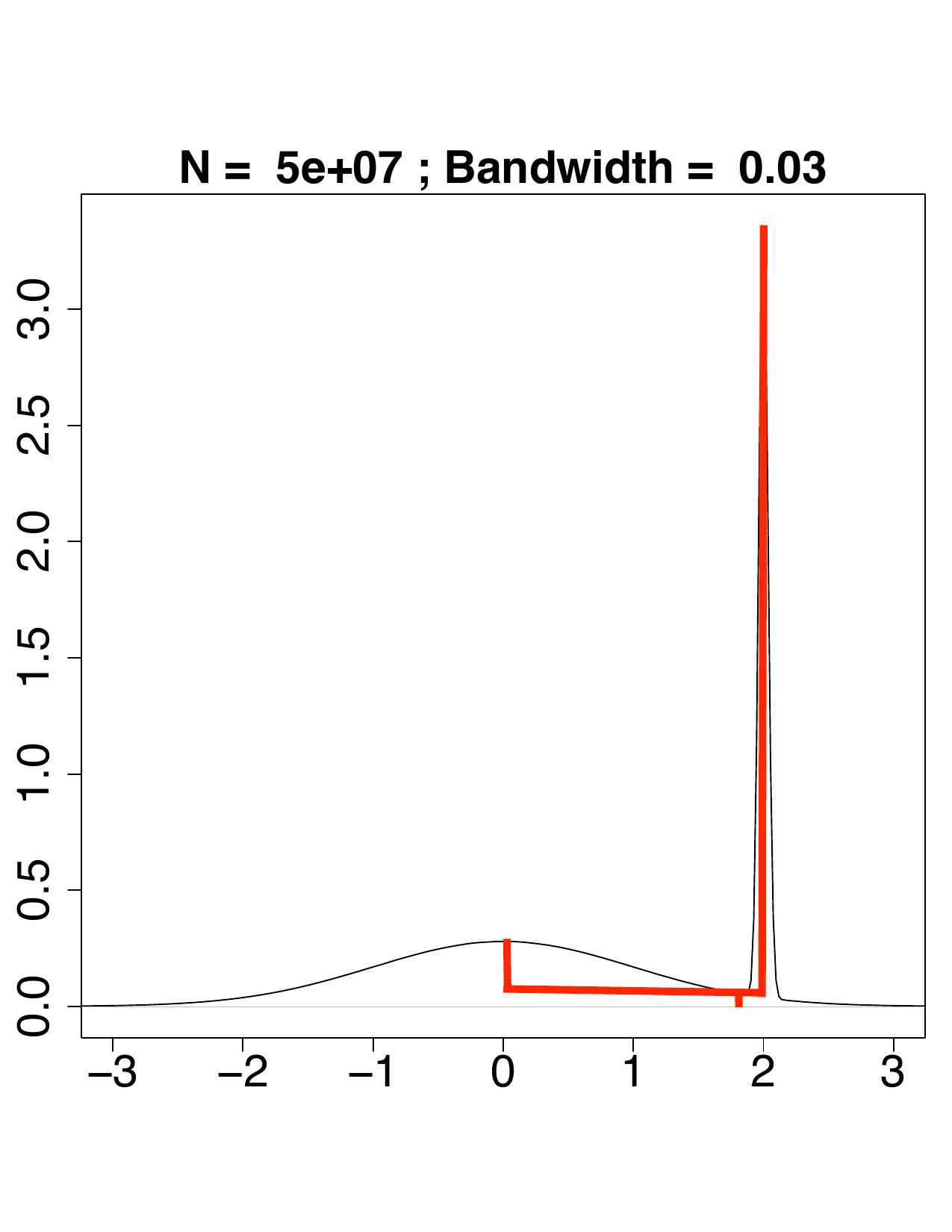}\\
\includegraphics[width=1.6in]{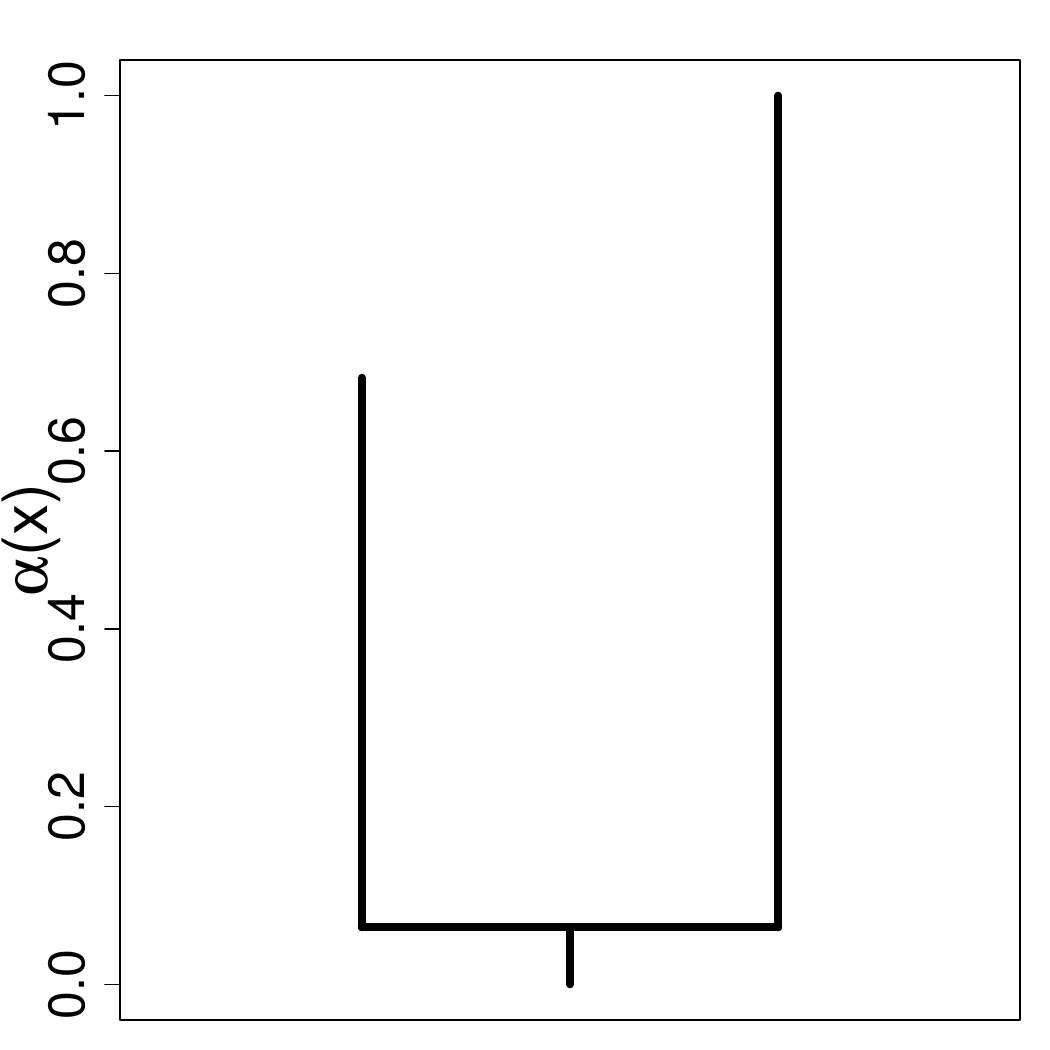}
\includegraphics[width=1.6in]{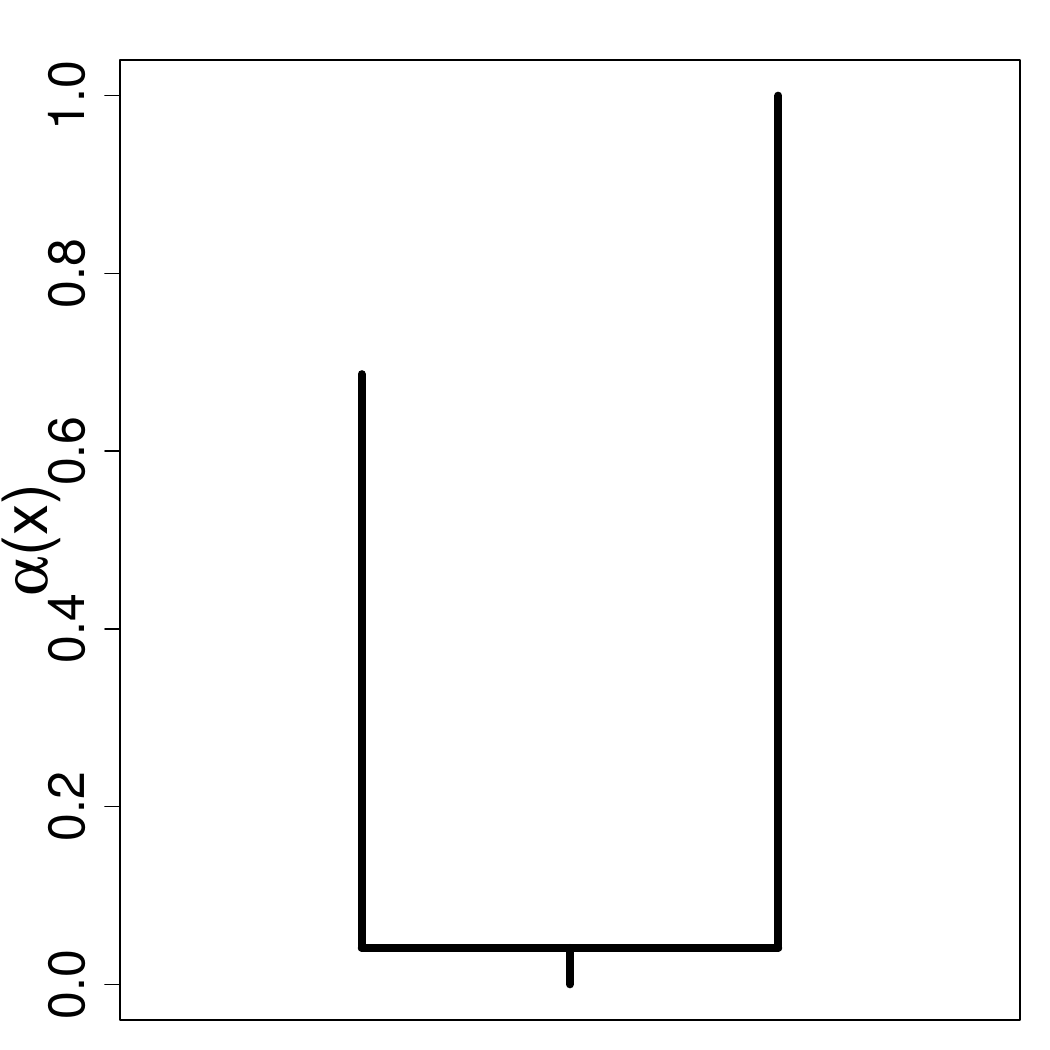}
\includegraphics[width=1.6in]{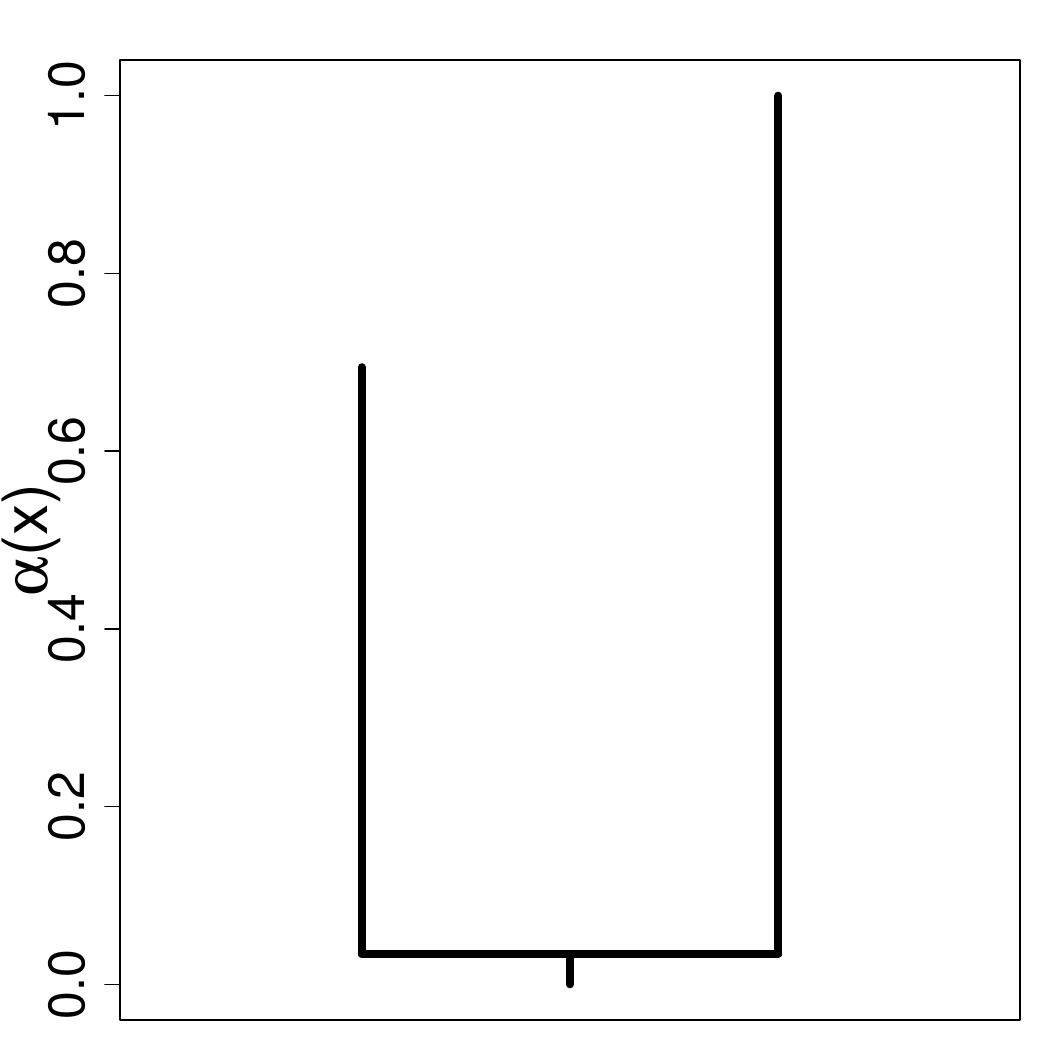}
\caption{
Example of the estimated $\lambda$-tree and $\alpha$-tree of a singular distribution.
This is a random sample from a singular distribution where with a probability of $0.3$, it puts a point mass at $x=2$,
and with a probability of $0.7$, we sample from a standard normal.
The top panel shows the density estimated by the KDE
and the red tree structure corresponds to the estimated $\lambda$-tree.
The bottom panel displays the estimated $\alpha$-tree.
From left to right, we increase the sample size from $5\times 10^3$, $5\times 10^5$, to $5\times 10^7$.
Because the distribution is singular, there is no population $\lambda$-tree so 
when the smoothing bandwidth decreases (when the sample size increases), the estimated $\lambda$-tree
is getting degenerated. 
On the other hand, the estimated $\alpha$-trees remain stable regardless of the smoothing bandwidth.
Note that every level set in a $\lambda$-tree corresponds to a level set in the $\alpha$-tree
but the value of the corresponding level ($Y$-axis) will be different. 
The function used in constructing a $\lambda$-tree may have an unbounded range when the sample size goes to infinity,
whereas the function for building an $\alpha$-tree always has a range of $[0,1]$. 
}
\label{fig::Tree}
\end{figure}

{\em Main Results.}
The main results of this paper are summarized as follows:
\begin{itemize}
\item When the distribution is nonsingular,
	\begin{itemize}
	\item[1.] We derive the convergence rate for the estimated level set
	when the level equals the density value of a critical point (Theorem~\ref{thm::critical}).
	\item[2.] We derived the convergence rate of $\hat{\alpha}_n$ (Theorem~\ref{thm::alpha_non}).
	\end{itemize}
\item When the distribution is singular,
	\begin{itemize}
	\item[3.] We propose a framework that generalizes $\alpha(x)$
	to define the $\alpha$-tree (Section~\ref{sec::singular}).
	\item[4.] We show that after rescaling, the KDE is pointwise, but 
	not uniformly, consistent (Theorem~\ref{thm::KDE}).
	\item[5.] We prove that $\hat{\alpha}_n$ is inconsistent under the $L_\infty$ metric (Corollary~\ref{cor::alpha_infty_s})
	but consistent under the $L_1$ and $L_1(P)$ distance (Theorem~\ref{thm::alpha::Pro}).
	\item[6.] We identify a new type of critical points, the dimensional critical points (DCPs), which also contribute
	to the change in cluster tree topology. We analyze their properties in Lemma~\ref{lem::DCP::p}, \ref{lem::DCP}, and \ref{lem::DCP_level}.
	\item[7.] We demonstrate that the estimated $\alpha$-tree $T_{\hat{\alpha}_n}$
	is topologically equivalent to the population $\alpha$-tree with probability exponentially converging to $1$ (Theorem~\ref{thm::top::p}).
	\end{itemize}
\end{itemize}

{\em Related Work.}
There is extensive literature on theoretical aspects of the $\lambda$-tree.
Notions of consistency are analyzed in \cite{Hartigan81,chaudhuri2010rates,chaudhuri2014consistent,eldridge2015beyond}.
The convergence rate and
the minimax theory are studied in \cite{chaudhuri2010rates,balakrishnan2012,chaudhuri2014consistent}.
\cite{chen2016statistical} study how to perform statistical inference for a $\lambda$-tree.
The cluster tree is also related to the topological data analysis \citep{carlsson2009topology,edelsbrunner2012persistent,wasserman2016topological}.
In particular, a cluster tree contains information about the zeroth order homology groups 
\citep{cohen2007stability,fasy2014confidence,bobrowski2017topological,bubenik2015statistical}. 
In our analysis, we generalize the Morse theory to a non-smooth and even discontinuous function.
\cite{baryshnikov2013min} also generalizes the Morse theory to a non-smooth function using the concept of
configuration space (the collection of $n$ points in a bounded area in $R^d$).
Note that their setting is different from us because we are working on a probability density function
whereas the function in \cite{baryshnikov2013min} is related to pairwise distance between points and distance to the boundary of certain area. 
The theory of estimating a cluster tree is closely related to the theory of estimating a level set;
an incomplete list of literature is as follows:
\cite{polonik1995measuring,Tsybakov1997,Walther1997,mason2009asymptotic,singh2009adaptive,Rinaldo2010,steinwart2011adaptive}.



{\em Outline.}
We begin with an introduction of cluster trees and the geometric concepts used in this paper in Section~\ref{sec::background}.
In Section~\ref{sec::nonsingular}, we derive the convergence rate for the $\alpha$-tree estimator under nonsingular measures.
In Section~\ref{sec::singular}, we study the behavior of the KDE
and the stability of the estimated $\alpha$-tree under singular measures.
In Section~\ref{sec::topology}, we investigate critical points of singular measures and derive the topological consistency
of the estimated $\alpha$-tree.
We summarize this paper and discuss possible future directions in Section~\ref{sec::discussion}.
We leave all proofs in the supplementary materials \citep{chen2016supp}.


\section{Backgrounds}	\label{sec::background}

\subsection{Cluster Trees}		\label{sec::cluster_tree}
Here we recall the definition of cluster trees in \cite{chen2016statistical}.
Let $\K\subset \R^d$ and $f:\K\mapsto [0,\infty)$ be a function with support $\K$.
The cluster tree of function $f$ is defined as follows.

\begin{definition}[Definition 1 in \cite{chen2016statistical}]
For any $f: \K \mapsto [0,\infty)$
we define
$T_f: \mathbb{R} \mapsto 2^{2^{\K}}$, where
$2^{\K}$ denotes the set of all subsets of $\K$,
$2^{2^{\K}}$ denotes the collection of all sets of subsets of $\K$,
and $T_f(\lambda)$ is the set of connected components
of the upper level set $\{x \in \K: f(x) \geq \lambda\}$.
We define 
the collection of connected components $\{T_f\}$, 
as $\{T_f\}=\underset{\lambda}{\bigcup}~T_f(\lambda)$. Thus,  $\{T_f\}$
is a collection of subsets of $\K$. 
We called $\{T_f\}$ the cluster tree of $f$.
\label{def::tree}
\end{definition}

Clearly, the cluster tree $\{T_f\}$ has a tree structure,
because for each pair $C_{1},C_{2}\in \{T_f\}$, either 
$C_{1}\subset C_{2}$, $C_{2}\subset C_{1}$, or $C_{1}\cap C_{2}=\phi$ holds.




\begin{figure}
\includegraphics[height=1.5in]{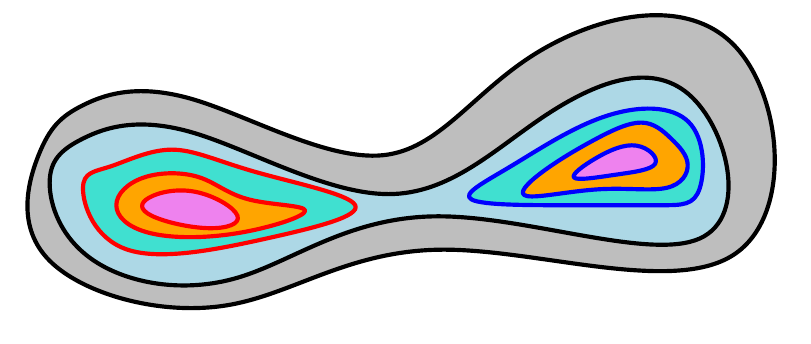}
\includegraphics[height=1.5in]{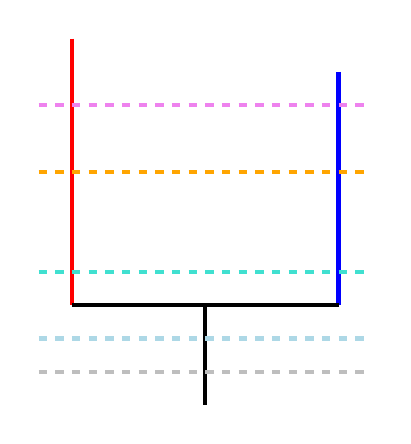}
\caption{
Connected components, edges, and edge set of a cluster tree.
Left: We display connected components of level sets under five different levels 
(indicated by the colors: magenta, yellow, sea green, sky blue, and gray).
The color of boundaries of each connected component denotes the edge they correspond to.
Right: The cluster tree. We color the three edges (vertical lines) red, blue, and black.
The edge set $E(T_f) = \{\mathbb{C}_{\sf red}, \mathbb{C}_{\sf blue}, \mathbb{C}_{\sf black}\}$
and we have the ordering $\mathbb{C}_{\sf red}\leq \mathbb{C}_{\sf black}$ and $\mathbb{C}_{\sf blue}\leq \mathbb{C}_{\sf black}$.
Note that the solid black horizontal line is not an edge set; it is a visual 
representation of connecting the blue and red edges to the black edge.
The horizontal dashed lines indicate the five levels corresponding to the left panel.
In the left panel, the three connected components with red boundaries are elements of the edge $\mathbb{C}_{\sf red}.$
}
\label{fig::equiv}
\end{figure}

To get a geometric understanding of the cluster tree in Definition
\ref{def::tree}, we identify edges that constitute the cluster
tree. Intuitively, edges correspond to either leaves or internal
branches. An edge is roughly defined as a set of clusters whose
inclusion relation with respect to clusters outside an edge is
equivalent. So when the collection of connected components is
divided into edges, we observe the same inclusion relation between
representative clusters whenever any cluster is selected as
representative for each edge.

To formally define edges, we define an interval in the cluster
tree, and the equivalence relation in the cluster tree. For any two
clusters $A,B\in\{T_f\}$, the interval $[A,B]\subset\{T_f\}$ is
defined as a set clusters that contain $A$ and are contained in $B$,
i.e.
\[
[A,B]:=\left\{ C\in\{T_f\}:\,A\subset C\subset B\right\} ,
\]
We define the equivalence relation $\sim$ such that
$A\sim B$ if and only if
\begin{align*}
\forall C \in \{T_f\} \text{ such that }&C\notin[A,B]\cup[B,A],\\
&C\subset A \Leftrightarrow C\subset B, \quad A\subset C \Leftrightarrow B\subset C.
\end{align*}
It is easy to see that the relation $\sim$ is reflexive ($A\sim
A$), symmetric $(A\sim B$ implies $B\sim A$), and transitive ($A\sim B$
and $B\sim C$ implies $A\sim C$). Hence the relation $\sim$ is indeed
an equivalence relation, and we can consider the set of equivalence
classes $\{T_f\}/_{\sim}$. We define the edge set (the collection of edges) $E(T_{f})$ as
$E(T_{f}):=\{T_f\}/_{\sim}$.
Each element in the edge set $\mathbb{C}\in E(T_f)$ is called an edge, 
which contains many nested connected components of the cluster tree $\{T_f\}$ (i.e. if $C_1,C_2\in\mathbb{C}$, then 
either $C_1\subset C_2$ or $C_2\subset C_1$).
Note that every element in an edge corresponds to a connected component of an upper level set of function $f$.

To associate the edge set $E(T_{f})$ to a tree structure,
we define a partial order on the edge set as follows:
let $\mathbb{C}_1,\mathbb{C}_2\in E(T_f)$ be two edges, we write
$\mathbb{C}_1\leq \mathbb{C}_2$ if and only if $A\subset B$ for all $A\in \mathbb{C}_1$ and $B\in \mathbb{C}_2$. 
Then the topology of the cluster tree (the shape of the cluster tree) is completely determined by the
edge set $E(T_f)$ and the partial order among the edge set.
Figure~\ref{fig::equiv} provides an example of the connected components, edges, and edge set of a cluster tree
along with a tree representation.

Based on the above definitions,
we define the topological equivalence between two cluster trees.
\begin{definition}
For two functions $f: \K \mapsto [0,\infty)$ and $g: \K \mapsto [0,\infty)$,
we say $T_f$ and $T_g$ are topological equivalent, denoted as $T_f\overset{T}{\approx} T_g$, if
there exists a bijective mapping $S: E(T_{f})\mapsto E(T_{g})$ such that for any $\mathbb{C}_1, \mathbb{C}_2\in E(T_f)$,
$$
\mathbb{C}_1 \leq \mathbb{C}_2 \Longleftrightarrow S(\mathbb{C}_1) \leq S(\mathbb{C}_2).
$$
\label{def::topology}
\end{definition}

For each $\mathbb{C}\in E(T_f)$, we define
$$
U(\mathbb{C}) = \sup\{\lambda: \exists C\in T_f(\lambda), C\in \mathbb{C}\} 
$$
to be the maximal level of an edge $\mathbb{C}$.
We define the \emph{critical tree-levels} of function $f$ as 
\begin{equation}
\mathcal{A}_f = \{U(\mathbb{C}): \mathbb{C}\in E(T_f)\}.
\label{eq::A_f}
\end{equation}
It is easy to see that $\mathcal{A}_f$ is the collection of levels of $f$ where 
the creation of a new connected component or the merging of two connected components occurs.

In most of the cluster tree literature \citep{balakrishnan2012,chaudhuri2010rates,chaudhuri2014consistent,chen2016statistical,eldridge2015beyond}, 
the cluster tree is referred to as the $\lambda$-tree,
which uses the probability density function $p$ to build a cluster tree.
Namely, the $\lambda$-tree is $T_p$.

In this paper, we focus on the $\alpha$-tree \citep{Kent2013}
that uses the function
\begin{equation}
\alpha(x) = P(\{y:p(y)\leq p(x)\}) =  1- P(\{y: p(y)> p(x)\}) = 1- P(L_{p(x)})
\label{eq::alpha}
\end{equation}
to build the cluster tree $T_\alpha$ ($T_\alpha$ is called the \emph{$\alpha$-tree}).
The function $\alpha(x)$ is also called \emph{density ranking} in \cite{chen2017measuring}. 
The set $L_\lambda = \{x: p(x)\geq \lambda\}$ is the upper level set of $p$ 
(note that $P(\{y: p(y)> p(x)\}) = P(\{y: p(y)\geq p(x)\})$ when the density function $p$ is bounded). 
A feature of the $\alpha$-tree is that the function $\alpha(x)$ depends only on the ordering of points within $\K$.
Namely, any function that assigns the same ordering of points within $\K$ as the density function $p$ 
can be used to construct the function $\alpha(x)$.
Specifically, we write
\begin{align*}
x_1\succ_p x_2 &\Leftrightarrow p(x_1)>p(x_2),\\ 
x_1\prec_p x_2 &\Leftrightarrow p(x_1)<p(x_2),\\ 
x_1\simeq_p x_2 &\Leftrightarrow p(x_1)=p(x_2).
\end{align*}
Then 
\begin{equation}
\alpha(x) = P(\{y: p(y)\leq p(x)\}) = P(\{y: y\preceq_p x\}),
\label{eq::alpha0}
\end{equation}
where $y\preceq_p x$ means either $y\prec_p x$ or $y\simeq_p x$.
Note that if we replace the function $p(x)$ by $2p(x)$ or $\log p(x)$, the ordering remains the same
(i.e., $x\succ_p y\Leftrightarrow x \succ_{2p} y\Leftrightarrow x \succ_{\log p} y$).
We will use this feature later to generalize equation \eqref{eq::alpha} to singular measures. 

One feature of the $\alpha$-tree is that it is topological equivalent to the $\lambda$-tree.
\begin{lem}
Assume the distribution $P$ has a bounded density function $p$ and $p$ is a Morse function
with a compact support.
Then the $\lambda$-tree and $\alpha$-tree are topological equivalent.
Namely,
$$
T_p \overset{T}{\approx} T_\alpha .
$$
\label{lem::tree}
\end{lem}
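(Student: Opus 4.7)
The plan is to exploit the fact that $\alpha$ is a monotone function of $p$, so that the two functions share essentially the same collection of upper level sets and hence generate identical cluster tree structures.

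First, I would observe that the value $\alpha(x)$ depends on $x$ only through $p(x)$. Writing $F(\lambda) := P(\{y: p(y) \leq \lambda\})$, we have $\alpha(x) = F(p(x))$, and $F:[0,\infty)\to[0,1]$ is non-decreasing and right-continuous. In particular $p(x_1)\geq p(x_2)$ immediately yields $\alpha(x_1)\geq \alpha(x_2)$; this is the content of equation (\ref{eq::alpha0}), restated here as the observation that $\alpha$ preserves the ordering induced by $p$.

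Second, I would use this monotonicity to identify the two families of upper level sets. Let $L_\lambda = \{x: p(x)\geq \lambda\}$ and $M_a = \{x: \alpha(x)\geq a\}$. Using the generalized inverse $F^{-1}(a) := \inf\{\lambda: F(\lambda)\geq a\}$, the identity $\alpha = F\circ p$ gives $M_a = L_{F^{-1}(a)}$, so every upper level set of $\alpha$ is an upper level set of $p$. Conversely, given $\lambda$ in the range of $p$, setting $a = F(\lambda)$ recovers $L_\lambda = M_a$. Hence
$$
\{L_\lambda: \lambda\geq 0\} \;=\; \{M_a: a\geq 0\}
$$
as collections of subsets of $\K$, which immediately forces $\{T_p\} = \{T_\alpha\}$ because each side is the set of connected components taken over the same family of sets.

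Third, I would conclude topological equivalence. The equivalence relation $\sim$, the edge set $E(T_f)$, and the partial order on $E(T_f)$ are all defined purely in terms of inclusions among members of $\{T_f\}$. Since $\{T_p\} = \{T_\alpha\}$, the identity map $S:E(T_p)\to E(T_\alpha)$ is a bijection that preserves the partial order, so Definition \ref{def::topology} yields $T_p \overset{T}{\approx} T_\alpha$.

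The main obstacle is the fine point in Step 2: at levels where $F$ has a plateau, or where $p$ takes a common value on a Lebesgue-measure-zero but nonempty set, the strict and non-strict upper level sets of $p$ may differ, so the map $a \mapsto L_{F^{-1}(a)}$ is not literally a bijection. I would handle this by showing that the nonsingularity of $P$ forces any discrepancy between $\{x: p(x)>\lambda\}$ and $\{x: p(x)\geq\lambda\}$ at such levels to be a $P$-null, Lebesgue-null set that changes neither the collection of connected components nor their inclusion relations, so the identification $\{T_p\} = \{T_\alpha\}$ still goes through.
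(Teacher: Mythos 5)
Your proof is correct and follows exactly the route the paper intends: the paper in fact omits the proof entirely, remarking only that $\alpha(x)$ is a monotonic transformation of $p(x)$ so the topology is preserved, and your argument is precisely a rigorous version of that remark via the identification of the two families of upper level sets. The fine point you flag at the end (plateaus of $F$, strict versus non-strict level sets) essentially does not arise under the paper's standing assumptions, since for a continuous density on a compact support the map $\lambda \mapsto P(\{y: p(y)\leq \lambda\})$ is strictly increasing on the interior of the range of $p$, so your identification $\{T_p\}=\{T_\alpha\}$ goes through without needing the null-set argument.
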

The proof of this lemma follows from the argument
at the beginning of Section 4.1 of \cite{cadre2009clustering}
so we ignore the proof.
The main idea is that
by equation \eqref{eq::alpha} and the fact that $p$ is Morse, 
$\alpha$ is a strictly monotonic transformation of the density $p$ so
the topology is preserved.

When we use the $\alpha$-tree, the induced upper level set 
$$
\mathbb{A}_\varpi = \left\{x: \alpha(x)\geq \varpi\right\}
$$
is called an \emph{$\alpha$-level set}.


\begin{remark}[$\kappa$-tree]
\cite{Kent2013} also proposed another cluster tree--the $\kappa$-tree--which 
uses the probability content within each edge set defined by an $\alpha$-tree (or a $\lambda$-tree)
to compute the function $\kappa(x)$.
Because it is just a rescaling from the $\alpha$-tree,
the theory of $\alpha$-tree also works for the $\kappa$-tree.
For simplicity, we only study the theory of the $\alpha$-tree in this paper.
\end{remark}

\subsection{Singular Measure}	\label{sec::singular}
When the probability measure is singular, the $\lambda$-tree is no longer well-defined
because there is no density function.
However, 
the $\alpha$-tree
can still be defined.

A key feature for constructing the $\alpha$-tree is the ordering. 
Here we will use a generalized density function, the Hausdorff density \citep{preiss1987geometry,mattila1999geometry}, 
to define the $\alpha$-tree
under singular measures.
Given a probability measure $P$, the $s$-density ($s$ dimensional Hausdorff density) is
$$
\mathcal{H}_s (x) = \lim_{r\rightarrow 0}\frac{P(B(x,r))}{C_s\cdot r^s}
$$
provided the limit exists.
Note that $B(x,r) = \{y: \|y-x\|\leq r\}$ and $C_s$ is the $s$-dimensional volume of an $s$-dimensional unit ball for $s\geq 1$ and $C_0=1$.

For a given point $x$, we define the notion of generalized density using two quantities $\tau(x)$ and $\rho(x)$:
\begin{align*}
\tau(x) & = \max \{s\leq d: \mathcal{H}_s(x)<\infty\}\\
\rho(x) & = \mathcal{H}_{\tau(x)}(x).
\end{align*}
Namely, $\tau(x)$ is the `dimension' of the probability measure at $x$ and $\rho(x)$ 
is the corresponding Hausdorff density at that dimension.
Note that the function $\rho(x)$ is well-defined for every $x$. 
For any two points $x_1,x_2\in\K$, we define an ordering 
such that $x_1\succ_{\tau,\rho} x_2$ if 
$$
\tau(x_1)<\tau(x_2), \qquad \mbox{or}\quad \tau(x_1)=\tau(x_2), \quad \rho(x_1)>\rho(x_2).
$$
That is, for any pair of points,
we first compare their dimensions $\tau(x)$.
The point with the lower dimensional value $\tau$ will be ranked higher than the other point.
If two points have the same dimension, then we compare their
corresponding Hausdorff density.
When the distribution is nonsingular, $\tau(x)=d$ for every $x\in\K$ 
and $\rho(x) = p(x)$ is the usual density function. 
Thus, the ordering is determined by the density function $p(x)$.

To define the $\alpha$-tree, 
we use equation \eqref{eq::alpha0}:
\begin{equation}
\alpha(x) = P\left(\{y: y\preceq_{\tau,\rho} x\}\right). 
\label{eq::alpha1}
\end{equation}
Namely, $\alpha(x)$ is the probability content of regions 
of points whose ordering is lower than or equal to $x$.
As shown in equation \eqref{eq::alpha0}, equation \eqref{eq::alpha1} is the same as equation \eqref{eq::alpha} when $P$ is nonsingular.
Note that by equation \eqref{eq::alpha1}, 
the $\alpha$-level set $\mathbb{A}_\varpi = \left\{x: \alpha(x)\geq \varpi\right\}$ is 
well-defined in a singular measure.


\subsection{Geometric Concepts} \label{sec::geometrics}

We first define some notations for sets.
For a set $A$, define $\overline{A}$ to be the closure of $A$, $\mathring{A}$ to be the
interior of $A$,
$\partial A $ to be the boundary of set $A$,
and $A^C= \K\backslash A$ to be the complement of set $A$ restricted to the support $\K$.
When $A$ is a manifold, $\partial A $ and $\mathring{A}$ will be the boundary and interior of the manifold respectively. 
We define $A\triangle B = (A\backslash B)\cup (B\backslash A)$ to be the symmetric difference for sets $A$ and $B$


Based on the definition of $\tau(x)$, we decompose the support $\K$ into
\begin{equation}
\K = \K_d\bigcup \K_{d-1}\bigcup \cdots \bigcup \K_0,
\label{eq::support}
\end{equation}
where $\K_s = \{x: \tau(x)=s\}$.
Thus, $\{\K_0,\cdots,\K_d\}$ forms a partition of the entire support $\K$.
We call each $\K_s$ an $s$-dimensional support.
When we analyze the support $\K_s$,
any $\K_{s'}$ with $s'>s$ is called a higher dimensional support (with respect to $\K_s$) and
$s'<s$ will be called a lower dimensional support.

\begin{figure}
\center
\includegraphics[height=2in]{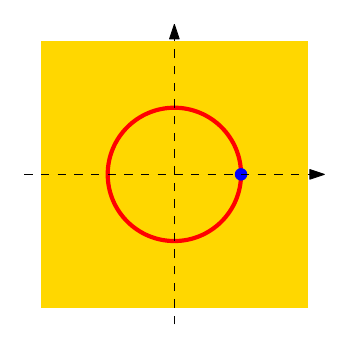}
\caption{
The supports $\K_2,\K_1,$ and $\K_0$ for the bivariate random variable in Example \ref{ex::ex02}.
The yellow region is $\K_2 = [-1,1]^2 \backslash  R_{0.5}$, where $R_{0.5} = \{(x,y): x^2+y^2 = 0.5^2\}$.
The red ring is $\K_1 = R_{0.5}\backslash \{(0.5,0)\}$.
The blue dot is the location of $(0.5,0)$.
}
\label{fig::ex02}
\end{figure}

\begin{example}
Let $X$ be a bivariate random variable such that (i) with a probability of $0.7$, it is from a uniform distribution on $[-1,1]^2$,
(ii) with a probability of $0.25$, it is from a uniform distribution on the ring $R_{0.5} = \{(x,y): x^2+y^2 = 0.5^2\}$,
(iii) with a probability of $0.05$, it is equal to $(0.5,0)$. 
Apparently, the distribution of $X$ is singular and the support $\K = [-1,1]^2$.
In this case, $\K_2 = [-1,1]^2 \backslash  R_{0.5}$, $\K_1 = R_{0.5}\backslash \{(0.5,0)\}$, and $\K_0 = \{(0.5,0)\}$.
Figure~\ref{fig::ex02} shows these supports under different colors.
The yellow rectangular region is $\K_2$, the red ring area is $\K_1$, and the blue dot denotes the location of $\K_0$.
\label{ex::ex02}
\end{example}


To regularize the behavior of $\rho(x)$ on each support $\K_s$, 
we assume that the closure of the support, 
$\overline{\K_s}$,
is an $s$-dimensional smooth manifold 
(properties of a smooth manifold can be found in 
\citealt{lee2012introduction,tu2010introduction}).

For an $s$-dimensional smooth manifold $\mathcal{M}$, 
the tangent space on each point of $\mathcal{M}$ changes smoothly \citep{tu2010introduction, lee2012introduction}. 
Namely, for $x\in\mathcal{M}$, 
we can find an orthonormal basis $\{v_1(x),\cdots, v_s(x): v_\ell(x)\in\R^d, \ell=1,\cdots, s\}$ such that
$\{v_1(x),\cdots, v_s(x)\}$ spans
the tangent space of $\mathcal{M}$ at $x$ 
and each $v_\ell(x)$ is a smooth (multivalued) function on $\mathcal{M}_s$.
For simplicity, for $x\in \K_s$, we denote $T_s(x)$ as the tangent space of $\K_s$ at $x$,
$N_s(x)$ as the normal space of $\K_s$ at $x$,
and $\nabla_{T_s(x)}$ to be taking the derivative in the tangent space.

For a function $f:\mathcal{M}\mapsto\R$ defined on a smooth manifold $\mathcal{M}$,
the function $f$ is a \emph{Morse function} \citep{milnor1963morse,morse1925relations,morse1930foundations} 
if all its critical points are non-degenerate. 
Namely, the eigenvalues of the Hessian matrix of $f$ at each critical point are 
non-zero.
When the function $f$ is a Morse function, its $\lambda$-tree is stable \citep{chazal2014robust,chen2016statistical}
in the $L_\infty$ metric under a small perturbation of $f$.

To link the concept of the Morse function to the Hausdorff density $\rho(x)$, 
we introduce a generalized density
$$
\rho^\dagger_s:\overline{\K_s} \mapsto [0,\infty)
$$
such that $\rho^\dagger_s(x) = \lim_{x_n\in \K_s: x_n\rightarrow x} \rho(x_n)$
provided the limit exists and does not depend on the choice of the sequence $x_n$.
It is easy to see that $\rho^\dagger_s(x) = \rho(x)$ when $x\in\K_s$ 
but now it is defined on a smooth manifold $\overline{\K_s}$.
We say $\rho(x)$ is a \emph{generalized Morse function} if 
the corresponding $\rho^\dagger_s(x)$ is a Morse function for $s=1,\cdots, d$.
Later we will show that this generalization leads to a stable $\alpha$-tree for a singular measure.

For $\overline{\K_s}$,
let $\mathcal{C}_s =\{x\in\overline{\K_s}: \nabla_{T_s(x)}\rho^\dagger_s(x)=0\}$ be the collection of its critical points.
Then the fact that $\rho^\dagger_s(x)$ is a Morse function 
implies that the eigenvalues of the Hessian matrix $\nabla_{T_s(c)}\nabla_{T_s(c)}\rho^\dagger_s(c)$ are non-zero
for every $c\in\mathcal{C}_s$.
We call $g_s(x)=\nabla_{T_s(x)}\rho^\dagger_s(x)$ the generalized gradient and
$H_s(x)=\nabla_{T_s(x)}\nabla_{T_s(x)}\rho^\dagger_s(x)$ the generalized Hessian.
For the case $s=0$ (point mass), we define $ \mathcal{C}_0=\K_0 $.
The collection
$\mathcal{C} = \bigcup_{s=1,\cdots,d} \mathcal{C}_s$ 
is called the collection of\emph{ generalized critical points} of $\rho(x)$.
Each element $c\in\mathcal{C}$ is called a generalized critical point.


Finally, we introduce the concept of \emph{reach} \citep{Federer1959,chen2015density} for a smooth manifold $\mathcal{M}$.
The reach of $\mathcal{M}$ is defined as 
$$
{\sf reach}(\mathcal{M}) = \sup\{r\geq0: \mbox{every point in }\mathcal{M}\oplus r\mbox{ has a unique projection onto $\mathcal{M}$}\},
$$
where $A\oplus r = \{x: d(x,A)\leq r\}$.
One can view reach as the radius of the largest ball that can roll freely outside $\mathcal{M}$. 
More details about reach can be found in \cite{Federer1959,chen2015density}.
Reach plays a key role in the stability of a level set; see \cite{chen2015density} for more details.

\begin{remark}
\label{rm::ss}
If we further assume that the supports satisfy
\begin{equation}
\K\supset\overline{\K}_d\supset \overline{\K}_{d-1}\supset \cdots \supset \overline{\K}_0,
\label{eq::lowD}
\end{equation}
then the support $\K$ forms a stratified space
\citep{goresky1980intersection,goresky2012stratified}.
Roughly speaking, a stratified space is a topological space $\mathbb{W}$ such that
there exists a decomposition (called stratification) $\mathbb{W}_{0},\cdots, \mathbb{W}_d$ of $\mathbb{W}$
with the properties that (i) each $\mathbb{W}_k$ is a $k$-dimensional smooth manifold,
(ii) $\mathbb{W} = \bigcup_{\omega=0}^d \mathbb{W}_{\omega}$, and (iii) for any $k\leq \ell$,
$$
\mathbb{W}_k\cap \overline{\mathbb{W}}_\ell \neq \emptyset \Leftrightarrow \mathbb{W}_k\subset \overline{\mathbb{W}}_\ell.
$$
From the properties of a stratified space, one can see how equation \eqref{eq::lowD} is related to a stratified space. 
Note that for a stratified space, if we consider a probability measure that is a mixture of probability measures 
defined on each stratum ($\mathbb{W}_k$), this defines a singular measure as the one being considered in this paper. 
The topology of a stratified space can be defined using
the intersection homology \citep{goresky1980intersection,edelsbrunner2008persistent,friedman2014singular}. 
The notion of intersection homology and stratified space will
be particularly useful if we want to work on higher-order homology groups.
\end{remark}

\subsection{Estimating the $\alpha$ function and the $\alpha$-tree}
In this paper, we focus on estimating $\alpha$-trees via
the KDE:
$$
\hat{p}_n(x) = \frac{1}{nh^d}\sum_{i=1}^nK\left(\frac{\|x-X_i\|}{h}\right).
$$
Specifically, we first estimate the density by $\hat{p}_n$ and then
construct the estimator $\hat{\alpha}_n$:
\begin{equation}
\hat{\alpha}_n(x) = \hat{P}_n\left(\{y: \hat{p}_n(y)\leq \hat{p}_n(x)\}\right) 
\label{eq::alpha_estimate}
\end{equation}
where $\hat{P}_n$ is the empirical measure and $\hat{L}_\lambda = \{x: \hat{p}_n(x)\geq \lambda\}$.
Note that when $x$ does not contain any point mass of $P$, 
$\hat{\alpha}_n(x)= 1-\hat{P}_n\left(\hat{L}_{\hat{p}_n(x)}\right).$

To quantify the uncertainty in the estimator $\hat{\alpha}_n$, we consider three error measurements.
The first error measurement is 
the \emph{$L_\infty$ error}, which is defined as
$$
\|\hat{\alpha}_n-\alpha\|_\infty=\sup_x|\hat{\alpha}_n(x)-\alpha(x)|.
$$
The $L_\infty$ error has been used in several cluster tree literature; see, e.g., \cite{eldridge2015beyond,chen2016statistical}.
An appealing feature of the $L_\infty$ error is that this quantity
is the same (up to some constant) as some other tree error metrics such as the merge distortion metric \citep{eldridge2015beyond}.
Convergence in the merge distortion metric implies the Hartigan consistency \citep{eldridge2015beyond}, 
a notion of consistency of a cluster tree estimator described in \cite{Hartigan81,chaudhuri2010rates,chaudhuri2014consistent}.
Thus, because of the equivalence between the $L_\infty$ error and the merge distortion metric, convergence in $L_\infty$ implies 
the Hartigan consistency of an estimated cluster tree.

The other two errors are the integrated error and the probability error (probability-weighted integrated error). 
Both are common error measurements for evaluating the quality of a function estimator \citep{wasserman2006all,scott2015multivariate}.
The \emph{integrated error} is 
$$
\|\hat{\alpha}_n-\alpha\|_{\mu} = \int |\hat{\alpha}_n(x)-\alpha(x)| dx,
$$
which is also known as the integrated distance or $L_1$ distance. 
The \emph{probability error (probability-weighted integrated error)} is
$$
\|\hat{\alpha}_n-\alpha\|_{P}=\int |\hat{\alpha}_n(x)-\alpha(x)| dP(x),
$$
which
is the integrated distance weighted by the probability measure, which is also known
as $L_1(P)$ distance.
The integrated error and the probability error 
are more robust than the $L_\infty$ error--a large difference in a small region will not 
affect on these errors much.

To quantify the uncertainty in the topology of $\alpha$-tree,
we introduce the notion of \emph{topological error}, which is defined as
$$
P\left(T_{\hat{\alpha}_n}\not\overset{T}{\approx} T_\alpha\right) = 1- P\left(T_{\hat{\alpha}_n}\overset{T}{\approx} T_\alpha\right). 
$$
Namely, the topological error is the probability that the estimated $\alpha$-tree
is not topological equivalent to the population $\alpha$-tree.


Finally, we define the following notations.
For a smooth function $p$, we define $\|p\|_{\ell,\infty}$ as the supremum maximal norm of $\ell$-th derivative of $p$.
For instance,
$$
\|p\|_{0,\infty}=\sup_{x\in\K}p(x),\quad\|p\|_{1,\infty}=\sup_{x\in\K}\|g(x)\|_{\max},\quad\|p\|_{2,\infty}=\sup_{x\in\K}\|H(x)\|_{\max},
$$
where $g(x) = \nabla p(x)$ and $H(x) = \nabla \nabla p(x)$ are the gradient and Hessian matrix of $p(x)$, respectively.
A vector $\beta =
(\beta_1,\ldots,\beta_d)$ of non-negative integers is called a
multi-index with $|\beta| = \beta_1 + \beta_2 + \cdots + \beta_d$
and the corresponding derivative operator is
\begin{equation}
D^\beta = \frac{\partial^{\beta_1}}{\partial x_1^{\beta_1}} \cdots 
\frac{\partial^{\beta_d}}{\partial x_d^{\beta_d}},
\label{eq::d1}
\end{equation}
where $D^\beta f$ is often written as $f^{(\beta)}$.

\section{Theory for Nonsingular Measures}	\label{sec::nonsingular}

To study the theory for nonsingular measures, we 
make the following assumptions.\\
{\bf Assumptions.}
\begin{itemize}
\item[(P1)] $p$ has a compact support $\K$ and is a Morse function and is four times differentiable with $\|p\|_{\ell,\infty}<\infty$ for $\ell=0,\cdots, 4.$
\item[(K1)] $K(x)$ has compact support and is non-increasing on $[0,1]$, and has at least fourth-order bounded derivative
and
$$
\int \|x\|^2 K^{(\beta)}(\|x\|) dx <\infty, \qquad \int  \left(K^{(\beta)}(\|x\|)\right)^2 dx <\infty
$$
for $|\beta| \leq 2$ and $K^{(2)}(0) <0$.

\item[(K2)] Let 
\begin{align*}
\mathcal{K}_r &= \left\{y\mapsto K^{(\beta)}\left(\frac{\|x-y\|}{h}\right): x\in\mathbb{R}^d, |\beta|=r, \bar{h}>h>0\right\},
\end{align*}
where $K^{(\beta)}$ is defined in equation \eqref{eq::d1}
and $\mathcal{K}^*_l = \bigcup_{r=0}^l \mathcal{K}_r$
and $\bar{h}$ is some positive number. 
We assume that $\mathcal{K}^*_2$ is a VC-type class. i.e. 
there exists constants $A,v$ and a constant envelope $b_0$ such that
\begin{equation}
\sup_{Q} N(\mathcal{K}^*_2, \cL^2(Q), b_0\epsilon)\leq \left(\frac{A}{\epsilon}\right)^v,
\label{eq::VC}
\end{equation}
where $N(T,d_T,\epsilon)$ is the $\epsilon$-covering number for a
semi-metric set $T$ with metric $d_T$ and $\cL^2(Q)$ is the $L_2$ norm
with respect to the probability measure $Q$.

\end{itemize}

Assumption (P1) is a common condition to guarantee that critical points are well-separated
and will not move too far away
under a small perturbation on the gradient and Hessian of the density function \citep{chazal2014robust,chen2016statistical}.
We need the fourth-order derivative to ensure the estimated density Hessian matrix
converges to the population density Hessian matrix 
(the bias in estimating the Hessian matrix depends on fourth-order derivatives).
Assumption (K1) is a standard condition on kernel function \citep{wasserman2006all,scott2015multivariate}.
Assumption (K2) regularizes the complexity of kernel functions so  
we have uniform bounds on density, gradient, and Hessian estimation.
It was first proposed by \cite{Gine2002} and \cite{Einmahl2005}
and was later used in various studies such as \cite{genovese2009path,genovese2014nonparametric,chen2015asymptotic}.

We first study the error rates under nonsingular measures.
In the case of the $\lambda$-tree, error rates are well-studied, and 
we summarize them in the following theorem.

\begin{thm}
Assume (P1, K1--2).
Then when $h\rightarrow 0,\frac{nh^{d+4}}{\log n} \rightarrow\infty$,
\begin{align*}
\|\hat{p}_n-p\|_\infty & =  O(h^2) + O_P\left(\sqrt{\frac{\log n}{nh^d}}\right)\\
\|\hat{p}_n-p\|_\mu & = O(h^2) + O_P\left(\sqrt{\frac{1}{nh^d}}\right)\\
\|\hat{p}_n-p\|_P & = O(h^2) + O_P\left(\sqrt{\frac{1}{nh^d}}\right)\\
P\left(T_{\hat{p}_n}\overset{T}{\approx} T_{p}\right)& \geq 1-c_0\cdot e^{-c_1\cdot nh^{d+4}},
\end{align*}
for some $c_0, c_1>0$.
\label{thm::lambda::non}
\end{thm}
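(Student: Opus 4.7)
The plan is to reduce the four bounds to standard bias--variance estimates for the KDE and its first two derivatives, plus a Morse-theoretic stability argument for the topological claim. For any multi-index $\beta$ with $|\beta|\leq 2$ write
\[
\hat p_n^{(\beta)}(x) - p^{(\beta)}(x) = \bigl(\hat p_n^{(\beta)}(x) - \E\hat p_n^{(\beta)}(x)\bigr) + \bigl(\E\hat p_n^{(\beta)}(x) - p^{(\beta)}(x)\bigr).
\]
A Taylor expansion of $p$ together with (P1) (bounded fourth derivatives) and the usual kernel change-of-variable under (K1) gives a bias of $O(h^2)$ uniformly in $x$ for every such $\beta$. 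For the stochastic part I would apply the uniform empirical-process inequality for VC-type kernel classes: (K2) supplies the polynomial covering number, and (P1) bounds the envelope, yielding
\[
\sup_x \bigl|\hat p_n^{(\beta)}(x) - \E\hat p_n^{(\beta)}(x)\bigr| = O_P\!\left(\sqrt{\frac{\log n}{nh^{d+2|\beta|}}}\right).
\]
The case $|\beta|=0$ is exactly the first displayed bound.

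For the $L_1$ and $L_1(P)$ rates the $\sqrt{\log n}$ factor is unnecessary. Since $\Vol(\K)<\infty$, the integrated bias is still $O(h^2)$ by Fubini, and for the stochastic piece Jensen's inequality and a direct variance calculation from (K1) give
\[
\E\|\hat p_n - \E \hat p_n\|_\mu \leq \int_{\K} \sqrt{\mathrm{Var}(\hat p_n(x))}\,dx = O\!\left(\frac{1}{\sqrt{nh^d}}\right),
\]
and the same computation with $dP$ in place of $dx$ handles the $L_1(P)$ rate; Markov's inequality converts these moment bounds into the stated $O_P$ rates.

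The topological assertion requires a quantitative stability result for cluster trees of Morse functions, as in \cite{chen2016statistical,chazal2014robust}: there exists $\eta>0$ depending only on $p$ (controlled by the minimum gap between the critical tree-levels in $\mathcal{A}_p$ and the minimum absolute eigenvalue of the Hessian at critical points) such that whenever
\[
\max\bigl\{\|\hat p_n-p\|_\infty,\,\|\nabla \hat p_n - \nabla p\|_\infty,\,\|\nabla^2 \hat p_n - \nabla^2 p\|_\infty\bigr\} < \eta,
\]
the trees satisfy $T_{\hat p_n}\overset{T}{\approx} T_p$. Applying Talagrand's inequality to each of the three empirical processes indexed by $\mathcal{K}_0,\mathcal{K}_1,\mathcal{K}_2$, with deviation level fixed at a constant multiple of $\eta$, produces a tail of the form $\exp(-c\,nh^{d+2r})$ for $r=0,1,2$; the Hessian process ($r=2$) is the slowest and dictates the exponent $e^{-c_1 nh^{d+4}}$.

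The main technical obstacle is extracting the $c_0/h^d$ prefactor with a dimension-free exponent in the exponential. This requires the peeling/localization step in the empirical-process machinery, whose entropy integral is controlled by the VC index $v$ from (K2) and whose discretization over $\K$ at bandwidth resolution $h$ contributes the $O(h^{-d})$ cost. Once that constant is extracted, the bandwidth regime $nh^{d+4}/\log n\to\infty$ in the hypothesis is exactly what guarantees that the Hessian bias plus variance falls below $\eta$ for all large $n$, closing the argument; everything else is essentially bookkeeping on standard KDE estimates.
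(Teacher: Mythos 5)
Your proposal is correct and follows essentially the same route the paper takes: the paper does not prove this theorem in detail but cites the standard bias--variance/empirical-process results of Gin\'e--Guillou and Einmahl--Mason for the $L_\infty$ rate, the textbook variance computation for the $L_1$ and $L_1(P)$ rates, and Lemma 2 of \cite{chen2016statistical} together with Talagrand's inequality for the Hessian process for the topological bound --- exactly the three ingredients you assemble. The only nitpick is that the constant envelope for the VC-type class comes from (K2), not (P1), but this does not affect the argument.
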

The rate of consistency under the $L_\infty$ error can be found in \cite{chen2015density,Gine2002,Einmahl2005}.
The integrated error and probability error can be seen in \cite{scott2015multivariate}.
And the topological error bound follows Lemma 2 in \cite{chen2016statistical} and the concentration of $L_\infty$ metric
for the estimated Hessian matrix.

The requirement of $h$ in Theorem~\ref{thm::lambda::non} enforces
the uniform convergence of the KDE as well as its first and second derivative. 
Uniform convergence of derivatives of the KDE implies the convergence of some geometric structures
of the density function, such as the ridges \citep{chen2015asymptotic,genovese2014nonparametric}, 
critical points \citep{chazal2014robust,chen2016comprehensive}, 
and persistent diagrams \citep{cohen2007stability,fasy2014confidence,chen2017tutorial}.

Now we turn to the consistency for $\alpha$-tree.
To derive the rate for the $\alpha$-tree,
we need to study the convergence rate of an estimated level set when the level is 
the density value of a critical point (also known as a critical level). 
The reason is that the quantity $\alpha(x) = 1-P(L_{p(x)})$ 
is the probability content of upper level set $L_{p(x)} = \{y: p(y)\geq p(x)\}$. 
When $p(x)=p(c)$ for some critical point $c$ of $p$, we face the problem of analyzing the stability of 
level sets at a critical level.

\begin{thm}[Level set error at a critical value]
Assume (P1) and (K1--2) and $d\geq 2$.
Let $\lambda$ be a density level corresponding to the density of a critical point.
When $h\rightarrow 0,\frac{\log n}{nh^{d+4}}\rightarrow 0$,
$$
\mu\left(\hat{L}_\lambda\triangle L_{\lambda}\right) = O_P\left(\|\hat{p}_n - p\|_{\infty}^{\frac{d}{d+1}}\right).
$$
\label{thm::critical}
\end{thm}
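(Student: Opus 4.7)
The plan is to reduce the symmetric difference to a sublevel set of $|p-\lambda|$ and then trade off a small ball around each critical point against a thin tube around the level set elsewhere. First I would observe that for any $x\in \hat{L}_\lambda \triangle L_\lambda$, the value $\lambda$ lies between $p(x)$ and $\hat{p}_n(x)$, so
$$|p(x)-\lambda|\leq |\hat{p}_n(x)-p(x)|\leq w_n := \|\hat{p}_n-p\|_\infty.$$
Hence $\hat{L}_\lambda\triangle L_\lambda \subset \{x\in\K: |p(x)-\lambda|\leq w_n\}$, and it suffices to bound the Lebesgue measure of this sublevel set.

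Next I would use (P1) to localize around the critical points at level $\lambda$. Let $\mathcal{C}_\lambda=\{c\in \mathcal{C}:p(c)=\lambda\}$; since $p$ is Morse on the compact support $\K$, this set is finite. Write $B_r=\bigcup_{c\in\mathcal{C}_\lambda}B(c,r)$ for a radius $r>0$ to be optimized. On $B_r$ use the trivial volume bound $\mu(B_r)=O(r^d)$. On $B_r^c$ the Morse condition (via $\nabla p(x)=H(c)(x-c)+O(\|x-c\|^2)$ with $H(c)$ nondegenerate) yields $\|\nabla p(x)\|\gtrsim r$ in a fixed neighborhood of $\mathcal{C}_\lambda$, and $\|\nabla p\|$ is bounded below by a positive constant farther away by compactness, since no other critical point has value $\lambda$. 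The co-area formula then bounds the tube volume $\mu\bigl(\{|p-\lambda|\leq w_n\}\cap B_r^c\bigr)\leq C\,w_n/r$, because the tube width at each level-set point is at most $w_n/\|\nabla p\|$ and the $(d-1)$-dimensional area of $\{p=t\}\cap B_r^c$ is uniformly bounded for $t$ close to $\lambda$.

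Combining the two pieces gives $\mu(\hat{L}_\lambda\triangle L_\lambda)\leq C_1 r^d+C_2 w_n/r$, and minimizing at $r\sim w_n^{1/(d+1)}$ yields
$$\mu(\hat{L}_\lambda\triangle L_\lambda)\leq C\, w_n^{d/(d+1)}=O\bigl(\|\hat{p}_n-p\|_\infty^{d/(d+1)}\bigr).$$
To finish, I would invoke Theorem~\ref{thm::lambda::non}, which shows that $\|\hat{p}_n-p\|_\infty$ and $\|\hat{p}_n-p\|_\mu$ share the same polynomial stochastic rate up to logarithmic factors; absorbing the log factor into the $O_P$ notation re-expresses the bound in terms of $\|\hat{p}_n-p\|_\mu$ and gives the claim. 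The main obstacle will be the tube volume estimate on $B_r^c$: one must carefully patch together the regime where $\|\nabla p\|\sim r$ (close to $\mathcal{C}_\lambda$) with the regime where $\|\nabla p\|$ is bounded below by a universal constant (far from $\mathcal{C}_\lambda$) so that the co-area bound $O(w_n/r)$ holds uniformly in the scale parameter $r\to 0$. The passage from $L_\infty$ to $L_\mu$ is comparatively soft, since both KDE error norms scale like $n^{-1/2}$ up to logs and the restriction $d\geq 2$ guarantees that the contribution of the critical points themselves to the sublevel set is dominated by the tube contribution.
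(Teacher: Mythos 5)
Your decomposition is the same ball-versus-complement tradeoff that drives the paper's proof: a ball of radius $r$ around each critical point at level $\lambda$ contributing volume $O(r^d)$, a gradient lower bound of order $r$ outside it, and optimization of $r$ at the $1/(d+1)$ power of the error, so the source of the exponent $d/(d+1)$ is identical. The mechanism is different, though. You first establish the deterministic containment $\hat{L}_\lambda\triangle L_\lambda\subset\{x:|p(x)-\lambda|\le \|\hat{p}_n-p\|_\infty\}$ and then bound the volume of that tube by the co-area formula; the paper instead works on the boundary $\partial L_\lambda$ directly, using the normal-displacement approximation $d(x,\partial\hat{L}_\lambda)\approx |\hat{p}_n(x)-p(x)|/\|\nabla p(x)\|$ and integrating it over $\partial L_\lambda$, which forces it to verify a normal-compatibility condition and to track the location of the estimated saddle point --- neither of which your containment argument needs. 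In that respect your route is more elementary and self-contained.

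The genuine gap is the final step. Your tube argument delivers $O_P\bigl(\|\hat{p}_n-p\|_\infty^{d/(d+1)}\bigr)$, and the claim that the logarithmic factor separating $\|\hat{p}_n-p\|_\infty$ from $\|\hat{p}_n-p\|_\mu$ can be ``absorbed into the $O_P$'' is not valid: the statement asserts that $\mu(\hat{L}_\lambda\triangle L_\lambda)/\|\hat{p}_n-p\|_\mu^{d/(d+1)}$ is bounded in probability, and under the rates of Theorem~\ref{thm::lambda::non} the ratio $\|\hat{p}_n-p\|_\infty/\|\hat{p}_n-p\|_\mu$ is of order $\sqrt{\log n}$ in the variance-dominated regime, so your bound exceeds the stated one by a factor of order $(\log n)^{d/(2(d+1))}\to\infty$. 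This loss is exactly the price of replacing the pointwise error $|\hat{p}_n(x)-p(x)|$ by its supremum before integrating; the paper keeps the pointwise error inside an integral over the level-set boundary, which is what produces the $\|\cdot\|_\mu$ normalization. To repair your argument you would either have to settle for the (logarithmically weaker) $L_\infty$-normalized bound, or replace the uniform tube $\{|p-\lambda|\le w_n\}$ by a variable-width tube of width $|\hat{p}_n(x)-p(x)|/\|\nabla p(x)\|$ at each boundary point and integrate --- which is essentially the paper's argument.
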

The rate in Theorem~\ref{thm::critical} is slower than the usual density estimation rate.
This is because the boundary of $L_\lambda$ hits a critical point when $\lambda$ equals the density of a critical point. 
The regions around a critical point have a very low gradient,
which leads to a slower convergence rate. 
It is well-known \citep{Gine2002,Einmahl2005,genovese2014nonparametric} that under assumption (P) and (K1--2), 
$$
\|\hat{p}_n - p\|_{\infty} = O(h^2) + O_P\left(\sqrt{\frac{\log n}{nh^d}}\right).
$$

In Theorem~\ref{thm::critical}, we see that when $d$ is large, the quantity $\frac{d}{d+1}\rightarrow 1$
so the error rate is similar to $\|\hat{p}_n - p\|_{\infty}$.
This is because the regions that slow down the error rate are areas around the critical points.
These areas occupy a small volume when $d$ is large, which decreases the difference in the rate.


\begin{remark}
Theorem~\ref{thm::critical} complements many existing level set estimation theories. 
To our knowledge, no literature has worked on the situation where $\lambda$ equals the density of a critical point.
Level set theories mostly focus on one of the following three cases: 
(i) the gradient on the level set boundary $\partial L_{\lambda} = \{x: p(x)=\lambda\}$
is bounded away from $0$ \citep{Molchanov1990,Tsybakov1997,Walther1997,Cadre2006,Laloe2012,mammen2013confidence,chen2015density},
(ii) a lower bound on the density changing rate around level $\lambda$ \citep{singh2009adaptive, rinaldo2012stability}, 
(iii) an $(\epsilon,\sigma)$ condition for density \citep{chaudhuri2010rates,chaudhuri2014consistent}. 
When $\lambda$ equals a critical level, none of these assumptions hold. 

\end{remark}

Based on Theorem~\ref{thm::critical}, we derive the convergence rate of $\hat{\alpha}_n$.
\begin{thm}
Assume (P1) and (K1--2) and $d\geq 2$ and the smoothing bandwidth satisfies $h\rightarrow0,\frac{\log n}{nh^{d+4}}\rightarrow 0$.
Let $\mathcal{C} = \{x: \nabla p(x)=0\}$ be the collection of critical points
and let $a_n$ be a sequence of $n$ such that 
$\|\hat{p}_n-p\|_\infty = o(a_n)$. 
Then uniformly for all $x$,
$$
\hat{\alpha}_n(x) - \alpha(x) = 
\begin{cases}
O_P\left(\|\hat{p}_n-p\|_\infty\right) \quad&\mbox{, if}\quad |p(x)-p(c)|>a_n\mbox{ for all $c\in\mathcal{C}$,}\\
O_P\left(\|\hat{p}_n-p\|_\infty^{\frac{d}{d+1}}\right) \quad&\mbox{, otherwise}.
\end{cases}
$$
\label{thm::alpha_non}
\end{thm}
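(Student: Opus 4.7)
Let $\lambda=p(x)$ and $\hat\lambda=\hat p_n(x)$. The plan is to split
\[
\hat\alpha_n(x)-\alpha(x)=\bigl[\hat P_n(\hat L_{\hat\lambda}^c)-P(\hat L_{\hat\lambda}^c)\bigr]+\bigl[P(L_\lambda)-P(\hat L_{\hat\lambda})\bigr]
\]
and handle the two pieces separately. The first bracket, an empirical process evaluated at a data-dependent set, is controlled via Assumption~(K2): the family of KDE upper level sets $\{\hat L_t:t\in\R\}$ is contained in a VC-type class built from $\mathcal{K}^*_2$, and standard uniform deviation inequalities of Gin\'e--Guillou / Einmahl--Mason type then give $\sup_x|\hat P_n(\hat L_{\hat\lambda}^c)-P(\hat L_{\hat\lambda}^c)|=O_P(n^{-1/2})$, which is of smaller order than both target rates under the standard bandwidth regime $nh^{d+4}/\log n\to\infty$. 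The real work is therefore on the second bracket.

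For the population piece I would use $|P(L_\lambda)-P(\hat L_{\hat\lambda})|\le\|p\|_\infty\,\mu(L_\lambda\triangle\hat L_{\hat\lambda})$ together with the split
\[
L_\lambda\triangle\hat L_{\hat\lambda}\subseteq(L_\lambda\triangle L_{\hat\lambda})\cup(L_{\hat\lambda}\triangle\hat L_{\hat\lambda}).
\]
The first term compares two population level sets with $|\hat\lambda-\lambda|\le\|\hat p_n-p\|_\infty=o(a_n)$; by the coarea formula $\mu(L_\lambda\triangle L_{\hat\lambda})=\int_{\lambda\wedge\hat\lambda}^{\lambda\vee\hat\lambda}\phi(t)\,dt$ with $\phi(t)=\int_{\{p=t\}}\|\nabla p\|^{-1}\,d\mathcal{H}^{d-1}$. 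In Case~1, $\phi$ is bounded near $\lambda$, giving $O_P(\|\hat p_n-p\|_\infty)$, which is absorbed by the $\|\hat p_n-p\|_\mu$ target; in Case~2, a local Morse-lemma computation near the nearby non-degenerate critical point $c$ yields $\phi(t)=O(|t-p(c)|^{-1/2})$, and the piece is $O_P(\|\hat p_n-p\|_\infty^{1/2})$, still dominated by the $\|\hat p_n-p\|_\mu^{d/(d+1)}$ target for $d\ge 2$. The second term is an empirical level-set error at the single level $\hat\lambda$: in Case~1 the classical regular-level bound gives $O_P(\|\hat p_n-p\|_\mu)$, and in Case~2 it is precisely the situation covered by Theorem~\ref{thm::critical}.

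The hardest step is Case~2, because Theorem~\ref{thm::critical} is stated at an exact critical level whereas here $\hat\lambda$ is only within $O(a_n+\|\hat p_n-p\|_\infty)$ of some critical value $\lambda^\star=p(c)$. My remedy is the further decomposition
\[
\mu(L_{\hat\lambda}\triangle\hat L_{\hat\lambda})\le\mu(L_{\hat\lambda}\triangle L_{\lambda^\star})+\mu(L_{\lambda^\star}\triangle\hat L_{\lambda^\star})+\mu(\hat L_{\lambda^\star}\triangle\hat L_{\hat\lambda}):
\]
apply Theorem~\ref{thm::critical} to the middle term, the Morse estimate above to the first, and an analogous KDE stability bound to the last, which is valid because $\hat p_n$ inherits the Morse property of $p$ on a high-probability event (by (P1) and (K1)--(K2)) and its critical values sit within $\|\hat p_n-p\|_\infty$ of those of $p$. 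Uniformity in $x$ is preserved since the case dichotomy depends only on $p(x)$ and the VC-class bound above is already uniform; on the event $\{\|\hat p_n-p\|_\infty<a_n\}$, the classifications of $\lambda$ and $\hat\lambda$ with respect to the critical values of $p$ agree, and the case-specific bounds apply simultaneously to all $x$.
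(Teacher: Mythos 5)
Your architecture is essentially the paper's: an empirical-process term plus a population-measure comparison of level sets, with the latter split into a level-perturbation piece and a set-estimation piece, and Theorem~\ref{thm::critical} invoked near critical values. The paper writes $\hat\alpha_n(x)-\alpha(x)=(\hat P_n-P)(\hat L_{\hat p_n(x)})+[P(\hat L_{\hat p_n(x)})-P(\hat L_{p(x)})]+[P(\hat L_{p(x)})-P(L_{p(x)})]$, i.e.\ it puts the level shift on the \emph{estimated} level sets (re-running the three-region argument from the proof of Theorem~\ref{thm::critical} to get $P(\hat L_\lambda\triangle\hat L_{\lambda+\epsilon})=O_P(\epsilon^{d/(d+1)})$) and applies Theorem~\ref{thm::critical} at the deterministic level $p(x)$; you put the level shift on the population side, which is why you must apply Theorem~\ref{thm::critical} at the random level $\hat\lambda$ and need the extra detour through $\lambda^\star$. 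Two of your refinements are genuine improvements over the paper's write-up: the VC-class argument for the first bracket (needed for uniformity, since $\hat L_{\hat\lambda}$ is data-dependent, whereas the paper asserts $O_P(n^{-1/2})$ ``for a given set''), and the explicit treatment of the fact that $p(x)$ is only \emph{within} $a_n$ of a critical value rather than equal to one.

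There is, however, a quantitative error in your Case~2. From $\phi(t)=O(|t-p(c)|^{-1/2})$ you conclude $\mu(L_\lambda\triangle L_{\hat\lambda})=O_P(\|\hat p_n-p\|_\infty^{1/2})$ (and, in the detour, $\mu(L_{\hat\lambda}\triangle L_{\lambda^\star})=O(a_n^{1/2})$) and assert these are dominated by $\|\hat p_n-p\|_\mu^{d/(d+1)}$ for $d\ge2$. The inequality runs the wrong way: since $\tfrac12<\tfrac{d}{d+1}$ for $d\ge2$, one has $\delta^{1/2}\gg\delta^{d/(d+1)}$ as $\delta\to0$, so these terms are \emph{not} absorbed, and as written your Case~2 bound degrades to $O_P(a_n^{1/2})$, strictly weaker than the claimed rate. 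The culprit is bounding $\phi(t)$ by $\sup_{\{p=t\}}\|\nabla p\|^{-1}$, which is too crude near a non-degenerate critical point. The repair is to estimate the slab volume directly via the Morse lemma: $\mu(\{|p-p(c)|\le\epsilon\}\cap B(c,R_0))$ is $O(\epsilon^{d/2})$ at a local extremum and $O(\epsilon\log(1/\epsilon))$ at a saddle for $d=2$ (with $\phi$ in fact bounded near saddle values when $d\ge3$), all of which are $o(\epsilon^{d/(d+1)})$; with that substitution your argument closes. A smaller, shared imprecision: in Case~1 the uniform version of your coarea piece is $O_P(\|\hat p_n-p\|_\infty)$, which exceeds $O_P(\|\hat p_n-p\|_\mu)$ by a $\sqrt{\log n}$ factor --- the paper elides the same point when it equates the pointwise and integrated rates.
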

Theorem~\ref{thm::alpha_non} shows uniform error rates for $\hat{\alpha}_n$.
When the density of a given point is away from critical levels, 
the rate follows the usual density estimation rate.
When the given point has a density value close to some critical points,
the rate is slowed down by the low gradient areas around critical points. 
Note that the sequence $a_n$ is to make the bound uniform for all $x$.
To obtain an integrated error rate (and the probability error rate) of $\hat{\alpha}_n$, 
we can choose $a_n = \frac{1}{\log n}\left(O(h^2) + O_P\left(\sqrt{\frac{1}{nh^d}}\right)\right)$
which leads to the following result.


\begin{cor}
Assume (P1) and (K1--2) and $d\geq 2$.
Then when $h\rightarrow 0,\frac{nh^{d+4}}{\log n} \rightarrow\infty$,
\begin{align*}
\|\hat{\alpha}_n-\alpha\|_\infty & =  O\left(h^{\frac{2d}{d+1}}\right)+O_P\left(\left(\frac{\log n}{nh^d}\right)^{\frac{d}{2(d+1)}}\right)\\
\|\hat{\alpha}_n-\alpha\|_\mu & = O(h^2) + O_P\left(\sqrt{\frac{\log n}{nh^d}}\right)\\
\|\hat{\alpha}_n-\alpha\|_P & = O(h^2) + O_P\left(\sqrt{\frac{\log n}{nh^d}}\right)\\
P\left(T_{\hat{\alpha}_n}\overset{T}{\approx} T_\alpha\right)& \geq 1-c_0\cdot e^{-c_1\cdot nh^{d+4}},
\end{align*}
for some $c_0,c_1>0$.
\label{cor::alpha_non}
\end{cor}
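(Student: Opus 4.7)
The plan is to combine Theorem~\ref{thm::alpha_non} with the KDE rates in Theorem~\ref{thm::lambda::non}, picking the threshold sequence $a_n$ to balance the two regimes; the topological claim, meanwhile, can be obtained without any analytic rate by reducing to the $\lambda$-tree statement via monotonicity.

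For the $L_\infty$ bound I would simply take the supremum of the pointwise bound in Theorem~\ref{thm::alpha_non}. Because $\|\hat{p}_n-p\|_\mu \to 0$, the ``critical-level'' rate $\|\hat p_n-p\|_\mu^{d/(d+1)}$ dominates the regular rate $\|\hat p_n-p\|_\mu$, so one gets $\|\hat\alpha_n-\alpha\|_\infty = O_P(\|\hat p_n-p\|_\infty^{d/(d+1)})$ after replacing the $L_1$ density error by its $L_\infty$ counterpart (which is the uniform control actually needed for a sup-norm statement). Inserting the standard uniform KDE rate $\|\hat p_n-p\|_\infty = O(h^2) + O_P(\sqrt{\log n/(nh^d)})$ and raising to the $d/(d+1)$ power yields the stated expression.

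For the $L_1$ and $L_1(P)$ bounds I would split the integral according to the two cases of Theorem~\ref{thm::alpha_non}. Let $B_n = \{x : |p(x)-p(c)|\le a_n \text{ for some } c\in\mathcal{C}\}$. Since $p$ is a Morse function on a compact set, $\mathcal{C}$ is finite and, by nondegeneracy of the Hessian at each critical point, the level set $\{x : |p(x)-p(c)|\le a_n\}$ lies in a tube of Lebesgue measure $O(a_n^{d/2})$ (in fact a sharper bound holds, but this is already enough), so $\mu(B_n)$ and $P(B_n)$ are both $o(1)$. Splitting
\[
\int |\hat\alpha_n-\alpha|\,d\mu
\;\le\; \mu(B_n^c)\cdot O_P(\|\hat p_n-p\|_\mu) + \mu(B_n)\cdot O_P(\|\hat p_n-p\|_\mu^{d/(d+1)}),
\]
and choosing $a_n = \|\hat p_n-p\|_\mu/\log n$ as suggested in the text, the second summand is of smaller order than the first. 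The resulting rate is $O_P(\|\hat p_n-p\|_\mu) = O(h^2)+O_P(\sqrt{1/(nh^d)})$, and the identical argument with $d\mu$ replaced by $dP$ handles $\|\hat\alpha_n-\alpha\|_P$.

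The topological bound is the cleanest piece. The key observation is that $\hat\alpha_n$ is, by definition, an order-preserving transformation of $\hat p_n$: $\hat\alpha_n(x_1)\ge \hat\alpha_n(x_2)\Leftrightarrow \hat p_n(x_1)\ge \hat p_n(x_2)$. Running the argument of Lemma~\ref{lem::tree} on the empirical side gives $T_{\hat\alpha_n}\overset{T}{\approx} T_{\hat p_n}$ deterministically, and Lemma~\ref{lem::tree} itself gives $T_p \overset{T}{\approx} T_\alpha$. Hence
\[
\bigl\{T_{\hat p_n}\overset{T}{\approx} T_p\bigr\} \;\subseteq\; \bigl\{T_{\hat\alpha_n}\overset{T}{\approx} T_\alpha\bigr\},
\]
and the exponential bound from Theorem~\ref{thm::lambda::non} transfers verbatim. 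The main obstacle in the whole argument is the $L_\infty$ piece: Theorem~\ref{thm::alpha_non} as stated is phrased in terms of $\|\hat p_n-p\|_\mu$, so producing the $\sqrt{\log n}$ factor that appears in the sup-norm rate requires either a minor strengthening of that theorem in which the integrated density error is replaced by the uniform one, or a parallel argument tracking the uniform error through the critical-level level-set bound of Theorem~\ref{thm::critical}.
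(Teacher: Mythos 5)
Your proposal matches the paper's own (very brief) justification of this corollary: the $L_\infty$ rate is the supremum of the pointwise bound in Theorem~\ref{thm::alpha_non} with the uniform KDE error substituted for the integrated one, the $L_1$ and $L_1(P)$ rates follow because the set of critical-level points has vanishing measure, and the topological bound transfers from Theorem~\ref{thm::lambda::non} via the monotone-transformation argument of Lemma~\ref{lem::tree} applied on both the population and empirical sides. One small correction: the set $\{x:|p(x)-p(c)|\le a_n\}$ is a neighborhood of the entire level surface $\{p=p(c)\}$, so its Lebesgue measure is $O(a_n)$ rather than $O(a_n^{d/2})$ (the latter only accounts for the ball around $c$ itself), but $O(a_n)=o(1)$ still suffices for your splitting argument, so the conclusion is unaffected.
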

Comparing Corollary \ref{cor::alpha_non} to Theorem~\ref{thm::lambda::non},
we see that
only the $L_\infty$ error rate has a major difference
and the other two errors differ by a $\sqrt{\log n}$ factor.
This is because Theorem~\ref{thm::alpha_non} proves that, only at the level of a critical point, we will have 
a slower convergence rate. 
Thus, the $L_\infty$ error will be slowed down by these points.
However, the collection of points $\{x: p(x)=p(c)\mbox{ for some }c\in\mathcal{C}\}$ has Lebesgue measure zero
so the slow convergence rate does not translate to the integrated error and the probability error.
The topological error follows from Theorem~\ref{thm::lambda::non} and Lemma~\ref{lem::tree}:
$
T(\hat{p}_n)\overset{T}{\approx} T_{\hat{\alpha}_n},
T(p)\overset{T}{\approx} T_\alpha.
$


\section{Singular Measures: Error Rates}	\label{sec::singular}

Now we study error rates under singular measures.
When a measure is singular, the usual (Radon-Nikodym) density cannot be defined. 
Thus, we cannot define the $\lambda$-tree.
However, as we discussed in Section~\ref{sec::singular},
we are still able to define the $\alpha$-tree.
Thus, in this section, we will focus on error rates for the $\alpha$-tree.

\subsection{Analysis of the KDE under Singular Measures}

To study the convergence rate, we first investigate the bias of smoothing in the singular measure.
Let $p_h(x) = \mathbb{E}(\hat{p}_n)$, which is also known as the smoothed density.


{\bf Assumption.}
\begin{itemize}
\item[(S)] For all $s<d$, ${\overline{\K_s}}$ 
is a smooth manifold with positive reach and $\K$ is a compact set.
\item[(P2)] $\rho(x)$ is a generalized Morse function and there exists some $\rho_{\min},\rho_{\max}>0$ such that 
$0<\rho_{\min}\leq \rho(x)\leq\rho_{\max}<\infty$ for all $x$.
Moreover, for any $s>0$, $\rho^\dagger_s$ is unique and has bounded continuous derivatives up to the fourth order.
\end{itemize}

Assumption (S) ensures that
$\K_s$ is smooth and every connected component
of $\K_s$ is separated for each $s$.
Assumption (P2) is a generalization of (P1) to singular distributions.

\begin{lem}[Bias of the smoothed density]
Assume (S, P2).
Let $x\in\mathring{\K}_s$ and define
$m(x) = \min\{\ell> s: x\in\overline{\K_\ell}\}-s $.
Let $C^\dagger_\ell = (\int_{B_\ell}K(\|y\|)dy)^{-1}$, where $B_\ell = \{y: \|y\|\leq1,y_{\ell+1}=y_{\ell+2}=\cdots=y_d=0\}$
for $\ell=1,\cdots, d$ and $dy$ is integrating with respect to $\ell$-dimensional area and $C^\dagger_0 = 1/K(0)$.
Then 
for a fixed $x$, when $h\rightarrow 0$ and $m(x)>0$,
$$
C^\dagger_{\tau(x)}h^{d-\tau(x)}\cdot p_h(x) = \rho(x)+ 
\begin{cases}
O(h^2) + O(h^{m(x)}),\quad &{ if }\quad m(x)>0\\
O(h^2),\quad &{ if }\quad m(x)= 0\\
\end{cases}.
$$
Moreover, if $\overline{\K_\ell}\cap \K_s\neq \phi$, for some $s<\ell$,
then there exists $\epsilon>0$ such that 
$$
\lim_{h\rightarrow 0}\sup_{x\in\K}|C_{\tau(x)}h^{d-\tau(x)}\cdot  p_h(x) - \rho(x)|>\epsilon>0.
$$
\label{lem::smooth::bias}
\end{lem}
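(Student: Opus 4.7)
The plan is to decompose the singular measure along its stratification $\K=\bigcup_{\ell=0}^d \K_\ell$, analyze the bias of $p_h$ stratum by stratum, and recombine. I would write $P=\sum_{\ell=0}^d P_\ell$ with $P_\ell$ the restriction of $P$ to $\K_\ell$, so that
$$
p_h(x)=\sum_{\ell=0}^d q_{h,\ell}(x),\qquad q_{h,\ell}(x)=\frac{1}{h^d}\int_{\K_\ell}K\!\left(\frac{\|x-y\|}{h}\right)dP_\ell(y),
$$
and note that, by the definitions of $\tau$ and $\rho$, the measure $P_\ell$ has density $\rho$ with respect to $\ell$-dimensional Hausdorff measure on $\K_\ell$.

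The main step is the on-stratum term $q_{h,s}(x)$ for $x\in\K_s$. Assumption (S) provides a local parametrization $\varphi:U\subset T_s(x)\to\overline{\K_s}$ with $\varphi(0)=x$ and $D\varphi(0)=I$, whose Jacobian and coordinate distortion are controlled by ${\sf reach}(\overline{\K_s})>0$. Substituting $y=\varphi(hu)$, the Jacobian becomes $h^s(1+O(h^2\|u\|^2))$ and $\|x-y\|=h\|u\|+O(h^2\|u\|^2)$; Taylor-expanding $\rho^\dagger_s$ to second order around $x$ (valid under (P2)) and exploiting the radial symmetry of $K$ eliminates the first-order term, leaving
$$
q_{h,s}(x)=\frac{\rho(x)}{h^{d-s}\,C^\dagger_s}+O(h^{s-d+2}).
$$
For $\ell>s$ with $x\in\overline{\K_\ell}$, the same parametrization on $\K_\ell$, applied to the portion of $B(x,h)\cap\K_\ell$ accessible from $x$, yields $q_{h,\ell}(x)=O(h^{\ell-d})$; after multiplication by $C^\dagger_s h^{d-s}$ this contributes $O(h^{\ell-s})$, and taking the smallest such $\ell$ produces the $O(h^{m(x)})$ term. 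For every remaining $\ell$, one has $d(x,\K_\ell)>0$, so the compact support of $K$ forces $q_{h,\ell}(x)=0$ for all sufficiently small $h$. Summing and multiplying by $C^\dagger_s h^{d-s}$ completes the first assertion.

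For the non-uniform claim, fix $z\in\overline{\K_\ell}\cap\K_s$ with $s<\ell$ and choose $x_h\in\K_\ell$ with $\|x_h-z\|\le h/2$ (possible because $z$ is a limit of $\K_\ell$-points). Now $\tau(x_h)=\ell$, so the correct normalization is $C^\dagger_\ell h^{d-\ell}$. The nearby $s$-dimensional stratum $\K_s$ contributes $q_{h,s}(x_h)\sim \rho(z)\,h^{s-d}$ by the same local Hausdorff-measure computation, so $C^\dagger_\ell h^{d-\ell}\,q_{h,s}(x_h)\sim h^{s-\ell}\to\infty$ since $s<\ell$. Consequently $|C^\dagger_\ell h^{d-\ell}p_h(x_h)-\rho(x_h)|$ does not vanish; in fact it exceeds any fixed $\epsilon$ once $h$ is small enough, which proves the supremum bound.

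The main obstacle is the on-stratum estimate: showing that the manifold curvature of $\overline{\K_s}$ (bounded via positive reach) combines cleanly with the second-order remainder of $\rho^\dagger_s$ to produce a net $O(h^2)$ bias, with the linear-in-$u$ term eliminated by radial symmetry of $K$ only after pushing the integral onto the flat tangent coordinates. The boundary-stratum estimate also requires care, because $x\in\overline{\K_\ell}\setminus\K_\ell$ makes the effective integration domain a half-space-like (rather than full) subset of the $\ell$-ball in $T_\ell$, and one must verify that this only affects constants and not the leading $h^{\ell-d}$ scaling.
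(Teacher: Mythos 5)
Your proposal is correct and follows essentially the same route as the paper: decompose $p_h$ over the strata, kill the nearby lower/intermediate strata by the compact kernel support, Taylor-expand $\rho^\dagger_s$ on the flat tangent coordinates (with the linear term removed by radial symmetry and the curvature controlled by positive reach) to get the $O(h^2)$ term, bound the adjacent higher stratum by its $O(h^\ell)$ local volume to get the $O(h^{m(x)})$ term, and exhibit non-uniformity via a sequence $x_h\in\K_\ell$ approaching a point of $\K_s$ so that the $C^\dagger_\ell h^{d-\ell}$ normalization blows up the $h^{s-d}$ contribution from $\K_s$. The only cosmetic difference is that the paper takes $\|x_h-x\|=h^2$ to get the sharper ratio $p_h(x_h)/p_h(x)\to 1$, while your $\|x_h-z\|\le h/2$ suffices for the divergence claim.
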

Lemma~\ref{lem::smooth::bias} describes the bias of the KDE.
The scaling factor $C^\dagger_{\tau(x)}h^{d-\tau(x)}$ rescales the smoothed density 
to make it comparable to the generalized density.
The first assertion is a pointwise convergence of smoothed density.
In the case of $m(x)>0$, the bias contains two components: $O(h^2)$, the usual smoothing bias,
and $O(h^{m(x)})$, the bias from a higher dimensional support.
This is because the KDE is isotropic, so the probability content outside $\K_s$ will also be 
included, which causes additional bias.
The second assertion states that the smoothed density does not uniformly converge to the generalized density $\rho(x)$,
so together with the first assertion, we conclude that the smoothing bias converges pointwisely but not 
uniformly.
Next, we provide an example showing the failure of uniform convergence of a singular measure.

\begin{example}[Failure of uniform convergence]
\label{ex::point}
We consider $X$ from the same distribution as in Figure~\ref{fig::Tree}:
with a probability of $0.3$, $X=2$, and with a probability of $0.7$, $X$ follows a standard normal.
For simplicity, we assume that the kernel function is the spherical kernel $K(x) = \frac{1}{2}I(0\leq x\leq 1)$
and consider the smoothing bandwidth $h\rightarrow 0$.
This choice of kernel yields $C^\dagger_1 = 1$.
Now consider a sequence of points $x_h = 2+\frac{h}{2}$. 
Then the smoothed density at each $x_h$ is
\begin{align*}
p_h(x_h)
&= \frac{1}{h}P(x_h-h<X<x_h+h) \\
&= \frac{1}{h}P\left(2-\frac{h}{2}<X<2+\frac{3h}{2}\right) \\
&\geq \frac{1}{h}P\left(X=2\right)  = \frac{3}{10h},
\end{align*}
which diverges when $h\rightarrow 0$.
However,
it is easy to see that $\tau(x_h)=1$ and $\rho(x_h) =  \frac{7}{10}\phi(x_h)\rightarrow \frac{7}{10}\phi(1)$ which is a finite number.
Thus, $|\mathbb{E}(p_h(x_h)-\rho(x_h)|$ does not converge.
\end{example}

\begin{remark}
The scaling factor in Lemma~\ref{lem::smooth::bias} $C^\dagger_{\tau(x)}h^{d-\tau(x)}$
depends on the support $\K_s$ where $x$ resides.
In practice, we do not know $\tau(x)$ so we cannot properly rescale $\hat{p}_n(x)$ to estimate $\rho(x)$. 
However, we are still able to rank pairs of data points based on Lemma~\ref{lem::smooth::bias}.
To see this,
let $x_1$ and $x_2$ be the two points that we want to compare their orderings
(i.e., we want to know $x_1\prec_{\tau,\rho} x_2$ or $x_1\succ_{\tau,\rho} x_2$ or $x_1\simeq_{\tau,\rho} x_2$). 
When $x_1$ and $x_2$ are both in $\K_s$ for some $s$,
the scaling does not affect the ranking between them
so the sign of $\rho(x_1)-\rho(x_2) $ is the same as that of $p_h(x_1)-p_h(x_2)$.
When $x_1$ and $x_2$ are in different supports (i.e. $x_1\in\K_{s_1}, x_2\in\K_{s_2}$, where $s_1\neq s_2$),
$p_h(x_1)$ and $p_h(x_2)$ diverge at different rates, meaning that we can eventually distinguish them.
Thus, ordering for most points can still be recovered under singular measures.
This is an important property that leads to the consistency of $\hat{\alpha}_n$ under other error measurements.
\end{remark}


Due to the failure of uniform convergence in the bias, the $L_\infty$ error of $\hat{\alpha}_n$
does not converge under singular measures.
\begin{cor}[$L_\infty$ error for singular measures]
Assume (S, P2).
When $\overline{\K_d}\cap \overline{\K_s}\neq \phi$, for some $s<d$,
$\|\hat{\alpha}_n-\alpha\|_\infty$ does not converge to $0$.
Namely, there exists $\epsilon>0$ such that 
$$
\liminf_{n,h} P(\|\hat{\alpha}_n-\alpha\|_\infty>\epsilon)>0.
$$
\label{cor::alpha_infty_s}
\end{cor}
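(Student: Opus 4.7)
My plan is to convert the non-uniform bias from Lemma~\ref{lem::smooth::bias} into an ordering disagreement that forces a constant-sized error in $\hat\alpha_n$. The mechanism: points $x_h\in\K_d$ lying within distance $h$ of $\overline{\K_s}$ are systematically over-weighted by the KDE (their smoothed density diverges at rate $h^{s-d}$), while typical points in $\K_d$ stay $O(1)$. Consequently the empirical ordering ranks such an $x_h$ above a fixed positive-$P$-mass region $A\subset\K_d$ whose true $\omega$-ranking in fact lies above $\omega(x_h)$, producing a $\Theta(1)$ discrepancy between $\hat\alpha_n(x_h)$ and $\alpha(x_h)$.

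The steps I would carry out are as follows. \textbf{(i)} Pick $x^*\in\overline{\K_d}\cap\overline{\K_s}$. Using the positive reach of $\overline{\K_s}$ afforded by Assumption~(S), construct $x_h\in\K_d$ with $\|x_h-x^*\|=O(h)$ such that $B(x_h,h)\cap\overline{\K_s}$ has $s$-dimensional volume at least $c_0 h^s$; the same local computation behind Lemma~\ref{lem::smooth::bias} then gives $p_h(x_h)\ge c_1 h^{s-d}$, which diverges. \textbf{(ii)} Use the VC concentration from Assumption~(K2) to upgrade this to $\hat{p}_n(x_h)\ge \tfrac12 c_1 h^{s-d}$ with probability $1-o(1)$. \textbf{(iii)} Choose a fixed ball $A\subset\K_d$ sitting at a positive distance from every lower-dimensional support, with $P(A)>0$ and $\rho(y)>\rho^\dagger_d(x^*)+\delta_0$ throughout $A$ for some $\delta_0>0$; such an $A$ exists because $\rho^\dagger_d$ is a nonconstant generalized Morse function whose maximum on $\overline{\K_d}$ strictly exceeds the value at~$x^*$. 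Since $A$ lies away from all singularities, the usual KDE concentration gives $\hat{p}_n(y)\le 2\rho_{\max}$ uniformly on $A$ with high probability.

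Combining the bounds, for $h$ small enough we have $\hat{p}_n(y)\le 2\rho_{\max}<\hat{p}_n(x_h)$ for every $y\in A$, so $A\subset\{y:\hat{p}_n(y)\le\hat{p}_n(x_h)\}$. Meanwhile, $\omega(y)>\omega(x_h)$ for $y\in A$ and $h$ small (both points lie in $\K_d$ and $\rho(y)>\rho(x_h)=\rho^\dagger_d(x^*)+o(1)$), so $A\cap\{y:\omega(y)\le\omega(x_h)\}=\phi$. Therefore
$$
\hat\alpha_n(x_h)-\alpha(x_h)\ \ge\ \hat{P}_n(A)\ \ge\ P(A)/2
$$
with probability tending to one. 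Taking $\epsilon=P(A)/4$ yields $\liminf_{n,h}P(\|\hat\alpha_n-\alpha\|_\infty>\epsilon)>0$.

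The main obstacle is the geometric step~\textbf{(i)}: turning the set-theoretic intersection $\overline{\K_d}\cap\overline{\K_s}\ne\phi$ into the quantitative bound that $B(x_h,h)\cap\overline{\K_s}$ has $s$-dimensional volume $\gtrsim h^s$. This is exactly where the positive-reach part of Assumption~(S) does the work, and essentially reprises the local computation inside the proof of Lemma~\ref{lem::smooth::bias}. A secondary, routine concern is the uniform KDE concentration on the compact set $A\subset\K_d$ disjoint from $\overline{\K_{<d}}$, which follows from standard nonsingular KDE theory under Assumption~(K2).
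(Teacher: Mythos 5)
Your mechanism --- a witness point $x_h\in\K_d$ at distance $O(h)$ from the singular stratum whose estimated density diverges at rate $h^{s-d}$, played against a fixed positive-mass comparison set that it overtakes in the empirical ordering but not in the true ordering --- is exactly the intended fleshing-out of the paper's one-sentence justification (the paper merely points to the second assertion of Lemma~\ref{lem::smooth::bias}), and steps (i)--(ii) are sound. The genuine gap is in step (iii): the existence of a set $A\subset\K_d$, bounded away from the lower-dimensional supports, with $P(A)>0$ and $\rho(y)>\rho^\dagger_d(x^*)+\delta_0$ on $A$, is not implied by (S, P2). You justify it by asserting that the maximum of $\rho^\dagger_d$ on $\overline{\K_d}$ strictly exceeds $\rho^\dagger_d(x^*)$, but nothing forbids $x^*$ from being the maximizer --- e.g.\ a point mass placed at the mode of an otherwise absolutely continuous component. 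Being a nonconstant Morse function does not exclude this. In that configuration your construction produces no contradiction: for $x_h$ with $\|x_h-x^*\|=th$, the set $\{y:\hat p_n(y)>\hat p_n(x_h)\}$ is essentially $B(x^*,th)$ (by monotonicity of $K$), whose empirical mass is $P(\K_0)+o_P(1)$, while $\alpha(x_h)\to P(\K_d)$ because $\rho$ is maximal near $x^*$; the two orderings agree and $\hat\alpha_n(x_h)-\alpha(x_h)=o_P(1)$. A hypothesis in the spirit of Assumption (C) (critical points of $\rho^\dagger_d$ at positive distance from lower-dimensional supports) would rule this out and validate your choice of $A$, but the corollary assumes only (S, P2), so either you must supply a different comparison set in the degenerate case or flag the extra non-degeneracy condition explicitly.

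A secondary, fixable issue: the displayed inequality $\hat\alpha_n(x_h)-\alpha(x_h)\geq\hat P_n(A)$ does not follow from $A\subset\hat S:=\{y:\hat p_n(y)\leq\hat p_n(x_h)\}$ and $A\cap S=\phi$ with $S:=\{y:\omega(y)\leq\omega(x_h)\}$ alone; since $\hat\alpha_n(x_h)-\alpha(x_h)=\hat P_n(\hat S)-P(S)$, you also need $\hat P_n(\hat S\setminus A)\geq P(S)-o_P(1)$, i.e.\ that $S$ is contained in $\hat S$ up to a set of vanishing $P$- and $\hat P_n$-mass. This does hold --- $\hat p_n(x_h)$ diverges while $\hat p_n$ is uniformly $O_P(1)$ on $\K_d(h)$, and $P(S\cap\K_d^C(h))\to 0$ --- but it is an additional uniform-concentration step that should appear in the proof rather than being absorbed into the single displayed bound.
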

The proof of Corollary~\ref{cor::alpha_infty_s} is a direct application of the failure of uniform convergence in
smoothing bias shown in Lemma~\ref{lem::smooth::bias}.
This corollary shows that for a singular measure, 
the $L_\infty$ error of the estimator $\hat{\alpha}_n$ does not converge in general.
Thus, there is no guarantee for the Hartigan consistency of the estimated $\alpha$-tree.


\subsection{Error Measurements}

Although Corollary~\ref{cor::alpha_infty_s} presents a negative result on estimating the $\alpha$-tree,
in this section, we show that the estimator $\hat{\alpha}_n$ is still consistent under other error measurements.
A key observation is that
there is a good region where the scaled KDE converges uniformly.

Let $\K_s(h) = \mathring{\K}_s \backslash(\bigcup_{\ell<s} \K_\ell\oplus h)$ be the set 
that is in the interior of $\K_s$ and is away from lower dimensional 
supports for $s>0$; in the case of $s=0,$ we define $\K_0(h) = \K_0$.
We define $\K(h) = \bigcup_{s\leq d} \K_s(h)$, which is the union of each $\K_s(h)$.
Figure~\ref{fig::ex02-1} shows the good region of support $\K_s(h)$ and the original support $\K_s$ in Example~\ref{ex::ex02}.
Later we will show that the set $\K(h)$ is the good region.

\begin{figure}
\center
\includegraphics[height=1.5in]{figures/ex02-1}
\includegraphics[height=1.5in]{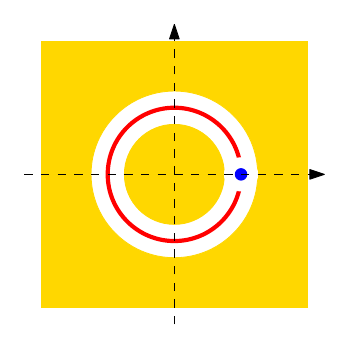}
\caption{
Good regions $\K_s(h)$ for Example \ref{ex::ex02}.
Left: the original $\K_2, \K_1$, and $\K_0$. 
Right: the corresponding $\K_2(h), \K_1(h)$, and $\K_0(h)$.
The yellow area is the set $\K_2(h)$, which are regions in $\K_2$ away from
lower-dimensional supports $\K_1$ and $\K_0$.
The red area is the set $\K_1(h)$, which are regions in $\K_1$ where regions close to $\K_0$ have been removed.
The blue dot is $\K_0(h)$, which is the same as $\K_0$ because there is no lower dimensional support.
Note that $\K(h)$ is the union of the three color regions in the right panel.
}
\label{fig::ex02-1}
\end{figure}

In Lemma~\ref{lem::smooth::bias}, the quantity 
$$
m(x) = \min\{\ell\geq \tau(x): x\in\overline{\K_\ell}\}-\tau(x)  
$$
plays a key role
in determining the rate of smoothing bias. 
Only when $m(x)=1$, do we have a slower rate for the bias. 
Thus, to obtain a uniform rate on the bias, we introduce the quantity 
\begin{equation}
m_{\min} = \inf_{x\in \K, m(x)>0} m(x).
\label{eq::mx}
\end{equation}
If $m(x)=0$ for all $x\in \K$, we define $m_{\min}=2$.

\begin{thm}[Consistency of the KDE under singular measures]
Assume (S, P2, K1--2).
Let $C^\dagger_\ell$ be the constant in Lemma~\ref{lem::smooth::bias}.
Define
\begin{align*}
\delta_{n,h,s}&=\sup_{x\in\K_s(h)}\|C^{\dagger}_{s}h^{d-s}\hat{p}_n(x)-\rho(x)\| \\
\delta^{(1)}_{n,h,s}&=\sup_{x\in\K_s(h)}\|C^{\dagger}_{s}h^{d-s} \nabla_{T_s(x)}\hat{p}_n(x)-\nabla_{T_s(x)}\rho(x)\|_{\max}\\
\delta^{(2)}_{n,h,s}&=\sup_{x\in\K_s(h)}\|C^{\dagger}_{s}h^{d-s} \nabla_{T_s(x)}\nabla_{T_s(x)}\hat{p}_n(x)-\nabla_{T_s(x)}\nabla_{T_s(x)}\rho(x)\|_{\max},
\end{align*}
where $\nabla_{T_s(x)}$ is taking gradient with respect to the tangent space of $\K_s$ at $x$.
Then
when $h\rightarrow 0, \frac{nh^{d+4}}{\log n}\rightarrow \infty$,
\begin{equation}
\begin{aligned}
\delta_{n,h,s} &= O(h^{2\bigwedge m_{\min}}) + O_P\left(\sqrt{\frac{\log n}{nh^s}}\right),\\
\delta^{(1)}_{n,h,s} &= O(h^{2\bigwedge m_{\min}}) + O_P\left(\sqrt{\frac{\log n}{nh^{s+2}}}\right),\\
\delta^{(2)}_{n,h,s} &= O(h^{2\bigwedge m_{\min}}) + O_P\left(\sqrt{\frac{\log n}{nh^{s+4}}}\right),
\end{aligned}
\label{eq::rate}
\end{equation}
where $a\bigwedge b = \min\{a,b\}$.
\label{thm::KDE}
\end{thm}
Theorem~\ref{thm::KDE} shows that after rescaling, the KDE is uniformly consistent within the good region $\K_s(h)$
for density, gradient, and Hessian estimation.
A more interesting result is that, after rescaling, the error rate is the same as the usual $L_\infty$ error rate in the $s$-dimensional case
with a modified bias term 
(bias is affected by a higher dimensional support). 

\begin{remark}(Non-convergence of the integrated distance of the KDE)
\label{rm::IE_KDE}
One may wonder if 
the scaled KDE ($C^\dagger_{\tau(x)}h^{d-\tau(x)}\cdot \hat{p}_n(x)$) converges to the generalized density $\rho(x)$
under the integrated distance.
There is no guarantee for such a convergence because
$$
\int\|C^\dagger_{\tau(x)}h^{d-\tau(x)}\cdot \hat{p}_n(x)-\rho(x)\| dx = O_P(1).
$$
To see this, consider a point $x\in\K_s$ and let $\K_\ell$ be a higher order support ($\ell>s$)
with $x\in\overline{\K_\ell}$.
Then the region $B(x,h)\cap \K_\ell$ has $\ell$-dimensional volume at rate $O(h^{\ell-s})$.
For any point $y\in B(x,h)\cap \K_\ell$, $\tau(y)=\ell$ but
the KDE $\hat{p}_n(y)$ is at rate $O_P(h^{s-d})$. 
Thus, the difference between the scaled KDE and the generalized density is
$$
C^\dagger_{\ell}h^{d-\ell}\cdot \hat{p}_n(y)-\rho(y) = O_P(h^{s-\ell}).
$$
Such $y$ has $\ell$-dimensional volume at rate $O(h^{\ell-s})$,
so the integrated error is at rate $O_P(h^{s-\ell})\times O(h^{\ell-s}) = O_P(1)$. 
\end{remark}

Based on Theorem~\ref{thm::KDE}, we can derive a nearly uniform convergence rate of $\hat{\alpha}_n$.
\begin{thm}[Nearly uniform consistency of $\alpha$-trees]
Assume (S, P2, K1--2).
Let $\mathcal{C}_s$ be the collection of generalized critical points of $\K_s$. 
Let $\delta_{n,h,s}$ be defined in equation \eqref{eq::rate}
and $r_{n,h,s}$ be a quantity such that $\frac{\delta_{n,h,s}}{r_{n,h,s}}=o_P(1)$.
Then when $h\rightarrow 0,\frac{nh^{d+2}}{\log n}\rightarrow \infty,$ uniformly for every $x\in\K_s(h)$,
$$
\hat{\alpha}_n(x)-\alpha(x) = 
\begin{cases}
O\left(\delta_{n,h,s}\right) \quad&\mbox{if}\quad \inf_{c\in\mathcal{C}_s}|\rho(x)-\rho(c)|>r_{n,h,s},\\
O\left(\left(\delta_{n,h,s}\right)^{\frac{s}{s+1}}\right) \quad&\mbox{otherwise}.\\
\end{cases}
$$
\label{thm::alpha::point}
\end{thm}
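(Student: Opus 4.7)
The plan is to reduce the uniform control of $\hat\alpha_n(x)-\alpha(x)$ to an empirical process term plus a level-set symmetric-difference term, and then to split the latter by the support stratification. Writing $L_x=\{y:\omega(y)\leq \omega(x)\}$ and $\hat L_x=\{y:\hat p_n(y)\leq \hat p_n(x)\}$, I would decompose
$$\hat\alpha_n(x)-\alpha(x)=(\hat P_n-P)(\hat L_x)+\bigl(P(\hat L_x)-P(L_x)\bigr),$$
so that $|\hat\alpha_n(x)-\alpha(x)|\leq|(\hat P_n-P)(\hat L_x)|+P(\hat L_x\triangle L_x)$. The first term is $O_P(\sqrt{\log n/n})$ uniformly in $x$ by a Bernstein/Talagrand bound combined with the VC-type assumption (K2) applied to the random but nested class of KDE sublevel sets, and this is absorbed into $\delta_{n,h,s}$. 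The work concentrates on bounding $P(\hat L_x\triangle L_x)$ uniformly for $x\in\K_s(h)$.

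Using the stratification $\K=\bigcup_\ell \K_\ell$ and the good/boundary split $\K_\ell=\K_\ell(h)\cup(\K_\ell\setminus\K_\ell(h))$, the boundary layers contribute $P$-mass $O(h^{m_{\min}\wedge 2})$, which matches the bias component of $\delta_{n,h,s}$ and is harmless. For $y\in\K_\ell(h)$ with $\ell\neq s$, Theorem~\ref{thm::KDE} gives $\hat p_n(y)=(C^\dagger_\ell)^{-1}h^{-(d-\ell)}(\rho(y)+o_P(1))$ and $\hat p_n(x)=(C^\dagger_s)^{-1}h^{-(d-s)}(\rho(x)+o_P(1))$. The ratio $\hat p_n(y)/\hat p_n(x)$ diverges when $\ell<s$ and vanishes when $\ell>s$, so with probability tending to one the KDE-ordering agrees with the $\omega$-ordering across supports, making the cross-support contribution to $\hat L_x\triangle L_x$ vanish.

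The core of the proof is the within-support contribution from $\K_s(h)$. Since the rescaling factor $C^\dagger_s h^{d-s}$ is common to every point in $\K_s(h)$, it preserves orderings inside $\K_s(h)$, and the problem reduces to bounding the symmetric difference between $\{y\in\K_s(h):\hat q_n(y)\leq\hat q_n(x)\}$ and $\{y\in\K_s(h):\rho(y)\leq\rho(x)\}$, where $\hat q_n=C^\dagger_s h^{d-s}\hat p_n$ converges uniformly to $\rho$ at rate $\delta_{n,h,s}$ by Theorem~\ref{thm::KDE}. When $\rho(x)$ is separated from every critical value $\rho(c)$, $c\in\mathcal{C}_s$, by more than $r_{n,h,s}$, the tangential gradient $\nabla_{T_s(x)}\rho^\dagger_s$ is bounded below on the level set $\{\rho^\dagger_s=\rho(x)\}$, and standard reach-based level-set stability (adapted to the intrinsic $s$-manifold $\overline{\K_s}$ via assumption (S)) produces a linear symmetric-difference rate $\delta_{n,h,s}$. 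When $\rho(x)$ is within $r_{n,h,s}$ of some critical value $\rho(c)$, a Morse normal form for $\rho^\dagger_s$ near $c$, combined with the same volume-versus-error optimization driving Theorem~\ref{thm::critical}, yields the degraded rate $\delta_{n,h,s}^{s/(s+1)}$.

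The hard part will be the last step: transferring Theorem~\ref{thm::critical}, which is stated for Euclidean $\R^d$ with a Morse critical point of $p$, to the intrinsic $s$-dimensional setting $(\overline{\K_s},\rho^\dagger_s)$. This requires combining the tangential gradient and Hessian estimates from Theorem~\ref{thm::KDE} with a uniform Morse chart on $\K_s$ and using the positive reach provided by (S) to control the geometric discrepancy between the tangent-space constructions and the ambient KDE. A secondary but delicate point is uniformity in $x$: the two regimes must glue together across all $x\in\K_s(h)$, and this is precisely the role of the slack parameter $r_{n,h,s}$ with $\delta_{n,h,s}/r_{n,h,s}=o_P(1)$ in the statement.
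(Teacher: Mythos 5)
Your proposal follows essentially the same route as the paper's proof: the same decomposition into an empirical-process term plus a population measure of the symmetric difference of (sub)level sets, the same stratification into bad boundary layers (bounded via the size of $\K^C(h)$), cross-support pieces handled by the divergence of the rescaled KDE ratios from Theorem~\ref{thm::KDE}, and a within-support piece reduced to the nonsingular critical-level analysis of Theorem~\ref{thm::critical}/Theorem~\ref{thm::alpha_non}. The paper carries out exactly this plan (its terms (I)--(V)), so your proposal is correct and not a genuinely different argument.
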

The convergence rate in Theorem~\ref{thm::alpha::point} is similar to
that in Theorem~\ref{thm::KDE}: for a given point $x$
when $\alpha(x)$ is away from the $\alpha$ value of a generalized critical point (a critical $\alpha$ level),
we have the usual convergence rate.
When $\alpha(x)$ is close to a critical $\alpha$ level,
the convergence rate is slower.
The quantity $r_{n,h,s}$ behaves like the quantity $\varpi_n$ in Theorem~\ref{thm::alpha_non},
which was introduced to guarantee the uniform convergence. 
To derive the consistency of $\hat{\alpha}_n$ under the integrated error and the probability error,
we choose $r_{n,h,s} = \frac{\delta_{n,h,s}}{\log n}$,
which leads to the following theorem.

\begin{thm}[Consistency of $\alpha$-trees]
Assume (S, P2 , K1--2).

Then 
\begin{align*}
\|\hat{\alpha}_n-\alpha\|_P &= O\left(\delta_{n,h,d}\right),\\
\|\hat{\alpha}_n-\alpha\|_\mu &= O\left(\delta_{n,h,d}\right).
\end{align*}
\label{thm::alpha::Pro}
\end{thm}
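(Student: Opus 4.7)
The plan is to decompose the domain into $\K = \K(h) \cup (\K \setminus \K(h))$ and further split each stratum $\K_s(h)$ into points close to and far from a critical $\alpha$-level, then apply Theorem~\ref{thm::alpha::point} pointwise on $\K(h)$ and use the trivial bound $|\hat{\alpha}_n - \alpha| \leq 1$ on the complement. I would take the choice $r_{n,h,s} = \delta_{n,h,s}/\log n$ that Theorem~\ref{thm::alpha::point} already suggests.

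For the complement $\K \setminus \K(h)$, note that $\K \setminus \K(h) \subset \bigcup_{\ell < s \leq d}\bigl(\K_s \cap (\K_\ell \oplus h)\bigr)$. For the Lebesgue integral only $s = d$ contributes, since lower-dimensional strata have $\mu$-measure zero; using assumption (S), each $\overline{\K_\ell}$ is a smooth submanifold of positive reach, so the $d$-dimensional tubular volume is $O(h^{d-\ell})$. The crucial observation is that whenever a codimension-one stratum $\K_{d-1}$ meets $\overline{\K_d}$, the definition of $m_{\min}$ forces $m_{\min} = 1$, so the bias term $h^{2 \wedge m_{\min}} = h$ in $\delta_{n,h,d}$ already absorbs the $O(h)$ tubular contribution; when no codimension-one stratum meets $\K_d$, all tubular pieces are $O(h^2) \leq O(h^{2 \wedge m_{\min}})$. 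A parallel argument handles $\|\cdot\|_P$: by construction the restriction of $P$ to $\K_s$ has Hausdorff density $\rho$, so $P(\K_s \cap (\K_\ell \oplus h)) = O(h^{s-\ell})$, and the same $m_{\min}$-accounting yields $P(\K \setminus \K(h)) = O(h^{2 \wedge m_{\min}}) = O(\delta_{n,h,d})$.

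For the integral over $\K(h)$, I would split each $\K_s(h)$ into $F_s = \{x : \inf_{c \in \mathcal{C}_s}|\rho(x) - \rho(c)| > r_{n,h,s}\}$ and $N_s = \K_s(h) \setminus F_s$. On $F_s$, Theorem~\ref{thm::alpha::point} gives a uniform error $\delta_{n,h,s} \leq \delta_{n,h,d}$, and since $F_s$ has bounded measure, its contribution is $O(\delta_{n,h,d})$. On $N_s$ the uniform rate degrades to $\delta_{n,h,s}^{s/(s+1)}$, so I must also control the size of $N_s$. Using the generalized Morse property in (P2), the compactness of $\overline{\K_s}$ in (S), and a quadratic Taylor expansion near each nondegenerate critical point of $\rho_s^\dagger$, the $s$-dimensional Hausdorff measure of $N_s$ is $O(r_{n,h,s} + r_{n,h,s}^{s/2})$: away from $\mathcal{C}_s$ the tangential gradient is bounded below, producing an $O(r)$ slab, and within a small ball around each critical point a quadratic model gives an $O(r^{s/2})$ ellipsoidal region. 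With $r_{n,h,s} = \delta_{n,h,s}/\log n$, a direct check shows $(r_{n,h,s} + r_{n,h,s}^{s/2}) \cdot \delta_{n,h,s}^{s/(s+1)} = o(\delta_{n,h,s})$ for every $s \geq 1$ (in particular for $s = 1$ one has $r^{1/2}\delta^{1/2} \leq \delta/\sqrt{\log n}$). Summing over $s \leq d$ and using that $\delta_{n,h,s}$ is nondecreasing in $s$ (since $h < 1$ makes the variance term shrink for smaller $s$) gives the claimed $\delta_{n,h,d}$ bound; the same decomposition works verbatim for $\|\cdot\|_P$ after replacing Lebesgue volumes by $P$-masses via the Hausdorff density $\rho$.

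The main obstacle will be making the size bound on $N_s$ uniform in $c \in \mathcal{C}_s$: the quadratic Taylor expansion depends on the smallest eigenvalue of $H_s(c)$, and one needs a positive lower bound independent of $c$. Compactness of $\overline{\K_s}$ together with the Morse condition in (P2) (which excludes degenerate Hessians) forces $\mathcal{C}_s$ to be finite and yields such a uniform lower bound, so the per-critical-point estimates can be summed. A secondary subtlety is balancing the two terms $O(r)$ and $O(r^{s/2})$ in the measure of $N_s$—the $r^{s/2}$ term dominates only when $s = 1$—but the built-in $\log n$ factor in $r_{n,h,s}$ is precisely what is needed to push the resulting contribution strictly below $\delta_{n,h,s}$ in every dimension.
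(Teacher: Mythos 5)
Your proposal is correct and follows essentially the same route as the paper's proof: the same three-way partition of each stratum into the bad region $\K_\ell\setminus\K_\ell(h)$, the near-critical set, and the far-from-critical set, the same choice $r_{n,h,s}=\delta_{n,h,s}/\log n$, and the same appeal to Theorem~\ref{thm::alpha::point} plus the size bound on $\K^C(h)$. Your only deviations are cosmetic refinements — you bound the near-critical set by $O(r+r^{s/2})$ where the paper uses the cruder $O(\sqrt{r})$, and you treat the Lebesgue norm by a parallel direct argument rather than the paper's reduction via $m^*_{\min}\geq m_{\min}$ — neither of which changes the substance of the argument.
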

Theorem~\ref{thm::alpha::point} shows that 
the quantity $\alpha(x)$ can be consistently estimated by
$\hat{\alpha}_n(x)$ for the majority points.
This implies that 
the ordering of points using $\hat{p}_n$ is consistent with the ordering from $\tau,\rho$ in most areas of $\K$.



\section{Singular Measures: Critical Points and Topology}	\label{sec::topology}
Recall from Section~\ref{sec::cluster_tree} that
the topology of an $\alpha$-tree $T_\alpha$ is determined by its edge set $E(T_\alpha)$
and the relation among edges $\mathbb{C}\in E(T_\alpha)$.
The set $\mathcal{A}_\alpha$ (critical tree-levels) contains the levels where the upper level set 
$\mathbb{A}_\varpi=\{x: \alpha(x)\geq \varpi\}$
changes its shape.  
For nonsingular measures, $\mathcal{A}_\alpha$ corresponds
to the density value of some critical points. 
For singular measures, this is not true even when $\rho(x)$ is a generalized
Morse function. 

Consider the example in Figure~\ref{fig::DCP1}. 
The solid box in the left panel indicates a new type of critical points, where merges between elements in different edge sets
occur (change in the topology of level sets occurs).
By the definition of $\mathcal{A}_\alpha$, this corresponds to an element in $\mathcal{A}_\alpha$, but it is clearly not a generalized critical point.
We call this type of critical points the \emph{dimensional critical points (DCPs)}.
In Figure~\ref{fig::DCP1}, the dimension $d=2$ and we have a $2D$ spherical distribution mixed with a $1D$ singular measure that
is distributed on the red curves $\K_1$.
The bluish contours are density contours of the $2D$ spherical distribution,
the crosses are locations of local modes,
and
the solid box is the location of a DCP.
To see how the solid box changes the topology of level sets, 
we display two level sets in the middle and right panels. 
In the middle panel, the level is high and there are two connected components (the gray area and the solid curve).
In the right panel, we lower the level, and now, the two connected components merge at the location of the solid box.
Although the location of the solid box does not belong to the collection of generalized critical points $\mathcal{C}$,
this point does correspond to the merging of connected components in the level sets. 
Thus, this point corresponds to an element in $\mathcal{A}_\alpha$. 

\begin{figure}
\includegraphics[width=1.5in]{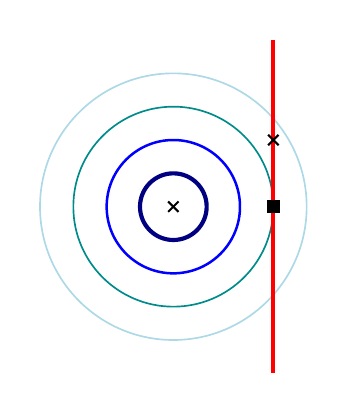}
\includegraphics[width=1.5in]{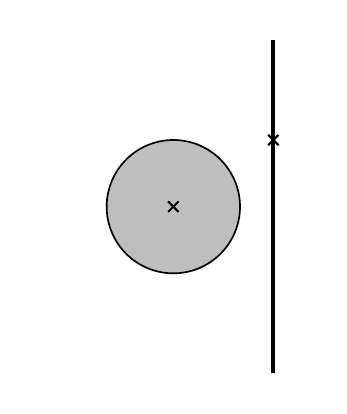}
\includegraphics[width=1.5in]{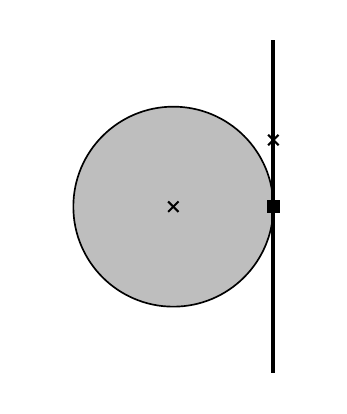}
\caption{Example of DCPs.
This is a $d=2$ case; there is a $2D$ spherical distribution mixed with
a $1D$ distribution on a line segment.
Left: the blue contours are density contours of the $2D$ spherical distribution
and the red line segment is $\K_1$, the support of the $1D$ singular distribution.
The two crosses are the density maxima at the $2D$ distribution and the $1D$ singular distribution.
The black square indicates a DCP.
To see how DCPs merge two connected components, 
we consider the middle and the right panel, which are level sets of $\alpha(x)$
at two different levels. 
Middle: the level set $\mathbb{A}_\varpi$ where the level $\varpi$ is high; 
we can see that there are two connected components (left gray-black disk and the right line segment).
Right: we move down the level a little bit; 
now the two connected components merge so there is only one connected component. 
The merging point is the square point, which is defined as a DCP.
}
\label{fig::DCP1}
\end{figure}

Here is the formal definition of the DCP.
Recall that $\mathcal{C}$ is the collection of generalized critical points of $\rho(x)$
and from equation \eqref{eq::A_f}, $\mathcal{A}_\alpha$ is the collection of levels of function $\alpha(x)$
such that the creation of a new connected component or merging of connected components occurs. 
For simplicity, we denote $\mathcal{A}=\mathcal{A}_\alpha$.
For any level $\varpi\in\mathcal{A}$,
we define
\begin{equation}
\xi(\varpi) = \begin{cases}
\max\{w\geq 0: \K_{w}\subset  \mathbb{A}_\varpi\}, \quad &\mbox{if } \K_{0}\subset \mathbb{A}_\varpi\\
-1,\quad &\mbox{if } \K_{0}\not\subset \mathbb{A}_\varpi
\end{cases}.
\end{equation}
Namely, $\xi(\varpi)$ is the highest dimensional structure that is covered by the level set $\mathbb{A}_\varpi$. 



\begin{definition}
For $\varpi\in\mathcal{A}$, we say $x$ is a DCP if the following holds
\begin{itemize}
\item[(1)] $x\in\K_\ell$ for some $\ell\leq\xi(\varpi)$.
\item[(2)] There exists an edge $\mathbb{C}\in E(T_\alpha)$ such that
	\begin{itemize}
	\item[(i)] $x\notin C$ for all $C\in\mathbb{C}$,
	\item[(ii)] $d(x,C_\epsilon)\rightarrow 0$ when $\epsilon\rightarrow 0$, where $C_\epsilon =\mathbb{C}\cap T_\alpha(\varpi+\epsilon)$.
	\end{itemize}
\end{itemize}
Note that $x$ may not exist. In such a case, there is no DCPs for level $\varpi$.
\end{definition}
The first requirement is to ensure that $x$ is on a lower dimensional support ($\K_\ell: \ell<\xi(\varpi)$).
The second requirement is to show that the DCP $x$ is not in the same edge $\mathbb{C}$, but 
its distance to the elements (connected components) of the edge is shrinking to $0$. 
By the definition of $\alpha(x)$, the first requirement implies that $x$ is contained in 
$\mathbb{A}_{\varpi+\epsilon}$ 
for a sufficiently small $\epsilon$.
Therefore, we can find $\mathbb{C}'\in E(T_\alpha)$ such that every element $C\in\mathbb{C}'$ contains $x$. 
Because $x$ is (i) in the elements of edge $\mathbb{C}'$, and (ii) not in any element of edge $\mathbb{C}$,
and because (iii) the distance from $x$ to the element of $\mathbb{C}$ converges to $0$
when the level decreases to the level $\varpi$,
$x$ is a merging point of edges $\mathbb{C}'$ and $\mathbb{C}$ 
and the level $\varpi$ is their merging level. 

Note that since DCPs occur when different dimensional regions
intersect each other, the topological structure of such an intersection
may be related to the intersection homology and stratified space \citep{edelsbrunner2008persistent,goresky1980intersection,goresky2012stratified}.

Let $\mathcal{C}^D$ be the collection of DCPs.
For a point $c\in\mathcal{C}^D$, we denote $\alpha^\dagger(c)$ as the level of $\alpha$ function
corresponding to the DCP at $c$. 
Note that $\alpha^\dagger(c)\neq \alpha(c)$ when $c$ is a DCP.
Since a DCP is generally in a lower dimensional support than the support that the merging happens,
$\alpha(c)>\alpha^\dagger(c)$.
Remark \ref{rm::ex1} provides an example of how $\alpha$ and $\alpha^\dagger$ differs.

\begin{remark}[Relation to the usual critical points]
The definition of DCPs is similar to that of saddle points or local minima, who contribute to the merging of level sets.
Saddle points (or local minima) that contribute to a merging of level sets can be defined as a point $x$ with the following properties:
\begin{itemize}
\item[(1)] $x\in\K_{\xi(\varpi)+1}$.
\item[(2)] There exists two different edges $\mathbb{C}_1,\mathbb{C}_2\in E(T_\alpha)$ such that
	\begin{itemize}
	\item[(i)] $x\notin C_1,x\notin C_2$ for all $C_1\in\mathbb{C}_1$ and $C_2\in\mathbb{C}_2$,
	\item[(ii)] $d(x,C_{1,\epsilon})\rightarrow 0$ when $\epsilon\rightarrow 0$, where $C_{1,\epsilon} =\mathbb{C}_1\cap T_\alpha(\varpi+\epsilon)$,
	\item[(iii)] $d(x,C_{2,\epsilon})\rightarrow 0$ when $\epsilon\rightarrow 0$, where $C_{2,\epsilon} =\mathbb{C}_2\cap T_\alpha(\varpi+\epsilon)$.
	\end{itemize}
\end{itemize}
It is easy to see that for a Morse function, a point $x$ with the above properties must be a saddle point or a local minimum. 
The main difference between this definition and that of DCPs is the support where $x$ lives--
if $x$ lives in a lower dimensional support $\K_\ell$, $\ell\leq \xi(\varpi)$, then it is a DCP, and
if $x$ lives in the support $\K_{\xi(\varpi)+1}$, then it is a saddle point or a local minimum.

\end{remark}

\begin{remark}	\label{rm::ex1}
Note that a DCP might be at the same position as a critical point. 
Consider the example in Figure~\ref{fig::Tree} and Example \ref{ex::point}.
In this case, 
\begin{align*}
\alpha(x) = \begin{cases}
0.7\cdot 2\cdot\Phi_0(-|x|), \quad &\mbox{if } x\neq 2\\
1, \quad &\mbox{if } x= 2
\end{cases},
\end{align*}
where $\Phi_0(x)$ is the cumulative distribution
of a standard normal.
Moreover, $\mathcal{A}_\alpha = \{1,0.7,0.0319\}$; the first element $\{1\}$ 
is the level of the point mass located at $x=2$, the second element $\{0.7\}$
is the level of the mode of the standard normal distribution,
the last element is the level where the connected components created
at levels $1$ and $0.7$ merged so it comes from a DCP.
This DCP also located at $x=2$, which coincides with a local mode. 
Note
that the number $0.0319 = 0.7\cdot 2\cdot\Phi_0(-2)$, which is the level where the merging occurred.
At the critical point $x=2$, $\alpha(2) = 1$ and $\alpha^\dagger(2) = 0.0319$.

\end{remark}

To analyze the properties of DCPs and their estimators,
we consider the following assumptions.

{\bf Assumptions.}
\begin{itemize}
\item[(A)] The elements in the collection $\mathcal{A}$ are distinct and each element corresponds 
to one critical point or one DCP, but not both. And all DCPs are distinct. 
\item[(B)] For every $x\in\partial \K_s$ ($s>0$) and $r>0$, 
there is $y\in B(x,r)\cap \K_s$ such that $\rho(y)>\rho(x)$.
\item[(C)] There exists $\eta_0>0$ such that
$$
\inf_{c\in\mathcal{C}_s}d(c, \K_\ell) \geq \eta_0,
$$
for all $\ell<s$ and $s = 1,2,\cdots,d$.
\end{itemize}

Assumption (A) is to ensure that 
no multiple topological changes will occur at the same level 
so each level corresponds to only a merging or a creation.
Assumption (B) is to guarantee that no new connected component
at the boundary of a lower dimensional manifold will be created. 
Thus, any creation of a new connected component of the level set $\mathbb{A}_\varpi$ 
occurs only at a (generalized) local mode.
Assumption (C) is to regularize (generalized) critical points so that 
they are away from lower dimensional supports.
This implies that when $h$ is sufficiently small, all critical points will be in the good region $\K(h)$.

\begin{lem}[Properties of DCPs]
Assume (S, P2, B).
The DCPs have the following properties
\begin{itemize}
\item If a new connected component of $\mathbb{A}_\varpi$ is created at $\varpi\in\mathcal{A}$, then there is a local mode $c$ of $\rho$ or an element in $\K_0$
such that $\varpi=\alpha(c).$ Namely, DCPs only merge connected components.
\item For any value $\varpi\in\mathcal{A}$, either $\varpi= \alpha(c)$ for some $c\in\mathcal{C}$ or there is a DCP associated with $\varpi$.
\end{itemize}
\label{lem::DCP::p}
\end{lem}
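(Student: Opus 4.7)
The plan is to establish both assertions by analyzing how a topological change of the super-level set $\mathbb{A}_a = \{x : \alpha(x) \geq a\}$ occurs at an element $a \in \mathcal{A}$. By Assumption (A), exactly one of two events can occur at such $a$: either a new connected component is born, or two existing components merge. The first assertion addresses the birth case directly, and the second assertion follows by combining the birth analysis with a merge analysis. Throughout, I use that $\alpha$ is a monotone transform of the ordering $\omega$, so the topology of the super-level sets of $\alpha$ coincides with that of the super-level sets of $\omega$.

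For the first assertion, let $a \in \mathcal{A}$ correspond to the creation of a new connected component, and let $x_0$ denote its peak, i.e., the point to which the new component shrinks as the level rises toward $a$. I claim $x_0$ is a local maximum of $\omega$: if some $y$ in a small neighborhood $N$ of $x_0$ had $\omega(y) > \omega(x_0) = \omega_a$, then $y$ would already lie in $\mathbb{A}_{a^+}$ for some $a^+ > a$ close to $a$, and the component containing $y$ at that level would extend to $x_0$ at level $a$, contradicting that a new component is born. By the definition of $\omega$ (rank first by $\tau$, then by $\rho$), local maximality forces either $x_0 \in \K_0 = \mathcal{C}_0$, or $x_0 \in \K_s$ for some $s \geq 1$ with (i) no lower-dimensional support $\K_{s'}$ ($s' < s$) intersecting $N$, and (ii) $x_0$ being a local maximum of $\rho^\dagger_s$ on $\K_s$. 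In case (ii), Assumption (B) excludes $x_0 \in \partial \K_s$ (boundary points admit interior neighbors with strictly larger $\rho$), so $x_0$ is an interior local maximum, giving $\nabla_{T_s(x_0)}\rho^\dagger_s(x_0) = 0$ and $x_0 \in \mathcal{C}_s$. In either case $x_0$ is a local mode of $\rho$ or an element of $\K_0$, and $a = \alpha(x_0)$, which also makes clear that DCPs cannot be responsible for births.

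For the second assertion, having handled the birth case, consider a merge level $a \in \mathcal{A}$. Pick a merge point $x_0$ on the common boundary of the merging components in $\mathbb{A}_a$; such a point satisfies $\alpha(x_0) = a$. Let $\ell$ be the unique index with $x_0 \in \K_\ell$ and distinguish according to the position of $\ell$ relative to $\xi(a)$. If $\ell > \xi(a)$, then $\K_\ell \not\subset \mathbb{A}_a$, so $x_0$ lies on the boundary of the upper level set $\{y \in \K_\ell : \rho^\dagger_\ell(y) \geq \omega_a\}$; the merging of two local components at $x_0$ is a standard saddle-like event on the smooth manifold $\overline{\K_\ell}$, which forces $\nabla_{T_\ell(x_0)}\rho^\dagger_\ell(x_0) = 0$, hence $x_0 \in \mathcal{C}_\ell \subset \mathcal{C}$ and $a = \alpha(x_0)$. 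If $\ell \leq \xi(a)$, so that $\K_\ell$ is fully contained in $\mathbb{A}_a$, then take $\mathbb{C}$ to be one of the two edges that merge at level $a$; its components live strictly above level $a$. Property (i) of the DCP definition holds because $\alpha(x_0) = a$ forces $x_0 \notin C$ for every $C \in \mathbb{C}$; property (ii) holds because the components of $\mathbb{C}$ at level $a + \epsilon$ expand toward $x_0$ as $\epsilon \downarrow 0$, so $d(x_0, C_\epsilon) \to 0$. Together with the requirement $x_0 \in \K_\ell$ with $\ell \leq \xi(a)$, this shows $x_0$ is a DCP associated with $a$.

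The main obstacle is the precise translation between the topological language of ``birth'' and ``merge'' and the edge-based formulation of the cluster tree: one must verify that a peak or merge point can always be chosen on the boundary $\{\omega = \omega_a\}$ and that the edge $\mathbb{C}$ used in the DCP construction really contains no component that includes $x_0$. Assumption (B) plays a subtle but essential role in excluding spurious birth peaks at the boundary of lower-dimensional supports, and Assumption (A) keeps the case analysis clean by ruling out simultaneous events; care is also needed to argue that at a merge point in $\K_\ell$ with $\ell > \xi(a)$, the local topology change in $\K$ genuinely reduces to a saddle-like event of $\rho^\dagger_\ell$ on $\overline{\K_\ell}$, despite the possible presence of lower-dimensional supports in the neighborhood (here Assumption (C) and the fact that a merge must involve $\K_\ell$ locally are what seal the argument).
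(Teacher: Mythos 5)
Your proposal is correct and follows essentially the same route as the paper: births are tied to local modes of $\rho$ or points of $\K_0$ via Assumption (B), and merges at level $a$ are split into two cases--components of $\K_{\xi(a)+1}$ meeting each other (a generalized saddle, hence an element of $\mathcal{C}$) versus a component hitting a lower-dimensional support (a DCP). Your write-up is in fact more detailed than the paper's own proof, which dismisses the first assertion as trivial; the only cosmetic issue is that you invoke Assumptions (A) and (C), which are not among the lemma's stated hypotheses, though neither is actually needed for the argument to go through.
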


Lemma~\ref{lem::DCP::p} provides two basic properties of DCPs. 
First, DCPs only merge connected components. 
Moreover, when the topology of connected components of $\alpha$-level sets changes (when we decrease the level), 
either a critical point or a DCP must be responsible for this. 
Theorefore, as long as we control the stability of generalized critical points and DCPs, we 
control the topology of an $\alpha$-tree.
Thus, in what follows, we will study the stability of generalized critical points and DCPs.


\begin{lem}[Stability of generalized critical points]
Assume (S, P2, K1--2, C).
Let $c\in\K_s$ be a generalized critical point with $n(c)$ negative eigenvalues of its generalized Hessian matrix.
Let $\hat{\mathcal{C}}$ be the collection of critical points of $\hat{p}_n$.
Then when $h\rightarrow 0,\frac{nh^{d+4}}{\log n}\rightarrow \infty$,
there exists
a point $\hat{c}\in \hat{\mathcal{C}}$ such that
\begin{align*}
\|\hat{c} -c\|  &= O\left(h\right) + O_P\left(\sqrt{\frac{1}{nh^{s+2}}}\right)\\
\|\hat{\alpha}_n(\hat{c}) -\alpha(c)\|  &= O_P\left((\delta_{n,h,s})^{\frac{s}{s+1}}\right)
\end{align*}
and the estimated Hessian matrix at $\hat{c}$ has $n(c)+d-s$ negative eigenvalues.
The quantity $\delta_{n,h,s}$ is defined in equation \eqref{eq::rate}.

\label{lem::g_critical}
\end{lem}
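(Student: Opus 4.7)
The plan is to show, near the generalized critical point $c \in \K_s$, that the estimated critical point $\hat c$ arises as the unique zero of $\nabla \hat p_n$ in a small neighborhood of $c$, and that its location, critical value, and signature can be read off from a joint tangent/normal expansion of $\hat p_n$. Because Assumption (C) places $c$ at distance at least $\eta_0$ from all lower-dimensional supports, $c$ lies inside the good region $\K_s(h)$ once $h$ is small enough; and because $m(c)>1$, the nearest higher-dimensional support is at distance bounded away from $0$ (so locally, within a ball of radius $O(h)$, the mass seen by the KDE is entirely contributed by the $s$-dimensional sheet $\K_s$, modulo a bias of order $h^{m(c)}$). This reduction to a pure $s$-dimensional picture is what makes the argument tractable.

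First I would work in local coordinates $(u,v)$ where $u$ parametrizes $T_s(c)$ and $v$ parametrizes $N_s(c)$, using a chart for the smooth manifold $\overline{\K_s}$ given by Assumption (S). In the tangent variable $u$, Theorem~\ref{thm::KDE} gives uniform convergence of $C_s^\dagger h^{d-s} \nabla_{T_s}\hat p_n$ and $C_s^\dagger h^{d-s}\nabla_{T_s}^2\hat p_n$ to $\nabla_{T_s}\rho^\dagger_s$ and its Hessian, at rates $\delta^{(1)}_{n,h,s}$ and $\delta^{(2)}_{n,h,s}$. Since $\rho^\dagger_s$ is a generalized Morse function and $c$ is a critical point with $n(c)$ negative eigenvalues, its tangential Hessian is non-singular with eigenvalues bounded away from $0$; a standard inverse-function/Newton linearization then produces a unique zero $u^\star$ of $\nabla_{T_s}\hat p_n$ within distance $O\!\bigl(h^{2\wedge m(c)}\bigr)+O_P\!\bigl((nh^{s+2})^{-1/2}\bigr)$ of the origin, and the tangential Hessian of $\hat p_n$ at $u^\star$ inherits the $n(c)$ negative eigenvalues.

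Second I would handle the normal direction, which is the main obstacle. Here Theorem~\ref{thm::KDE} does not directly apply because $\nabla_{N_s}\rho$ is not defined. The key observation is that, locally near $c$, the probability measure is supported on the $s$-dimensional sheet $\overline{\K_s}$ (plus a contribution from supports at combinatorial depth at least $m(c) \geq 2$, hence at Euclidean distance bounded below). Convolving the $s$-dimensional mass with the radially non-increasing kernel $K$ produces a function which, at every fixed tangential location, is strictly decreasing in $\|v\|$: writing $\hat p_n(u,v) = (nh^d)^{-1}\sum K(\|(u-U_i,v-V_i)\|/h)$, the presence of $\|v\|^2$ inside the kernel argument makes the normal Hessian dominated by a strictly negative term of order $h^{-2}$ relative to the scaled density. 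After rescaling by $C_s^\dagger h^{d-s}$, this gives $d-s$ negative eigenvalues in the normal block that are bounded away from zero, plus a tangent-normal cross term which Taylor expansion (using that $\rho^\dagger_s$ is smooth along $\overline{\K_s}$ and a smoothness-of-tangent-bundle argument from Section~\ref{sec::geometrics}) shows is $O(h)$ and thus does not interfere. Applying the implicit function theorem to the full $\R^d$ gradient then yields the unique critical point $\hat c$ with the stated displacement rate and with Hessian signature exactly $n(c)+(d-s)$.

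Finally, the $\hat\alpha_n$-value bound follows by plugging $\hat c$ into Theorem~\ref{thm::alpha::point}: since $\alpha(c)$ is a critical $\alpha$-level, the quantity $\inf_{c'\in\mathcal{C}_s}|\rho(\hat c)-\rho(c')|$ cannot be taken large, and the theorem delivers the rate $(\delta_{n,h,s})^{s/(s+1)}$. The hard part is really the normal-direction analysis, because one must prove rigorously that the data-generated KDE inherits a strict local maximum transverse to a possibly curved $s$-dimensional manifold, uniformly over the scale $h$; the cleanest route is a Taylor expansion of the convolution at scale $h$ using the positive reach of $\overline{\K_s}$ to control curvature contributions and Assumption (C) together with $m(c)>1$ to discard contamination from higher-dimensional pieces.
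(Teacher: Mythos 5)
Your proposal is correct and follows essentially the same route as the paper's proof: a tangent/normal decomposition at $c$, uniform consistency of the scaled KDE and its derivatives from Theorem~\ref{thm::KDE} combined with the Morse non-degeneracy of the generalized Hessian to locate $\hat{c}$ in the tangent directions, the observation that the kernel-smoothed mass on the $s$-dimensional sheet decreases transverse to it to supply the extra $d-s$ negative eigenvalues, and Theorem~\ref{thm::alpha::point} at the critical $\alpha$-level for the $(\delta_{n,h,s})^{s/(s+1)}$ rate. Your normal-direction analysis (the strictly negative $O(h^{-2})$ normal Hessian block and the implicit function theorem in full $\R^d$) is in fact more explicit than the paper's brief divergence argument for $h^{s-d}\nabla_{N_s(c)}\nabla_{T_s(c)}\hat{p}_n(c)$, but it is the same mechanism.
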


Lemma~\ref{lem::g_critical} is a generalization of the stability theorem of 
critical points given in Lemma 16 of \cite{chazal2014robust}.
Note that the bias is now of the order $O(h)$; this is due to the 
smoothing effect from a higher dimensional support. 




\begin{lem}[Properties of estimated critical points]
Assume (S, P2, K1--2, A, B).
Assume there are $k$ DCPs.
Let $\hat{\mathcal{C}}$ be the critical points of $\hat{p}_n$.
Define $\hat{\mathcal{G}}\subset \hat{\mathcal{C}}$ as the collection of estimated critical points 
corresponding to the generalized critical points.
Let $\hat{\mathcal{D}} =\hat{\mathcal{C}}\backslash \hat{\mathcal{G}}$ be the remaining estimated critical points.
Then when $h\rightarrow 0, \frac{nh^{d+4}}{\log n}\rightarrow \infty$, 
\begin{itemize}
\item $\hat{\mathcal{D}}\subset \K^C(h)$,
\item $\left|\hat{\mathcal{D}}\right|\geq k$, where $|A|$ for a set $A$ is the cardinality,
\item $\hat{\mathcal{D}}$ contains no local mode of $\hat{p}_n$.
\end{itemize}
\label{lem::DCP}
\end{lem}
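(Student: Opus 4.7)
The plan is to verify the three assertions in order, relying on Theorem~\ref{thm::KDE} (uniform rescaled-KDE consistency on $\K(h)$), Lemma~\ref{lem::g_critical} (stability of generalized critical points), Lemma~\ref{lem::DCP::p} (only local modes create new components of level sets), and assumptions (A), (B), (C).

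For the inclusion $\hat{\mathcal{D}}\subset \K^C(h)$, I would show that any critical point $\hat{c}$ of $\hat{p}_n$ lying in some $\K_s(h)$ must belong to $\hat{\mathcal{G}}$. If $s=d$, Theorem~\ref{thm::KDE} gives uniform convergence of $\hat{p}_n$ and of its gradient and Hessian to those of $\rho$ on $\K_d(h)$, and the standard Morse-stability argument (as in Lemma~\ref{lem::g_critical}, following Lemma 16 of \cite{chazal2014robust}) pairs $\hat{c}$ with a generalized critical point in $\K_d$. If $s<d$, the KDE near $\K_s$ concentrates a mass of order $h^{s-d}$ in the normal direction so that transversely $\hat{p}_n$ acts like a smoothed bump, forcing the normal derivatives to vanish to leading order; the tangential equation $\nabla_{T_s(\hat{c})}\hat{p}_n(\hat{c})=0$ combined with Theorem~\ref{thm::KDE} then pairs $\hat{c}$ with a generalized critical point of $\rho^\dagger_s$. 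Assumption (C) guarantees that these generalized critical points sit well inside $\K_s(h)$ for small $h$, closing the argument.

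For the lower bound $|\hat{\mathcal{D}}|\geq k$, I would exploit the topological role of each DCP. Let $c_1,\dots,c_k$ be DCPs at distinct levels $a_1,\dots,a_k\in\mathcal{A}$ (assumption (A)); each $a_j$ witnesses the merging of two distinct edges in $E(T_\alpha)$. By Theorem~\ref{thm::alpha::Pro} the sublevel sets of $\hat{\alpha}_n$ converge (in the integrated sense) to those of $\alpha$, so with probability going to one there are corresponding merging events at levels $\hat{a}_j\to a_j$ in the estimated tree, hence the upper level sets $\{\hat{p}_n\geq \hat{p}_n(\hat{c}_j)\}$ each merge two distinct connected components. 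Any such merger of a smooth function on $\R^d$ is witnessed by a saddle point $\hat{c}_j\in\hat{\mathcal{C}}$. By assumption (A) the level $a_j$ is not the $\alpha$-value of any generalized critical point, and by Lemma~\ref{lem::g_critical} elements of $\hat{\mathcal{G}}$ sit at $\hat{\alpha}_n$-levels converging to the $\alpha$-values of generalized critical points, so $\hat{c}_j\notin\hat{\mathcal{G}}$. Distinctness of the $a_j$ then gives $k$ distinct points in $\hat{\mathcal{D}}$.

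For the final claim that $\hat{\mathcal{D}}$ contains no local mode of $\hat{p}_n$, I would argue by contradiction. If $\hat{c}\in\hat{\mathcal{D}}$ were a local mode, then a new connected component of $\{\hat{p}_n\geq t\}$ appears at $t=\hat{p}_n(\hat{c})$, creating a new edge in $E(T_{\hat{\alpha}_n})$. Translating through the convergence given by Theorem~\ref{thm::alpha::Pro} and applying Lemma~\ref{lem::DCP::p} to the limiting picture, any newly created component corresponds either to a local mode of $\rho$ on some $\K_s$ or to a point in $\K_0$; both are generalized critical points and hence, by Lemma~\ref{lem::g_critical}, are paired with elements of $\hat{\mathcal{G}}$. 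So $\hat{c}\in\hat{\mathcal{G}}$, a contradiction. The main obstacle will be Step~2: rigorously locating the transverse saddle of $\hat{p}_n$ near each DCP, since $\hat{p}_n$ interpolates between the $h^{s-d}$ scale on the lower dimensional stratum and the $O(1)$ scale on the ambient stratum. The argument requires a careful transverse expansion of the KDE, but assumption (C) keeps DCPs uniformly separated from true generalized critical points, so once the transverse saddle is identified the counting is clean.
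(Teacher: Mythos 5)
Your first assertion follows the paper's route (uniform consistency of the rescaled KDE, its gradient and Hessian on $\K(h)$ from Theorem~\ref{thm::KDE}, so the full gradient of $\hat{p}_n$ can vanish in the good region only near generalized critical points, which assumption (C) keeps well inside $\K(h)$), and your extra care about the normal derivatives is reasonable. The problem is in the second and third assertions, where you replace the paper's key tool by Theorem~\ref{thm::alpha::Pro}. That theorem gives only $L_1$ and $L_1(P)$ convergence of $\hat{\alpha}_n$, and integrated convergence does not control the topology of level sets: a perturbation supported on a set of arbitrarily small measure can create or destroy connected components and shift merging levels arbitrarily, so "the sublevel sets converge in the integrated sense, hence the merging events persist at nearby levels" is not a valid inference. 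The paper instead proves a dedicated stability result for DCP levels (Lemma~\ref{lem::DCP_level}), which brackets the estimated merging level between $\hat{a}^-$ and $\hat{a}^+$ using the \emph{uniform} consistency on the good region (Theorem~\ref{thm::alpha::point}) together with the bound $P(\K^C(h))=O(h^{m_{\min}})$ from Lemma~\ref{lem::bad}; the second assertion of Lemma~\ref{lem::DCP} is then the counting consequence of that bracketing plus assumption (A). This bracketing is exactly the "careful transverse expansion" you flag as the main obstacle but do not carry out, so your Step~2 is a genuine gap rather than a routine verification.

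There is also a circularity risk in your third assertion: you rule out spurious local modes of $\hat{p}_n$ by "translating through" a correspondence between the edges of $T_{\hat{\alpha}_n}$ and $T_\alpha$, but that correspondence is essentially the topological consistency of Theorem~\ref{thm::top::p}, whose proof in the paper \emph{uses} the present lemma (specifically, the absence of local modes in $\hat{\mathcal{D}}$ is what guarantees no extra connected components in the estimated tree). The paper's argument for the third assertion avoids this by working directly with the KDE: by assumption (B) and Lemma~\ref{lem::DCP::p}, creations occur only at generalized local modes, which are already paired with elements of $\hat{\mathcal{G}}$ via Lemma~\ref{lem::g_critical}; a local mode in $\hat{\mathcal{D}}$ would have to live in the thin bad region $\K^C(h)$ by the first assertion, and the gradient and Hessian consistency of the rescaled KDE rules that out. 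You should restructure the argument so that the topology of the estimated tree is a conclusion, not an input.
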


Lemma~\ref{lem::DCP} provides several useful properties of the estimated critical points 
(critical points of $\hat{p}_n$). 
First, the estimated critical points are all in the bad region $\K^C(h)$, except for those converging to generalized critical points. 
Second, the number of estimated critical points will (asymptotically) not be less than the total number of DCPs. 
Third, all estimated local modes are estimators of generalized critical points.


\begin{lem}[Stability of critical tree-levels from DCPs]
Assume (S, P2, K1--2, A, B).
Let $c$ be a DCP and $\alpha^\dagger(c)\in\mathcal{A}$ be the associated level.
Let $\hat{\mathcal{D}}$ be defined as Lemma~\ref{lem::DCP}.
Then when $h\rightarrow 0,\frac{nh^{d+2}}{\log n}\rightarrow \infty$,
there exists
a point $\hat{c}\in \hat{\mathcal{D}}$ such that
$$
\|\hat{\alpha}_n(\hat{c}) -\alpha^\dagger(c)\|  = O\left(\delta_{n,h, \xi(\alpha_0(c))+1}\right),
$$
where $\delta_{n,h,s}$ is defined in \eqref{eq::rate}.
Moreover, the $\hat{\mathbb{A}}_{\hat{\alpha}_n(\hat{c})+\epsilon}$ and $\hat{\mathbb{A}}_{\hat{\alpha}_n(\hat{c})}$
are not topological equivalent.
\label{lem::DCP_level}
\end{lem}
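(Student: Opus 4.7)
The plan is to first pin down the geometry near the DCP and then build an estimated critical point out of it. Let $a = \alpha^\dagger(c)$ and $s = \xi(a)+1$; by construction $\K_s$ is the lowest-dimensional support not entirely contained in $\mathbb{A}_a$, while $c$ lies on some $\K_\ell$ with $\ell \le \xi(a)$. The DCP definition gives an edge $\mathbb{C} \in E(T_\alpha)$ with elements $C_\epsilon = \mathbb{C} \cap T_\alpha(a+\epsilon)$ such that $c \notin C_\epsilon$ but $d(c, C_\epsilon) \to 0$. I would first argue that the part of $C_\epsilon$ approaching $c$ must lie on $\K_s$: every lower stratum $\K_{\ell'}$ with $\ell' \le \xi(a)$ is already fully contained in $\mathbb{A}_a$, so the piece of $\mathbb{A}_{a+\epsilon}$ that ``just appeared'' near $c$ when the level dropped by $\epsilon$ cannot sit on such a $\K_{\ell'}$. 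In particular the level $\rho(c)$ is attained by $\rho^\dagger_s$ on the boundary of $\K_s$ at a \emph{non-critical} point of $\rho^\dagger_s$; the merging comes from the domain running out, not from a critical point of the higher-dimensional density.

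Next I would construct $\hat{c}$. Since $c \in \K_\ell$ with $\ell < s$, Lemma~\ref{lem::smooth::bias} gives $\hat{p}_n(c)$ of order $h^{\ell - d}$, whereas on $\K_s(h)$ the rescaled KDE $C^\dagger_s h^{d-s}\hat{p}_n$ is $\delta_{n,h,s}$-close to $\rho^\dagger_s$ by Theorem~\ref{thm::KDE}. Take a smooth path $\gamma$ from $c$ into a nearby point of $C_\epsilon$ that stays close to $\overline{\K_s}$ in its bulk. Then $\hat{p}_n \circ \gamma$ starts at scale $h^{\ell - d}$, descends into the $h^{s-d}$ scale inside the bad annulus $\K^C(h)$, and rises back to a comparable $h^{s-d}$ value once $\gamma$ reaches $C_\epsilon$. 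A one-dimensional Morse argument along $\gamma$, combined with the transverse behavior of $\hat{p}_n$ on $\K_s$, produces an index-$1$ critical point $\hat{c}$ of $\hat{p}_n$ inside $\K^C(h)$. By Lemma~\ref{lem::DCP}, $\hat{c} \in \hat{\mathcal{D}}$ and is not a local mode; Assumption (A) and Lemma~\ref{lem::g_critical} ensure it is isolated and distinct from estimators of generalized critical points.

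For the rate bound, the estimated upper level set $\hat{L}_{\hat{p}_n(\hat{c})}$ coincides, up to a small symmetric difference, with the union of all lower strata populating $\mathbb{A}_a$ and an $s$-dimensional super-level set of $\rho^\dagger_s$ at value $\rho(c)$. Because $\rho(c)$ is not a critical value of $\rho^\dagger_s$, standard (non-critical) level-set stability applies on $\K_s$: the symmetric difference in $s$-dimensional Hausdorff measure is controlled by the sup-norm error $\delta_{n,h,s}$ of the rescaled KDE on $\K_s(h)$. Combining this with the VC concentration $\|\hat{P}_n - P\|_\infty = O_P(n^{-1/2})$ on the VC class of these level sets yields $|\hat{\alpha}_n(\hat{c}) - \alpha^\dagger(c)| = \delta_{n,h,s}$. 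The topological non-equivalence of $\hat{\mathbb{A}}_{\hat{\alpha}_n(\hat{c})+\epsilon}$ and $\hat{\mathbb{A}}_{\hat{\alpha}_n(\hat{c})}$ then follows because $\hat{c}$ is an index-$1$ critical point of $\hat{p}_n$, so crossing its level merges two previously disconnected components of the estimated $\hat{\alpha}_n$-level set.

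The hard part will be step two: producing $\hat{c}$ rigorously and identifying its index. The two regimes feeding $\hat{p}_n$---the $h^{\ell - d}$ spike from $\K_\ell$ and the $h^{s-d}$ smooth contribution from $\K_s$---have very different scales, and the transition between them sits precisely in the bad annulus $\K^C(h)$ where neither Theorem~\ref{thm::KDE} nor Lemma~\ref{lem::smooth::bias} applies directly. Showing that an honest critical point of $\hat{p}_n$ with the correct index must appear in this annulus requires a careful gradient-flow or Morse-theoretic counting argument in the spirit of Lemma~16 of \cite{chazal2014robust}, and ruling out the exceptional events in which this critical point is not unique relies on Assumption (A) together with the stability estimates for generalized critical points.
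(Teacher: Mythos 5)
Your step two is where the argument actually lives, and you have left it as a placeholder. The existence of a critical point $\hat{c}$ of $\hat{p}_n$ sitting in the bad annulus $\K^C(h)$, with the right index, produced by a ``one-dimensional Morse argument along $\gamma$ combined with the transverse behavior of $\hat{p}_n$'' is precisely the content of the lemma, and none of the tools you cite (Theorem~\ref{thm::KDE}, Lemma~\ref{lem::smooth::bias}) apply inside that annulus — as you yourself note. A mountain-pass argument along a single path does not by itself yield a genuine critical point of the $d$-dimensional function $\hat{p}_n$ (it yields a critical point of the restriction to the path), and controlling the transverse directions is exactly where the two incompatible scales $h^{\ell-d}$ and $h^{s-d}$ collide. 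So as written the proposal does not prove the lemma. There is also a smaller slip: the density level on $\K_s$ relevant to the merging is $\rho^\dagger_s(c)$ (the limit of $\rho$ along $\K_s$), not $\rho(c)$, which is the $\ell$-dimensional Hausdorff density at $c$ and lives at a different scale.

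The paper avoids your hard step entirely by working at the level of $\alpha$-values rather than constructing $\hat{c}$ geometrically. It sandwiches the estimated merging level: a lower bound $\hat{a}^- = a_0(c) - \delta_{n,h,\xi(a_0(c))+1}$ comes from applying Theorem~\ref{thm::alpha::point} uniformly on $C_\epsilon\cap\K(h)$ (points outside $\K(h)$ only get their $\hat{\alpha}_n$ inflated by the lower-dimensional mass, so the bound survives), and an upper bound $\hat{a}^+ = a_0(c) + \delta_{n,h,\xi(a_0(c))+1}$ comes from bounding $\sup\hat{\alpha}_n$ on the shell $\partial(\tilde C_\ell\oplus h)$ using $P(\K^C(h)) = O(h^{m_{\min}})$ from Lemma~\ref{lem::bad}. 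Since the two components are separated at level $\hat{a}^+$ and joined at level $\hat{a}^-$, a merging occurs in the estimated tree somewhere in between, and the point realizing that merging is automatically a critical point of $\hat{p}_n$; this both defines $\hat{c}$ and delivers the rate in one stroke. Your final paragraph on the rate (non-critical level-set stability on $\K_s$ plus the $O_P(n^{-1/2})$ empirical-process term) is consistent with what the paper does via Theorem~\ref{thm::alpha::point}, but it is contingent on the unproven existence of $\hat{c}$. If you want to salvage your route, replace the path construction with the level-sandwich argument; otherwise you need to actually carry out the Morse-theoretic counting in $\K^C(h)$, which is substantially harder than the rest of the proof.
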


Lemma~\ref{lem::DCP_level} illustrates the stability of critical tree-levels from DCPs:
for every DCP, there will be an estimated critical point that corresponds to this DCP
and this estimated critical point also represents a merging of the estimated level sets.

In Lemma~\ref{lem::g_critical}, we derived the convergence rate of the estimated (generalized) critical points versus
the population critical points, but here we only derive the rate for
the critical tree-levels.
The reason is that critical points are solutions to a certain function (gradient equals to 0), 
so we can perform a Taylor expansion to obtain the convergence rate.
However, for the DCPs, they are not solutions to some functions, so it is unclear how to derive the convergence rate
for their locations.

\begin{example}[A DCP and its estimator]	\label{ex::DCP}
Consider again the example in Figure~\ref{fig::Tree} and Example \ref{ex::point}.
We have a singular distribution mixed with a point mass at $x=2$ with a probability of $0.3$ and a standard normal with a probability of $0.7$.
In this case, as indicated, a DCP is located at $x=2$ with level $0.0319$ (see Remark \ref{rm::ex1}). 
In every panel of the top row of Figure~\ref{fig::Tree}, there is a local minimum located in the region $x\in[1.5,2 ]$. 
Moreover, when we increase the sample size (from left to right), 
this local minimum is moving toward $x=2$. 
This local minimum is an estimated critical point $\hat{c}\in\hat{\mathcal{D}}$, as described in Lemma~\ref{lem::DCP_level},
whose estimated $\alpha$-level is approaching the $\alpha$-level of the DCP at $x=2$.
\end{example}




\begin{thm}[Topological error of $\alpha$-trees]
Assume (S, P2, K1--2, A, B, C).
Then when $h\rightarrow 0, \frac{nh^{d+4}}{\log n}\rightarrow \infty$,
$$
P\left(T_{\hat{\alpha}_n}\overset{T}{\approx}T_\alpha\right) \geq 1-c_0\cdot e^{-c_1\cdot nh^{d+4}},
$$
for some $c_0,c_1>0$.
\label{thm::top::p}
\end{thm}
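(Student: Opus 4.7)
The plan is to prove topological equivalence by constructing an explicit bijection between the edge sets $E(T_\alpha)$ and $E(T_{\hat\alpha_n})$ that preserves the partial order, and to show this bijection exists with the claimed exponential probability. Since the topology of a cluster tree is determined by its edge set together with the partial order, and each edge in $E(T_\alpha)$ is canonically indexed by an element of the critical tree-level set $\mathcal{A}$, it suffices to establish a level-preserving bijection between $\mathcal{A}$ and $\hat{\mathcal{A}} := \mathcal{A}_{\hat\alpha_n}$ that also respects inclusion of the associated connected components.

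First I would invoke Lemma~\ref{lem::DCP::p} under Assumption (A), (B) to decompose $\mathcal{A}$ into two disjoint pieces: levels arising from generalized critical points $c \in \mathcal{C}$ (the ``creation'' levels, plus some merging levels) and levels arising from DCPs $c \in \mathcal{C}^D$ (the remaining merging levels). For the first piece, Lemma~\ref{lem::g_critical} supplies, for each $c \in \mathcal{C}$, an estimated critical point $\hat c \in \hat{\mathcal{G}}$ with $\hat\alpha_n(\hat c) \to \alpha(c)$ at the stated rate, and with the correct Morse index (via the eigenvalue count). For the second piece, Lemma~\ref{lem::DCP_level} produces, for each DCP $c$, a matching $\hat c \in \hat{\mathcal{D}}$ whose level $\hat\alpha_n(\hat c)$ converges to $\alpha^\dagger(c)$ and at which $\hat{\mathbb{A}}_{\hat\alpha_n(\hat c)+\epsilon}$ and $\hat{\mathbb{A}}_{\hat\alpha_n(\hat c)}$ are not topologically equivalent, i.e.\ this point really does create an element of $\hat{\mathcal{A}}$. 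Together these give an injection $\mathcal{A} \hookrightarrow \hat{\mathcal{A}}$.

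The nontrivial direction is surjectivity: I must exclude spurious levels in $\hat{\mathcal{A}}$ not accounted for by $\mathcal{C} \cup \mathcal{C}^D$. Any such level would have to be generated by an estimated critical point of $\hat p_n$, so I would use Lemma~\ref{lem::DCP} to partition $\hat{\mathcal{C}} = \hat{\mathcal{G}} \cup \hat{\mathcal{D}}$. Points in $\hat{\mathcal{G}}$ are already matched. For the residual points in $\hat{\mathcal{D}}$, the lemma guarantees $\hat{\mathcal{D}} \subset \K^C(h)$, contains no estimated local modes, and $|\hat{\mathcal{D}}| \geq k = |\mathcal{C}^D|$; I would argue that within $\K^C(h)$ the excess estimated saddles/minima must pair up (cancel in pairs) and cannot create new critical tree-levels once the uniform concentration bounds in the good region $\K(h)$ from Theorem~\ref{thm::KDE} are applied, because any genuine new merging event would force a creation event by Lemma~\ref{lem::DCP::p}, contradicting the absence of new local modes in $\hat{\mathcal{D}}$ under Assumption (A). This step, showing that the ``excess'' estimated critical points in the bad region do not induce additional edges in $T_{\hat\alpha_n}$, is where I expect the main technical obstacle to lie; it will likely require tracking pre-images of each candidate level and using Assumption~(C) to keep estimators of generalized critical points well separated from the bad region.

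Finally, to get the exponential probability bound, I would combine the high-probability events required by Lemmas~\ref{lem::g_critical}, \ref{lem::DCP}, \ref{lem::DCP_level}, and the uniform concentration from Theorem~\ref{thm::KDE}. Each of these uniform bounds on $\hat p_n$ and its first two derivatives in the good region follows from the VC-type class assumption (K2) via the Talagrand/Gin\'e--Guillou inequality, producing a tail of the form $\exp(-c_1 n h^{d+4})$ multiplied by a covering number factor $c_0/h^d$. Taking a union over finitely many such events (the number of critical points and DCPs is finite under (A) and (P2)) preserves this form, yielding the stated bound $1 - (c_0/h^d)\exp(-c_1 n h^{d+4})$ after suitable redefinition of $c_0, c_1$. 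The order-preservation of the bijection on $E(T_\alpha) \to E(T_{\hat\alpha_n})$ then follows automatically from the uniform closeness of $\hat\alpha_n$ to $\alpha$ on compact subsets avoiding the critical tree-levels, together with Assumption (A) ensuring well-separated levels.
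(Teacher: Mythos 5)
Your proposal follows essentially the same route as the paper's proof: match each element of $\mathcal{A}$ to an estimated critical point via Lemmas~\ref{lem::g_critical} and~\ref{lem::DCP_level}, rule out spurious connected components and mergings via Lemma~\ref{lem::DCP} (no local modes in $\hat{\mathcal{D}}$) together with Assumption~(A) for level separation, and obtain the exponential tail from Talagrand's inequality applied to the Hessian-level concentration ($nh^{d+4}$) under (K2). The surjectivity step you flag as the main obstacle is handled in the paper at the same level of detail (it reduces the absence of spurious edges to the absence of spurious local modes plus the correct ordering of the finitely many estimated levels), so your plan is faithful to the paper's argument.
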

Theorem~\ref{thm::top::p} quantifies the topological error of the estimated $\alpha$-tree under singular measures.
The error rate is the same as that in the nonsingular measures (Theorem~\ref{cor::alpha_non}). 
The topological error bound is similar to that in Corollary \ref{cor::alpha_non}. 
Both have exponential concentration bounds
with a factor of $nh^{d+4}$, which is the Hessian estimation error rate.
The two concentration bounds are similar, because as shown in Theorem~\ref{thm::KDE}, the main
difference between singular and nonsingular measures lies in the bias part,
which will not contribute to the concentration inequality as long as $h\rightarrow0$.
The Hessian error rate is because we need to make sure the signs of eigenvalues of Hessian matrices around critical points
remain unchanged.


\begin{remark}
Theorem~\ref{thm::top::p} also implies that, under singular measures, 
the cluster tree of the KDE $\hat{p}_n$ (estimated $\lambda$-tree) converges topologically
to a population cluster tree defined by the function $\alpha(x)$.
To see this, recall that by
Lemma~\ref{lem::tree}, $T_{\hat{p}_n}\overset{T}{\approx}T_{\hat{\alpha}_n}$. This, together with Theorem~\ref{thm::top::p},
implies
$$
P\left(T_{\hat{p}_n}\overset{T}{\approx}T_\alpha\right) \geq 1- c_0\cdot e^{-c_1\cdot nh^{d+4}} \rightarrow 1
$$
under a suitable choice of $h$.
This shows that even when the population distribution is singular,
the estimated $\lambda$-tree still converges topologically to the population $\alpha$-tree.
\end{remark}

\section{Discussion}	\label{sec::discussion}

In this paper, we study how the $\alpha$-tree behaves under singular and nonsingular measures.
In the nonsingular case,
the error rate under the $L_\infty$ metric is slower than other
metrics because of the slow rate of level set estimation around saddle points.
However, other error rates are the same as estimating the $\lambda$-tree.

When a distribution is singular, 
we obtain many fruitful results for both the KDE and the estimated $\alpha$-tree.
In terms of the KDE, we prove that 
\begin{itemize}
\item[1.] the KDE is a pointwise consistent estimator after rescaling;
\item[2.] the KDE is a uniformly consistent estimator after rescaling for the majority of the support; and
\item[3.] the cluster tree from the KDE (estimated $\lambda$-tree) converges topologically to a population cluster tree defined by $\alpha$.
\end{itemize}
For the estimator $\hat{\alpha}_n(x)$ and the estimated $\alpha$-tree, we show that
\begin{itemize}
\item[1.] $\hat{\alpha}_n$ is a pointwise consistent estimator of $\alpha$;
\item[2.] $\hat{\alpha}_n$ is a uniformly consistent estimator for the majority of the support;
\item[3.] $\hat{\alpha}_n$ is a consistent estimator of $\alpha$ under the integrated distance and probability distance; and
\item[4.] the estimated $\alpha$-tree converges topologically to the population $\alpha$-tree.
\end{itemize}
Moreover, we observe a new type of critical points--the DCPs--that also contribute to the merging of
level sets for singular measures.
We study the properties of DCPs and show that the estimated critical points from the KDE approximate these DCPs. 

Finally, we point out some possible future directions.
\begin{itemize}
\item {\bf Persistence Homology.}
The cluster tree is closely related to the persistent homology of level sets \citep{fasy2014confidence,bobrowski2017topological}.
In the persistent homology, a common metric for evaluating the quality of an estimator is the bottleneck distance 
\citep{cohen2007stability,edelsbrunner2012persistent}.
Because the bottleneck distance is bounded by the $L_\infty$ metric \citep{cohen2007stability, edelsbrunner2012persistent},
many bounds on the bottleneck distance are derived via bounding the $L_\infty$ metric
\citep{fasy2014confidence,bobrowski2017topological}.
However, 
for $\alpha$-trees under singular measures, 
the $L_\infty$ metric does not converge (Corollary~\ref{cor::alpha_infty_s}) but we do have topological consistency 
(Theorem~\ref{thm::top::p}), which implies convergence in the bottleneck distance.
This provides an example where we have consistency under the bottleneck distance and inconsistency in the $L_\infty$ metric. 
How this phenomenon affects the persistence homology is unclear and we leave that line of study
for future work.

\item {\bf Higher-order Homology Groups and Stratified Space.}
Our definition of DCPs is for connected components, which are zeroth-order homology groups 
\citep{cohen2007stability,fasy2014confidence,bubenik2015statistical}
and sufficient
for analyzing cluster trees.
However, critical points also contribute to the creation and elimination of higher-order homology groups 
such as loops and voids, which are not covered in this paper.
Thus, a future direction is to study whether the KDE is also consistent in recovering higher-order homology groups under singular measures.
Moreover, as is mentioned in Remark~\ref{rm::ss},
the supports we are analyzing are related to the stratified space
\citep{friedman2014singular,goresky1980intersection}
and the DCPs might be related to the intersection homology
\citep{friedman2014singular,goresky1980intersection,edelsbrunner2008persistent}.
The intersection homology extends the definition of homology group
to a stratified space so it provides a tool to analyze higher-order homology
groups in our setting. 
Thus, finding the connection between theories of stratified space 
and the higher-order homology groups in our settings will
be another future research direction.




\item {\bf Minimax theory.}
\cite{chaudhuri2010rates,chaudhuri2014consistent} derived the minimax
theory for estimating the $\lambda$-tree under nonsingular measures and 
proved that the the k-nearest neighbor estimator is minimax.
When the distribution is nonsingular, the $\alpha$-tree and $\lambda$-tree are very similar so
we expect
the minimax theory to be the same.
However, for singular measures,
it is unclear how to derive the minimax theory so we plan to investigate this in the future. 


\end{itemize}

\section*{Acknowledgements}
We thank reviewers for their very insightful comments.
We also thank members in the CMU Topstat group for useful discussion. 



\appendix

\section{proof}

We use $g$ to denote the gradient of $p$ (when it exists)
and $\hat{g}_n$ to denote the gradient of $\hat{p}_n$.

To prove Theorem~\ref{thm::critical}, 
we will use the following lemma.
\begin{lem}
Assume $p$ satisfies (P1). 
Let $\lambda$ be the density value of a critical point and $\mathcal{C}(\lambda)$
be the collection of all critical points with density value being $\lambda$.
Let $\partial L_\lambda = \{x: p(x)=\lambda\}$. 
Then there exists $\bar{R}>0$ and $0<C_{\min}\leq C_{\max}<\infty$
such that for any positive number $r<\bar{R} $,
\begin{align*}
\inf_{x\in (\partial L_\lambda\oplus \bar{R})\backslash(\mathcal{C}(\lambda)\oplus r)} \|g(x)\|&\geq C_{\min} r,\\
\sup_{x\in \mathcal{C}(\lambda)\oplus r} \|g(x)\|&\leq C_{\max} r.
\end{align*}
\label{lem::critical}
\end{lem}
The first assertion means that the low gradient region must be around critical points
and the gradient has to be linearly increasing as we move away from a critical point.
The second assertion states that 
when we move away from a critical point, the gradient cannot increase faster than a linear rate.
Essentially, these two properties are just natural outcomes
from the fact that the density $p$ is a Morse function (Hessian matrix at critical points have non-zero eigenvalues)
and has bounded third derivatives.

\begin{proof}
{\bf First assertion.}
By the continuity of the gradient of $p$ (assumption (P1) implies bounded Hessian matrix, which implies the continuity of gradient), 
regions with low gradient must be around a critical point.

Let $c_0$ be a critical point of $\mathcal{C}(\lambda)$.
When $\bar{R}$ is sufficiently small, 
for any unit vector $\nu\in\R^d$
and any two positive numbers $r_1<r_2<\bar{R}$,
we have
$$
0<\|g(c_0+r_1\nu) \|< \|g(c_0+r_2\nu)\|.
$$
Namely, if we are moving away from a critical point along a fixed orientation, the amount gradient will be increasing
as long as we do not move too far away.
This is because the density around a critical point behaves like a quadratic function due to the Morse lemma
(see, e.g., Lemma 3.11 of \citealt{banyaga2013lectures}). 
Thus,
when $\bar{R}$ is sufficiently small and $r<\bar{R}$, 
the infimum $\inf_{x\in (\partial L_\lambda\oplus \bar{R})\backslash(\mathcal{C}(\lambda)\oplus r)} \|g(x)\|$ 
occurs at the boundary $\partial (\mathcal{C}(\lambda)\oplus r)$.
Let $x_0\in \partial (\mathcal{C}(\lambda)\oplus r)$ be the point where the infimum occurs
and $c$ be the critical point of $\mathcal{C}(\lambda)$ such that $\|x_0-c\| = r$.

Because $p$ is a Morse function, all eigenvalues of $H^2(c)$ must be non-zero. 
Thus, the fact that $p$ has a bounded third derivative (assumption (P1))
implies that when $\bar{R}$ is sufficiently small, 
all eigenvalues of $H^2(x)$ must be greater than or equal to a positive constant $C_1$ uniformly for all $x\in B(c,\bar{R})$.
Define a line $L_0(t) = t\cdot x_0 + (1-t)\cdot c$ passing through $c$ and $x_0$.
Then the gradient
\begin{align*}
g(x_0) &= \underbrace{g(c)}_{=0\mbox{ ($c$ is a critical point)}} + \int_{t=0}^{t= 1}dg(L_0(t))\\
& = \int_{t=0}^{t= 1}H(L_0(t))\underbrace{L'_0(t)}_{= x_0-c}dt,
\end{align*}
which implies
$$
\|g(x_0)\| = \left\|\int_{t=0}^{t= 1}H(L_0(t))(x_0-c)dt\right\|  \geq  \sqrt{C_1} \|x_0-c\| = C_{\min}r,
$$
which proves the first assertion.



{\bf Second assertion.}
By assumption (P1), every element of $H(x)$ is uniformly bounded
so the eigenvalues of $H^2(x)$ is also bounded.
Let $C_{\max} = \sup_x \| H(x)\|_{*}$
be the uniform bound on the spectral norm of $H(x)$.
For any point $x\in B(c,r)$,
again we define the line $L_0(t) = t\cdot x_0 + (1-t)\cdot c$ passing through $c$ and $x_0$.
Then,
\begin{align*}
g(x_0) &= \underbrace{g(c)}_{=0\mbox{ ($c$ is a critical point)}} + \int_{t=0}^{t= 1}dg(L_0(t))\\
& = \int_{t=0}^{t= 1}H(L_0(t))\underbrace{L'_0(t)}_{= x_0-c}dt
\end{align*}
and
$$
\|g(x_0)\| = \left\|\int_{t=0}^{t= 1}H(L_0(t))(x_0-c)dt\right\| \leq C_{\max} \|x-c\| \leq C_{\max} r,
$$
which completes the proof.

\end{proof}

\begin{proof}[Proof of Theorem~\ref{thm::critical}]

Let $c$ be a critical point with $\lambda = p(c)$. We denote $\delta_\infty = \|\hat{p}_n - p\|_\infty$. 
For a point $y\in \hat{L}_\lambda$,
$$
\lambda \leq \hat{p}_n(y) \leq p(y) +\delta_\infty\quad \Rightarrow\quad p(y) \geq \lambda-\delta_\infty.
$$
Thus, for a point $y\in \hat{L}_\lambda\backslash L_\lambda$, we have
$$
\lambda > p(y) \geq \lambda-\delta_\infty.
$$
Similarly, for any point $y\in L_\lambda\backslash \hat L_\lambda$,
we have
$$
\lambda+\delta_\infty > p(y) \geq \lambda.
$$
Thus,
the set difference 
$$
(\hat{L}_\lambda\triangle L_\lambda)\subset D_{\lambda,\infty} = \{x: |p(x)-\lambda|\leq \delta_\infty\},
$$
which further implies 
$$
\mu\left(\hat{L}_\lambda\triangle L_\lambda\right) \leq \mu(D_{\lambda,\infty}).
$$
So we
can bound the quantity $\mu(D_{\lambda,\infty})$ to complete the proof.

Note that by the continuity of $p(x)$ and the fact that $p$ is a Morse function (implying no flat density region), 
$\delta_\infty\rightarrow0$
implies that ${\sf Haus}(D_{\lambda,\infty}, \partial L_\lambda) \rightarrow 0$,
where $\partial L_\lambda = \{x: p(x)=\lambda\}$ is the boundary of $L_\lambda$.
Thus, Lemma~\ref{lem::critical} holds when $\delta_{\infty}$ is sufficiently small.



We partition the set $D_{\lambda,\infty}$ into two regions:
\begin{align*}
A_n &= D_{\lambda,\infty} \bigcap  B^C(c, R_n),\\
B_n &= D_{\lambda,\infty} \bigcap B(c,R_n),
\end{align*}
where $R_n$ is a number that shrinks to $0$ when $\delta_\infty\rightarrow 0$.
Later we will provide an explicit expression for $R_n$.
We only consider the case where $c$ is unique and is a saddle point; 
one can easily generalize the proof to the case of multiple critical points 
because $p$ is a Morse function so every critical point is well-separated (see, e.g., Lemma 3.2 of \citealt{banyaga2013lectures}).

The proof requires two simple geometric facts:
\begin{itemize}
\item {\bf Fact 1:}
Let $\gamma_y(t)$ be a curve starting from $y$ passing through a point $x$ at time $t_x$.
Namely, $\gamma_y(0)=y$ and $\gamma_y(t_x)=x$.
Then
$$
\|x-y\|\leq \int_0^{t_x} \|\gamma'_y(t)\|dt,
$$
i.e, the distance between two points is less than or equal to the length of any curve connecting the two points. 

\item {\bf Fact 2:}
If $\partial L_\lambda$ has finite $(d-1)$-dimensional volume (i.e., finite surface area),
then when $r_n \rightarrow 0$, 
$$
\mu\left(\partial L_\lambda \oplus r_n\right) = O(r_n),
$$
where $\mu(\cdot)$ is the Lebesgue measure.
Note that this fact is because $p$ is a Morse function
so the level set $\partial L_\lambda = \{x: p(x)=\lambda\}$
is a $(d-1)$ rectifiable set and thus the Minkowski content ($\lim_{\epsilon\rightarrow0}\frac{\mu\left(\partial L_\lambda \oplus \epsilon\right)}{2\epsilon}$)
equals to the $(d-1)$-dimensional volume.


\end{itemize}

For simplicity, we first consider the points in $A_n$ with a density value below $\lambda$.
Let $x \in A_n\cap \{y: p(y)< \lambda\}$ be one such point.
This implies $p(x)<\lambda$.
Define a density gradient (ascent) flow $\pi_x$ starting at $x$ as follows:
$$
\pi_x(0) = 0, \quad \pi_x'(t) = g(\pi_x(t)).
$$
Namely, the gradient flow $\pi_x$ is a flow starting at $x$ that follows the gradient of the density function. 
Because this flow follows the density gradient, the density along this flow $p(\pi_x(t))$ is increasing with respect to $t$.
Let $t_0$ be the time such that $p(\pi_x(t_0))= \lambda$. 
Then we have
\begin{equation}
\begin{aligned}
\lambda -p(x) &= \int_0^{t_0} \left(\frac{d p(\pi_x(t))}{dt}\right)dt \\
&=  \int_0^{t_0} g^T(\pi_x(t))\pi_x'(t)dt = \int_0^{t_0} \|g(\pi_x(t))\|^2dt
\label{eq::t1}
\end{aligned}
\end{equation}
because $\pi_x'(t) = g(\pi_x(t))$ by construction.
By the definition of $A_n$, $\lambda -p(x)\leq \delta_\infty$ so we conclude
\begin{equation}
\delta_\infty \geq\int_0^{t_0} \|g(\pi_x(t))\|^2dt.
\label{eq::t2}
\end{equation}
Now we partition the interval $[0,t_0]$ into $T_A$ and $T_B$
where $T_A = \{t: \pi_x(t)\in A_n\}$ and $T_B = \{t: \pi_x(t)\in B_n\}$. 
Denote $t_A = \int_{T_A} dt$ and $t_B = \int_{T_B}dt$ as the length of $T_A$ and $T_B$. 
Note that $t_0 = t_A+t_B$.
Using $T_A$ and $T_B$,
we rewrite
equation \eqref{eq::t2} as
\begin{equation}
\delta_\infty \geq \int_0^{t_0} \|g(\pi_x(t))\|^2dt  =\int_{T_A}\|g(\pi_x(t))\|^2dt + \int_{T_B} \|g(\pi_x(t))\|^2dt. 
\label{eq::t2_1}
\end{equation}

In the case of $T_A$, by the first assertion of Lemma~\ref{lem::critical}, 
we have a lower bound on the gradient: $\|g(\pi_x(t))\|\geq C_{\min} R_n$
for some constant $C_{\min}$.
Thus, 
\begin{equation}
\int_{T_A}\|g(\pi_x(t))\|^2dt  \geq \int_{T_A}\left(C_{\min} R_n\right)^2dt = C^2_{\min} R^2_n t_A.
\label{eq::t3_a}
\end{equation}

To bound $T_B$, for simplicity we assume $T_B=[t_*, t_{**}]$ 
(if $T_B$ contains multiple intervals, we can do the same thing for each of them). 
At the beginning of $T_B$ ($t=t_*$), $\pi_x(t_*) $ is on the boundary of the ball $B(c, R_n)$
so $g(\pi_x(t_*))\geq C_{\min} R_n$ by Lemma \ref{lem::critical}.
For $t\in [t_*, t_{**}]$, we have
\begin{equation}
\begin{aligned}
\|g(\pi_x(t))\|^2 &= \|g(\pi_x(t_*))\|^2 + \int_{s=t_*}^{s=t} \left(\frac{d \|g(\pi_x(s))\|^2}{ds}\right)ds\\
&\geq C^2_{\min} R_n^2 + \int_{s=t_*}^{s=t}2\pi'_x(s)^T H(\pi_x(s)) g(\pi_x(s)) ds\\
&=C^2_{\min} R^2_n + 2\int_{s=t_*}^{s=t}g^T(\pi_x(s)) H(\pi_x(s)) g(\pi_x(s)) ds.
\end{aligned}
\label{eq::bn}
\end{equation}
Note that in the last equality we use the property of a gradient flow: $\pi'_x(s) = g(\pi_x(s))$. 
Because $\pi_x(s) \in B(c,R_n)$ when $s\in [t_*,t_{**}]$, 
the second assertion of Lemma \ref{lem::critical} implies $\|g(\pi_x(s))\|\leq C_{\max} R_n$.
Let $\|M\|_{*}$ denotes the spectral norm of the matrix $M$.
Then
$$
g^T(\pi_x(s)) H(\pi_x(s)) g(\pi_x(s)) \geq -\sup_{y}\|H(y)\|_{*} \sup_{y\in B_n}\|g(y)\|^2 \geq - C'_{\max} R_n^2,
$$
for some constant $C'_{\max}>0$.
Note that the spectral norm is bounded because of (P1) -- the second derivative of $p$ is uniformly bounded.  
Thus, equation \eqref{eq::bn} implies
$$
\|g(\pi_x(t))\|^2 \geq C^2_{\min} R^2_n - C'_{\max} R_n^2\int_{s=t_*}^{s=t}ds = C^2_{\min} R^2_n - C'_{\max} R_n^2(t-t_*).
$$
This implies the following bound
\begin{equation}
\begin{aligned}
\int_{T_B}\|g(\pi_x(t))\|^2dt  &\geq \int_{t_*}^{t_{**}} \left(C^2_{\min} R^2_n - C'_{\max} R_n^2(t-t_*)\right)dt\\
& = C^2_{\min} R^2_n t_B - 0.5C'_{\max} R_n^2t_B^2.
\end{aligned}
\label{eq::t3_b}
\end{equation}

Putting equations \eqref{eq::t3_a} and \eqref{eq::t3_b} into equation \eqref{eq::t2_1} and using the fact that 
$t_0 = t_A+t_B$,
we obtain
\begin{align*}
\delta_\infty &\geq\int_{T_A}\|g(\pi_x(t))\|^2dt + \int_{T_B} \|g(\pi_x(t))\|^2dt\\
&\geq C^2_{\min} R^2_n t_A+ C^2_{\min} R^2_n t_B - 0.5C'_{\max} R_n^2t_B^2\\
& = C^2_{\min} R^2_n t_0 - 0.5C'_{\max} R_n^2t_B^2\\
&\geq C^2_{\min} R^2_n t_0 - 0.5C'_{\max} R_n^2t_0^2\\
&\geq C'_{\min} R^2_n t_0
\end{align*}
for some positive constant $C'_{\min}$
when $t_0\rightarrow0$ (later we will show that this is true).
This implies
\begin{equation}
t_0  = O(\delta_\infty/R_n^2).
\label{eq::t3}
\end{equation}



Let $x_\lambda = \pi_x(t_0)\in L_\lambda$ be the point where $\pi_x(t)$ intersect $L_\lambda$.
By {\bf Fact 1}, 
\begin{align*}
\|x_\lambda - x\| &\leq \int_0^{t_0} \|\pi'_x(t)\|dt\\
&\leq \sqrt{\int_0^{t_0} \|\pi'_x(t)\|^2dt \int_0^{t_0} 1^2dt} \quad \mbox{(Cauchy-Schwarz inequality)}\\
&= \sqrt{\int_0^{t_0} \|g(\pi_x(t))\|^2dt \int_0^{t_0} 1^2dt} \quad \mbox{($\pi'_x(t) = g(\pi_x(t))$)}\\
& \leq \sqrt{ \delta_\infty t_0} \quad \mbox{(equation \eqref{eq::t2})}\\
& = O(\delta_\infty/R_n)\quad \mbox{(equation \eqref{eq::t3})}.
\end{align*}
Thus, we conclude that $A_n\cap \{y: p(y)< \lambda\} \subset (\partial L_\lambda \oplus O(\delta_\infty/R_n))$.
Similarly, we can prove that $A_n\cap \{y: p(y)> \lambda\} \subset (\partial L_\lambda \oplus O(\delta_\infty/R_n))$
by considering a gradient descent flow.
So we conclude
$$
A_n \subset (\partial L_\lambda \oplus O(\delta_\infty/R_n)).
$$
Now using {\bf Fact 2} (because $p$ is a Morse function and has bounded 4 times differentiations, 
$\partial L_\lambda$ has a bounded surface area), we conclude
$$
\mu(A_n) = O(\delta_\infty/R_n)
$$
when $\delta_\infty/R_n\rightarrow 0$.

Because $B_n = D_{\lambda,\infty} \bigcap B(c,R_n)\subset B(c,R_n)$,
$$
\mu(B_n) \leq \mu(B(c,R_n)) = O(R_n^d). 
$$
Therefore, putting it altogether,
we conclude
$$
\mu\left(\hat{L}_\lambda\triangle L_\lambda\right) \leq \mu(D_{\lambda,\infty}) = \mu(A_n)+\mu(B_n) \leq O(\delta_\infty/R_n) + O(R_n^d).
$$
Choosing $R_n = \delta_\infty^{\frac{1}{d+1}}$, we obtain
$$
\mu\left(\hat{L}_\lambda\triangle L_\lambda\right) \leq O\left(\delta_\infty^{\frac{d}{d+1}}\right),
$$
which is the desired rate. 
Note that this choice of $R_n$ does satisfy the requirement in equation \eqref{eq::t3} that $t_0 = \delta_\infty/R^2_n =\delta_\infty^{\frac{d-1}{d+1}} \rightarrow 0$
when $d\geq 2$
so we have completed the proof.

\end{proof}

Before proving Theorem~\ref{thm::alpha_non}, we first note the following lemma from \cite{chazal2014robust}. 
\begin{lem}[Lemma 16 in \cite{chazal2014robust}]
Let $p$ be a density with compact support $S$. Assume that
$S$ is a $d$-dimensional compact submanifold of $\mathbb{R}^d$ with boundary. Assume $p$ is a Morse function with
finitely many, distinct, critical values with corresponding critical points $c_1,\cdots, c_k$. Also assume that $p$
is twice continuously differentiable on the interior of $S$, continuous and differentiable with non vanishing gradient on the
boundary of $S$.
Then
there exists $\epsilon_0>0$ such that for all $0<\epsilon<\epsilon_0$ the following is true: 
\begin{itemize}
\item[] for some positive
constant $c$, there exists $\eta\geq c\epsilon$ such that, for any density $q$ with support $S$ satisfying
$$
\|p-q\|_\infty < \eta, \sup_{x} \|\nabla p(x) - \nabla q(x)\|_{\max} <\eta, \sup_x \|\nabla\nabla p(x) - \nabla \nabla p(x)\|_{\max}<\eta,
$$
$q$ is a Morse function with exactly $k$ critical points $c'_1,\cdots, c'_k$ such that 
after relabeling, 
$$
\|c_\ell - c'_\ell\|<\epsilon
$$
and the number of positive eigenvalues of $\nabla\nabla p(c_\ell)$ and $\nabla \nabla q(c'_\ell)$
are the same.
\end{itemize}
\label{lem::chazal}
\end{lem}
Essentially, Lemma~\ref{lem::chazal}
shows that when the density, gradient, and Hessian of two functions
are sufficiently close, 
there exists a unique $1-1$ correspondence between critical points in the two functions. 
Thus, any critical point of $p$ corresponds
to a unique critical point of $q$. 
In the proof of Theorem~\ref{thm::alpha_non}, we will replace $q$ by the KDE $\hat{p}_n$.

\begin{proof}[Proof of Theorem~\ref{thm::alpha_non}]
Recall that $\hat{\alpha}_n(x) = \hat{P}_n\left(\hat{L}_{\hat{p}_n(x)}\right)$
and $\alpha(x) = P\left(L_{p(x)}\right)$.
The main idea for the proof is the following decomposition:
\begin{align*}
\hat{\alpha}_n(x) - \alpha(x) &= 
\underbrace{\hat{P}_n\left(\hat{L}_{\hat{p}_n(x)}\right) -P\left(\hat{L}_{\hat{p}_n(x)}\right)}_\text{(A)}\\
&\quad+\underbrace{P\left(\hat{L}_{\hat{p}_n(x)}\right) - P\left(\hat{L}_{p(x)}\right)}_\text{(B)}
+\underbrace{P\left(\hat{L}_{p(x)}\right) - P\left(L_{p(x)}\right).}_\text{(C)}
\end{align*}

We first bound part (B) and part (C).
We will use the results in these two parts to bound the case of part (A).

{\bf Part (B).}
Because
\begin{equation}
|P\left(\hat{L}_{\hat{p}_n(x)}\right) - P\left(\hat{L}_{p(x)}\right)|  \leq P\left(\hat{L}_{\hat{p}_n(x)}\triangle \hat{L}_{p(x)}\right),
\label{eq::alpha::pf::0}
\end{equation}
we first investigate a more general bound on the right hand side.
Let $\epsilon>0$ be a small number.
For the estimated level sets $\hat{L}_\lambda$ and $\hat{L}_{\lambda+\epsilon}$,
we want to control the quantity
$P(\hat{L}_\lambda\triangle \hat{L}_{\lambda+\epsilon})$
when $\epsilon\rightarrow 0$.
To obtain the bound in equation \eqref{eq::alpha::pf::0},
we choose $\lambda = p(x)$ and $\epsilon = \hat{p}_n(x)-p(x)$.

An interesting result is that $P(\hat{L}_\lambda\triangle \hat{L}_{\lambda+\epsilon})$ differs 
if $\lambda$ is a critical value (i.e. density value of a critical point) or not.
When $\lambda$ is not a critical value,
the gradient $g(x)$ has a nonzero lower bound on $\partial \hat{L}_\lambda$ and
we have the local approximation
$$
d(x, \hat{L}_{\lambda+\epsilon}) = \frac{\epsilon}{\|g(x)\|} + o(\epsilon)
$$
for each $x\in \partial \hat{L}_\lambda.$
Thus, this implies $\hat{L}_{\lambda+\epsilon}\subset \hat{L}_\lambda\oplus O(\epsilon)$
and 
the set $\hat{L}_{\lambda+\epsilon}\backslash \hat{L}_{\lambda}$ 
has Lebesgue measure $O(\epsilon)$ because of the normal compatibility property and the fact that $\partial \hat{L}_{\lambda}$ has 
finite $d-1$ volume. 
As a result,
\begin{equation}
P(\hat{L}_\lambda\triangle \hat{L}_{\lambda+\epsilon}) = P(\hat{L}_{\lambda+\epsilon}\backslash\hat{L}_\lambda) = O(\epsilon).
\label{eq::alpha::pf::1-1}
\end{equation}

When $\lambda$ coincides with a critical value, we need to split the region 
$\hat{L}_\lambda\triangle \hat{L}_{\lambda+\epsilon}$ into two subregions:
\begin{align*}
A_n &= (\hat{L}_\lambda\triangle \hat{L}_{\lambda+\epsilon}) \bigcap  B^C(c, R_\epsilon),\\
B_n &= (\hat{L}_\lambda\triangle \hat{L}_{\lambda+\epsilon}) \bigcap B(c,R_\epsilon),
\end{align*}
where $R_0$ is a nonzero constant and $R_\epsilon$ is a constant converging to $0$ when $\epsilon\rightarrow 0$.

This idea is similar to the proof of Theorem~\ref{thm::critical}.
Using the same calculation as Part $A_n$ in the proof of Theorem~\ref{thm::critical}, we obtain the rate
$$
P(A_n) = O_P\left(\frac{\epsilon}{R_\epsilon}\right).
$$
The other part $B_n$ controls the area around critical points. 
This part contributes $P(B_n) = O_P(\epsilon^d)$.
Thus, putting it all together, we have
\begin{equation}
\begin{aligned}
P(\hat{L}_\lambda\triangle \hat{L}_{\lambda+\epsilon}) &= P(A_n) + P(B_n) \\
&= O_P\left(\frac{\epsilon}{R_\epsilon}\right)+O_P(R_\epsilon^d)\\
&= O_P\left(\epsilon^{\frac{d}{d+1}}\right)
\end{aligned}
\label{eq::alpha::pf::1}
\end{equation}
when we choose $R_\epsilon = O(\epsilon^{\frac{1}{d+1}})$.

Note that the area around the critical point is at rate $\sqrt{\epsilon}$.
This is because the density at a critical point behaves quadratically, so a difference in density of $\epsilon$
results in a difference in distance of $\sqrt{\epsilon}$.
In addition, the choice $R_n = O(\epsilon^{\frac{1}{d+1}})$ is obviously slower than $\sqrt{\epsilon}$, so equation \eqref{eq::alpha::pf::1}
is valid.

Substituting equations \eqref{eq::alpha::pf::1-1} and \eqref{eq::alpha::pf::1} into equation \eqref{eq::alpha::pf::0}, 
we conclude that 
\begin{itemize}
\item if the density at $x$, $p(x)$, is not a critical value of $\hat{p},$
$$
\left|P\left(\hat{L}_{\hat{p}_n(x)}\right) - P\left(\hat{L}_{p(x)}\right)\right| = O_P\left(|\hat{p}_n(x)-p(x)|\right);
$$
\item if the density at $x$, $p(x)$, is a critical value of $\hat{p},$
$$
\left|P\left(\hat{L}_{\hat{p}_n(x)}\right) - P\left(\hat{L}_{p(x)}\right)\right| = O_P\left(|\hat{p}_n(x)-p(x)|^{\frac{d}{d+1}}\right).
$$
\end{itemize}

Finally, 
for a point $x$, 
the case where its density $p(x)$ might be a critical value of $\hat{p}_n$
occurs only when 
\begin{equation}
|p(x)-p(c)|\leq \|\hat{p}_n-p\|_\infty+O(\|\hat{g}_n-g\|_\infty^2) = O_P(\|\hat{p}_n-p\|_\infty)
\label{eq::crtical_bound}
\end{equation}
for some critical point $c$ of $p$.
To see equation \eqref{eq::crtical_bound},
first note for each critical point $c$ of $p$, there exists a critical point $\hat{c}$ of $\hat{p}_n$
that is close to $c$ due to Lemma~\ref{lem::chazal}
(this is because the gradient and Hessian of $\hat{p}_n$ are close to the gradient and Hessian of $p$, respectively). 
Then by Taylor expansion
$$
p(\hat c) - p(c) = (\hat{c}-c)^T\underbrace{g(c)}_{=0} + (\hat{c}-c)^T H(x) (\hat{c}-c) + O_P(\|\hat{c}-c\|^3) = O_P(\|\hat{c}-c\|^2).
$$
To bound the rate $\|\hat{c}-c\|$, 
note that $\hat{g}_n(\hat{c})=g(c)=0$
so
$$
\hat{g}_n(c)-g(c) = \hat{g}_n(c) - \hat{g}_n(\hat{c}) =  \hat{H}_n(c)(c-\hat{c}) + O_P(\|\hat{c}-c\|^2).
$$
Multiplying both sides by $\hat{H}^{-1}_n(c) $ from the left and take the norm, we obtain
\begin{align*}
\|\hat{c}-c\| &= \hat{H}_n^{-1}(c) (\hat{g}_n(c)-g(c))(1+o_P(1))\\ 
&= O_P\left(\|\hat{g}_n(c)-g(c)\|\right) \\
&= O_P\left(\|\hat{g}_n-g\|_\infty\right).
\end{align*}
Moreover, 
because $\|\hat{g}_n-g\|_\infty^2 = O(h^4) + O_P\left(\frac{\log n}{nh^{d+2}}\right)$
and $\|\hat{p}_n-p\|_\infty = O(h^2) + O_P\left(\sqrt{\frac{\log n}{nh^{d}}}\right)$,
the requirement on $h$: $h\rightarrow0, \frac{\log n}{nh^{d+4}}\rightarrow0$, implies $\|\hat{g}_n-g\|_\infty^2 = o_P(\|\hat{p}_n-p\|_\infty)$.
Putting it altogether, 
\begin{align*}
|\hat{p}_n(\hat{c})-p(c)|&\leq  |\hat{p}_n(\hat{c})-p(\hat{c})| + |p(\hat{c})-p(c)|\\
&\leq \|\hat{p}_n-p\|_\infty + O_P(\|\hat{c}-c\|^2)\\
& = \|\hat{p}_n-p\|_\infty + O_P(\|\hat{g}_n-g\|_\infty^2)\\
& = O_P\left(\|\hat{p}_n-p\|_\infty\right),
\end{align*}
which proves equation \eqref{eq::crtical_bound}.


Recall that $a_n$ is a sequence of $n$ such that $\|\hat{p}_n-p\|_\infty= o(a_n)$.
Then the above bound can be rewritten as
$$
\left|P\left(\hat{L}_{\hat{p}_n(x)}\right) - P\left(\hat{L}_{p(x)}\right)\right| = 
\begin{cases}
O_P\left(|\hat{p}_n(x)-p(x)|\right) \quad&\mbox{, if}\quad |p(x)-p(c)|>a_n\mbox{ for all $c\in\mathcal{C}$.}\\
O_P\left(|\hat{p}_n(x)-p(x)|^{\frac{d}{d+1}}\right) \quad&\mbox{, otherwise}.
\end{cases}
$$
This bound is uniform for all $x$ because the sequence $a_n$ does not depend on $x$.

{\bf Part (C).}
This part is simply applying Theorem~\ref{thm::critical}, which shows that
\begin{itemize}
\item if the density at $x$, $p(x)$, is not a critical value of $p$,
$$
\left|P\left(\hat{L}_{p(x)}\right) - P\left(L_{p(x)}\right)\right| = O_P\left(\|\hat{p}_n-p\|_\infty\right);
$$
\item if the density at $x$, $p(x)$, is a critical value of $p$,
$$
\left|P\left(\hat{L}_{p(x)}\right) - P\left(L_{p(x)}\right)\right| = O_P\left(\|\hat{p}_n-p\|_\infty^{\frac{d}{d+1}}\right).
$$
\end{itemize}

{\bf Part (A).}
Let $\delta_{\infty} = \|\hat{p}_n-p\|_\infty$. 
It is easy to see that 
\begin{equation}
L_{\hat{p}_n(x)+\delta_{\infty}} \subset \hat{L}_{\hat{p}_n(x)} \subset L_{\hat{p}_n(x)-\delta_{\infty}}.
\label{eq::pf::thm4-1}
\end{equation}
This is
because any point $z\in L_{\hat{p}_n(x)+\delta_{\infty}}$ satisfies $p(z)\geq\hat{p}_n(x)+\delta_{\infty}$
and thus, $\hat{p}_n(z) \geq p(z)-\delta_{\infty} \geq\hat{p}_n(x)$, which implies $z\in \hat{L}_{\hat{p}_n(x)}$.
The other case can be derived similarly. 
Equation \eqref{eq::pf::thm4-1} implies 
\begin{align*}
P(L_{\hat{p}_n(x)+\delta_{\infty}}) &\leq P(\hat{L}_{\hat{p}_n(x)} )\leq P(L_{\hat{p}_n(x)-\delta_{\infty}})\\
\hat{P}_n(L_{\hat{p}_n(x)+\delta_{\infty}}) &\leq \hat{P}_n(\hat{L}_{\hat{p}_n(x)} )\leq \hat{P}_n(L_{\hat{p}_n(x)-\delta_{\infty}})
\end{align*}
Now because the collection of set $\mathcal{L} =\{L_{\lambda}: \lambda>0\}$ has VC dimension $1$,
by VC theory (see, e.g., Theorem 2.43 in \citealt{wasserman2006all}),
$$
\sup_{\lambda} |\hat{P}_n(L_\lambda)-P(L_\lambda)| = O_P\left(\sqrt{\frac{\log n}{n}}\right).
$$
Therefore,
$$
P(L_{\hat{p}_n(x)+\delta_{\infty}})+O_P\left(\sqrt{\frac{\log n}{n}}\right) \leq \hat{P}_n(\hat{L}_{\hat{p}_n(x)} )\leq P(L_{\hat{p}_n(x)-\delta_{\infty}})+O_P\left(\sqrt{\frac{\log n}{n}}\right).
$$
This implies that 
$$
|\hat{P}_n(\hat{L}_{\hat{p}_n(x)} )-P(\hat{L}_{\hat{p}_n(x)} )| \leq P(L_{\hat{p}_n(x)-\delta_{\infty}})-P(L_{\hat{p}_n(x)+\delta_{\infty}}) + O_P\left(\sqrt{\frac{\log n}{n}}\right).
$$
By a similar derivation as part (B), we will obtain a similar bound as equations \eqref{eq::alpha::pf::1-1} and \eqref{eq::alpha::pf::1}
\begin{align*}
P(L_{\hat{p}_n(x)-\delta_{\infty}})&-P(L_{\hat{p}_n(x)+\delta_{\infty}}) 
\\
&=\begin{cases}
O_P\left(\delta_\infty\right)+O_P\left(\sqrt{\frac{\log n}{n}}\right) \,\,&\mbox{, if}\quad |\hat{p}(x)-p(c)|>\delta_\infty\mbox{ for all $c\in\mathcal{C}$.}\\
O_P\left(\delta_\infty^{\frac{d}{d+1}}\right)+O_P\left(\sqrt{\frac{\log n}{n}}\right) \,\,&\mbox{, otherwise}.
\end{cases}
\end{align*}
Note that the term $O_P\left(\sqrt{\frac{\log n}{n}}\right)$ in the above bound is of a smaller order so we can ignore them.
As a result, part (A) contributes
\begin{align*}
|\hat{P}_n(\hat{L}_{\hat{p}_n(x)} )&-P(\hat{L}_{\hat{p}_n(x)} )|\\
&=
\begin{cases}
O_P\left(\|\hat{p}_n-p\|_\infty\right) \quad&\mbox{, if}\,\, |\hat{p}(x)-p(c)|>\delta_\infty\mbox{ for all $c\in\mathcal{C}$.}\\
O_P\left(\|\hat{p}_n-p\|_\infty^{\frac{d}{d+1}}\right) \,\,&\mbox{, otherwise}.
\end{cases}
\end{align*}



Thus, putting it all together and using the fact that $\delta_\infty=\|\hat{p}_n-p\|_\infty= o_P(a_n)$, we conclude that uniformly for all $x$,
$$
\hat{\alpha}_n(x) - \alpha(x) = 
\begin{cases}
O_P\left(\|\hat{p}_n-p\|_\infty\right) \quad&\mbox{, if}\quad |p(x)-p(c)|>a_n\mbox{ for all $c\in\mathcal{C}$,}\\
O_P\left(\|\hat{p}_n-p\|_\infty^{\frac{d}{d+1}}\right) \quad&\mbox{, otherwise}.
\end{cases}
$$

\end{proof}

\begin{proof}[Proof of Lemma~\ref{lem::smooth::bias}]
{\bf Part 1: pointwise bias.}
Without loss of generality, we assume $x\in \mathring{\K}_s$. 
We first consider the case $m(x)>0$. 
In this case, there is a higher-dimensional support $\K_{s+m(x)}$ such that
$x\in \overline{\K}_{s+m(x)}$.
Thus, for any $r>0$, the ball $B(x,r)\bigcap \K_{s+m(x)} \neq \phi$.

Because the kernel function $K$ is supported on $[0,1]$,
$$
p_h(x) = \mathbb{E}\left(\hat{p}_n(x)\right) = \int\frac{1}{h^d}K\left(\frac{\|x-y\|}{h}\right)dP(y)
=\sum_{\ell=0}^d \int_{\K_\ell}\frac{1}{h^d}K\left(\frac{\|x-y\|}{h}\right)dP(y).
$$
When $h$ is sufficiently small, $B(x,h)\bigcap\K_{\ell}=\phi$ for any $\ell<s$
so the above expression can be rewritten as
$$
p_h(x) = \sum_{\ell=s}^d \int_{\K_\ell}\frac{1}{h^d}K\left(\frac{\|x-y\|}{h}\right)dP(y).
$$
Now by the definition of $m(x)$, $B(x,r)\bigcap\K_{\ell} =\phi$ for every $\ell>s$ and $\ell<s+m(x)$.
Thus, we can again rewrite $p_h(x)$ as
\begin{equation}
\begin{aligned}
p_h(x) &=  \int_{\K_s}\frac{1}{h^d}K\left(\frac{\|x-y\|}{h}\right)dP(y)\\
&\quad+ \sum_{\ell\geq s+m(x)}^d \int_{\K_\ell}\frac{1}{h^d}K\left(\frac{\|x-y\|}{h}\right)dP(y)\\
&=  \underbrace{\int_{\K_s\bigcap B(x,h)}\frac{1}{h^d}K\left(\frac{\|x-y\|}{h}\right)dP(y)}_\text{(I)}\\
&\quad+ \underbrace{\sum_{\ell\geq s+m(x)}^d \int_{\K_\ell\bigcap B(x,h)}\frac{1}{h^d}K\left(\frac{\|x-y\|}{h}\right)dP(y)}_\text{(II)}.
\end{aligned}
\label{eq::s::p_h1}
\end{equation}

Using the generalized density on the $s$-dimensional support, 
the first term equals
\begin{equation}
\begin{aligned}
(I) &= \int_{\K_s\bigcap B(x,h)}\frac{1}{h^d}K\left(\frac{\|x-y\|}{h}\right)dP(y)\\ 
&= \int_{\K_s\bigcap B(x,h)}\frac{1}{h^d}K\left(\frac{\|x-y\|}{h}\right)\rho(y)dy,
\end{aligned}
\label{eq::s::p_h2}
\end{equation}
where $dy$ is integrating with respect to $s$-dimensional areas (strictly speaking, it should be written
as a differential $s$-form but here we write $dy$ for simplicity).
Because we assume that $\rho(x)$ is at least three-times bounded differentiable, 
for any $y\in\K_s$ that is close to $x$, we have
$$
\rho(y) = \rho(x) + (y-x)^Tg_s(x) + (y-x)^T H_s(x) (y-x) + o(\|x-y\|^2),
$$
where $g_s$ and $H_s$ are the generalized gradient and Hessian matrix on $\overline{\K_s}$, respectively (see Section~2.3 of the main paper).
By substituting this into equation \eqref{eq::s::p_h2} and using the fact that $K(\|x\|)$ is symmetric and $\overline{\K_s}$ is an $s$-dimensional manifold,
we obtain
\begin{align*}
(I) &=\int_{\K_s\bigcap B(x,h)}\frac{1}{h^d}K\left(\frac{\|x-y\|}{h}\right)\rho(y)dy \\
&= \int_{x+uh\in \K_s\bigcap B(x,h)}\frac{1}{h^d}K\left(\|u\|\right)\rho(x)du\cdot h^s\\ 
&\quad + \int_{x+uh\in \K_s\bigcap B(x,h)}\frac{1}{h^d}K\left(\|u\|\right)u^T H_s(x)u du\cdot h^{s+2} 
+ o\left(\frac{h^{s+2}}{h^d}\right)\\
& = \rho(x)\cdot \int_{x+uh\in \K_s\bigcap B(x,h)}\frac{1}{h^d}K\left(\|u\|\right)du\cdot h^s + O\left(\frac{h^{s+2}}{h^d}\right).
\end{align*}
Note that again in the above integration, $du$ is integrating with respect to $s$-dimensional area (again,
it is actually a differential $s$-form).

Because $\K_s$ has positive reach, $\K_s$ around $x$ changes smoothly (if the reach is $\chi$, we can move
a ball with radius $\chi$ smoothly over every point in $K_s$). This further implies that when $\epsilon\rightarrow0$, 
the ($s$-dimensional) area of $\K_s\bigcap B(x,\epsilon)$ and the area of $B_s(0,\epsilon)$ 
are similar in the sense that 
$$
\frac{{\sf Vol}_s(\K_s\bigcap B(x,\epsilon))}{{\sf Vol}_s(B_s(0,\epsilon))} = 1+O(\epsilon^2),
$$
where ${\sf Vol}_s(A)$ is the $s$-dimensional volume of the set $A$.
Thus, we have
$$
\int_{x+uh\in \K_s\bigcap B(x,h)}K\left(\|u\|\right)du = \int_{B_s(0,1)}K\left(\|u\|\right)du(1+ O(h^2))
 = \frac{1}{C^\dagger_s}(1+O(h^2)).
$$
Using this and the fact that $\tau(x) = s$, the quantity (I) equals
$$
(I) = \frac{1}{C^\dagger_s} h^{s-d}\rho(x) + O(h^{s-d+2}) = \frac{1}{C^\dagger_{\tau(x)}} h^{\tau(x)-d}\rho(x) + O(h^{\tau(x)-d+2}).
$$

Now we bound the second quantity (II).
Because the set $\K_\ell\bigcap B(x,h)$ has $\ell$-dimensional volume $O(h^\ell)$,
$$
\int_{\K_\ell\bigcap B(x,h)}\frac{1}{h^d}K\left(\frac{\|x-y\|}{h}\right)dP(y) \leq \frac{1}{h^d}\int_{\K_\ell\bigcap B(x,h)}K(0)\rho_{\max} dy
= O\left(h^{\ell-d}\right).
$$
In addition, the smallest possible $\ell$ is $\ell= s+m(x)$.
Using this and the bound on (I), we get
\begin{align*}
p_h(x) &= \frac{1}{C^\dagger_s} h^{s-d}\rho(x) + O(h^{s-d+2}) + O(h^{s-d+m(x)})\\ 
&= \frac{1}{C^\dagger_{\tau(x)}} h^{\tau(x)-d}\rho(x) + O(h^{\tau(x)-d+2}) + O(h^{\tau(x)-d+m(x)}).
\end{align*}
Therefore, by multiplying both sides by $C^\dagger_s h^{d-\tau(x)}$, we obtain
$$
C^\dagger_{\tau(x)} h^{d-\tau(x)}\cdot p_h(x) = \rho(x) + O(h^2) + O(h^{m(x)}),
$$
which proves the first assertion.
Note that if $m(x)=0$, we will not have the second term (II), so there is no dimensional bias $O(h^{m(x)})$.

{\bf Part 2: Failure of uniform convergence of the bias.}
Without loss of generality, let $(s,\ell)$ be the two lower dimensional supports such that
$\overline{\K_\ell} \bigcap \K_s\neq \phi$ and $s<\ell$.

Let $x\in \overline{\K_\ell} \bigcap\K_s$ be a point on $\K_s$.
Then by the first assertion, we have
$$
C^\dagger_{s} h^{d-s}\cdot p_h(x) = \rho(x) + O(h^2) + O(h^{m(x)}).
$$

Now, consider a sequence of points where $h\rightarrow0$: $\{x_h\in \K_\ell: \|x_h-x\|\leq h^2\}$.
We can always find such a sequence because $\overline{\K_\ell} \bigcap\K_s\neq \phi$.
For such a sequence, the set $B(x_h,h)\bigcap K_s$ converges to the set $B(x,h)\bigcap K_s$
in the sense that 
$$
\frac{P\left((B(x_h,h)\bigcap K_s)\triangle (B(x,h)\bigcap K_s)\right)}{P\left(B(x,h)\bigcap K_s\right)} \rightarrow 0
$$
when $h\rightarrow0$.
This is because the distance between the centers of the two balls shrinks at rate $h^2$ but the radius of the ball shrinks at rate $h$.

Thus, $\frac{p_h(x_h)}{p_h(x)}\rightarrow 1$ when $h\rightarrow0$.
This implies that $C^\dagger_{s} h^{d-s} \cdot p_h(x_h)\rightarrow \rho(x)$ so
$$
C^\dagger_{\tau(x_h)} h^{d-\tau(x_h)}  \cdot p_h(x_h) = C^\dagger_{\ell} h^{d-\ell}  \cdot p_h(x_h) =  \frac{C^\dagger_{\ell}}{C^\dagger_{s}} h^{s-\ell}\cdot 
\underbrace{C^\dagger_{s} h^{d-s} \cdot p_h(x_h)}_{\rightarrow \rho(x)}
$$
diverges (since $\ell>s$ so $h^{d-\ell}$ diverges).
Thus, we do not have uniform convergence for the bias.

\end{proof}

Before we proceed to the proof of Theorem~\ref{thm::KDE}, 
we first derive a useful lemma about the variance of the KDE.

\begin{lem}[Pointwise variance]
Assume (S, P2, K1--2).
Then for $x\in \K(h)$,
\begin{align*}
{\sf Var}(\hat{p}_n(x)) &= O\left(\frac{1}{nh^{2d-\tau(x)}}\right),\\
{\sf Var}(C_{\tau(x)}h^{d-\tau(x)}\cdot \hat{p}_n(x))&= O\left(\frac{1}{nh^{\tau(x)}}\right).
\end{align*}
\label{lem::variance}
\end{lem}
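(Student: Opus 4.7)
The plan is to bound the variance directly from its definition, since the $n$ samples are i.i.d. and
\[
\Var(\hat{p}_n(x)) \;=\; \frac{1}{n h^{2d}}\,\Var\!\left(K\!\left(\frac{\|x-X_1\|}{h}\right)\right) \;\leq\; \frac{1}{n h^{2d}}\,\E\!\left[K^2\!\left(\frac{\|x-X_1\|}{h}\right)\right].
\]
So the whole job reduces to giving a sharp upper bound on the second moment $M(x,h):=\E[K^2(\|x-X_1\|/h)] = \int K^2(\|x-y\|/h)\,dP(y)$ when $x\in\K_s(h)$ with $s=\tau(x)$.

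First, I would split the integral defining $M(x,h)$ across the decomposition $\K=\bigcup_\ell \K_\ell$ exactly as in the proof of Lemma~\ref{lem::smooth::bias}. The assumption $x\in\K_s(h)$ means $B(x,h)$ is disjoint from every $\K_\ell$ with $\ell<s$, so the sum runs only over $\ell\geq s$. On the diagonal piece $\K_s\cap B(x,h)$, I pass to the $s$-dimensional Hausdorff measure via $\rho$; since $\overline{\K_s}$ is a smooth manifold with positive reach and $\rho$ is bounded above by $\rho_{\max}$ (Assumption P2), the standard manifold volume estimate gives $\mathcal{H}^s(\K_s\cap B(x,h))=O(h^s)$, and with $K$ bounded by $\|K\|_\infty$ the diagonal term is $O(h^s)$. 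For a higher-dimensional piece $\K_\ell\cap B(x,h)$ with $\ell>s$, the same argument gives volume $O(h^\ell)$, hence contribution $O(h^\ell)$; summing these is $O(h^{s+1})$, which is negligible compared to $O(h^s)$.

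Combining, $M(x,h)=O(h^{\tau(x)})$, and substituting back gives
\[
\Var(\hat{p}_n(x)) = O\!\left(\frac{h^{\tau(x)}}{n h^{2d}}\right) = O\!\left(\frac{1}{n h^{2d-\tau(x)}}\right),
\]
which is the first bound. The second bound is immediate: multiplying $\hat{p}_n(x)$ by the deterministic factor $C^\dagger_{\tau(x)} h^{d-\tau(x)}$ scales the variance by its square $\bigl(C^\dagger_{\tau(x)}\bigr)^2 h^{2(d-\tau(x))}$, which turns $n^{-1}h^{-(2d-\tau(x))}$ into $n^{-1}h^{-\tau(x)}$.

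The main obstacle is the manifold volume estimate $\mathcal{H}^s(\K_s\cap B(x,h))=O(h^s)$, and the companion $\mathcal{H}^\ell(\K_\ell\cap B(x,h))=O(h^\ell)$ for higher-dimensional pieces intersecting $B(x,h)$. Both rely on the positive reach assumption in (S): for manifolds of positive reach, small geodesic balls are comparable to Euclidean balls of the same radius (up to a constant depending on the reach), which lets me replace $\mathcal{H}^s(\K_s\cap B(x,h))$ by the volume of a flat $s$-disk of radius $h$. A minor technical point is that $x\in\K_s(h)$ only guarantees separation from strictly lower-dimensional pieces, not from $\partial \K_s$; however, because $\overline{\K_s}$ is a smooth manifold with positive reach, the volume bound remains valid uniformly in $x\in\overline{\K_s}$, so the same constants work. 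Everything else is a routine application of $K$ having compact support and bounded range, together with $\rho\leq\rho_{\max}$.
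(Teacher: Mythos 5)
Your proposal is correct and follows essentially the same route as the paper: bound the variance by the second moment $\frac{1}{nh^{2d}}\E\bigl[K^2(\|x-X_1\|/h)\bigr]$, split the integral over the supports $\K_\ell$ with $\ell\geq\tau(x)$ (the lower-dimensional ones being excluded because $x\in\K(h)$), and use $\rho\leq\rho_{\max}$ together with the $O(h^\ell)$ volume of $\K_\ell\cap B(x,h)$ to get the dominant $O(h^{\tau(x)})$ term; the rescaled bound then follows by squaring the deterministic factor. Your justification of the volume estimate via positive reach is slightly more explicit than the paper's change-of-variables step, but the argument is the same.
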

\begin{proof}
Without loss of generality, let $x\in\K_s(h)$.
This implies that $B(x,h)\cap \K_\ell =\phi$ for all $\ell<s$.
By definition,
\begin{align*}
{\sf Var}(\hat{p}_n(x)) &= \mathbb{E}(\hat{p}_n(x)-\mathbb{E}(\hat{p}_n(x)))^2\\
&=\mathbb{E}\left(\frac{1}{nh^d}\sum_{i=1}^n\left(K\left(\frac{\|X_i-x\|}{h}\right) - \mathbb{E}\left(K\left(\frac{\|X_i-x\|}{h}\right)\right)\right)\right)^2\\
& = \frac{1}{n^2h^{2d}}\mathbb{E}\left(\sum_{i=1}^n\left(K^2\left(\frac{\|X_i-x\|}{h}\right) - n\left(\mathbb{E}^2\left(K\left(\frac{\|X_i-x\|}{h}\right)\right)\right)\right)\right)\\
&\leq \frac{1}{nh^{2d}} \mathbb{E}\left(K^2\left(\frac{\|X_1-x\|}{h}\right) \right)\\
& = \frac{1}{nh^{2d}} \int_{B(x,h)}K^2\left(\frac{\|y-x\|}{h}\right) dP(y)\\
& = \frac{1}{nh^{2d}}\sum_{\ell\leq s} \int_{B(x,h)\cap\K_\ell}K^2\left(\frac{\|y-x\|}{h}\right) \rho(y)dy\\
& \leq \frac{1}{nh^{2d}}\sum_{\ell\leq s} \int_{x+uh\in B(x,h)\cap\K_\ell}K^2(\|u\|)\rho_{\max} du \cdot h^{\ell}\\
& = O\left(\frac{h^s}{nh^{2d}}\right) = O\left(\frac{1}{nh^{2d-s}}\right).
\end{align*}
Note that $dy$ is integrating with respect to $s$-dimensional area and
in the last inequality, we use the transform $y=x+uh$.
Because $x+uh$ has to be on $\K_\ell$,
the change of variable gives $dy = du \cdot h^s$ (the reason why we have $h^s$ is because
both $dy$ and $du$ are differential $s$-form).
The above derivation is for the case $x\in \K_s(h)$ so $\tau(x)=s$.
This proves the first assertion and
the second assertion follows trivially from the first.

\end{proof}

\begin{proof}[Proof of Theorem~\ref{thm::KDE}]
Without loss of generality, we pick a point $x\in\K_s(h)$.
The difference
has the following decomposition:
\begin{equation}
C^{\dagger}_s h^{d-s} \cdot \hat{p}_n(x) - \rho(x) = C^{\dagger}_s h^{d-s} \cdot (\hat{p}_n(x) - p_h(x)) +C^{\dagger}_s h^{d-s} \cdot p_h(x)- \rho(x).
\end{equation}
The former part is the stochastic variation and the latter part is the bias.
The bias is uniformly controlled by Lemma~\ref{lem::smooth::bias}:
$$
C^{\dagger}_s h^{d-s} \cdot p_h(x)- \rho(x) = O(h^{2\bigwedge m_{\min}}).
$$
Thus, all we need is to control the stochastic variation. 

It is well known that the quantity $|\hat{p}_n(x)-p_h(x)|$ 
can be written as an empirical process \citep{Einmahl2005,Gine2002}
and to control the supremum of the empirical process $\sup_{x\in\K_s} C_{s}h^{d-s}\cdot|\hat{p}_n(x)-p_h(x)|$, 
we need to uniformly bound the variance.
By Lemma~\ref{lem::variance}, 
the variance is uniformly bounded at rate $O\left(\frac{1}{nh^{s}}\right)$.
Therefore, by the assumption (K2) and applying Theorem 2.3 in \cite{Gine2002}, 
we have 
$$
\sup_{x\in\K_s(h)}\left|C^{\dagger}_s h^{d-s} \cdot  (\hat{p}_n(x)-p_h(x))\right| =O_P\left(\sqrt{\frac{\log n}{nh^s}}\right),
$$
which together with the bias term proves the desired result for density estimation.

The case of the gradient and the Hessian can be proved in 
a similar way as the density estimation case so we ignore the proof.
The only difference is that the stochastic part has variance $\frac{1}{nh^{s+2}}$ and $\frac{1}{nh^{s+4}}$. 
The extra $+2$  and $+4$
in the power of $h$ come from taking the derivatives.

\end{proof}

\begin{proof}[Proof of Theorem~\ref{thm::alpha::point}]
Recall that
$$
\hat{\alpha}_n(x) = 1-\hat{P}_n(\{y: \hat{p}_n(y)\geq \hat{p}_n(x)\}).
$$
Now we consider a modified version 
$$
\tilde{\alpha}_n(x) = 1-P(\{y: \hat{p}_n(y)\geq \hat{p}_n(x)\})= 1-P(\hat{\Omega}_n(x)),
$$
where $\hat{\Omega}_n(x) = \{y: \hat{p}_n(y)\geq \hat{p}_n(x)\} = \hat{L}_{\hat{p}_n(x)}$.
The idea of the proof is to bound $|\hat{\alpha}_n(x)-\tilde{\alpha}_n(x)|$
first and then bound $|\tilde{\alpha}_n(x)-\alpha(x)|$.

{\bf Part 1: Bounding $|\hat{\alpha}_n(x)-\tilde{\alpha}_n(x)|$.}
This part can be bounded by 
a similar derivation as part (A) in the proof of Theorem \ref{thm::alpha_non}.
Here we just highlight the difference.
First, instead of using 
$L_{\hat p_n(x)+\delta_\infty}\subset \hat{\Omega}_n(x)=\hat{L}_{\hat{p}_n(x)}\subset L_{\hat p_n(x)-\delta_{\infty}}$,
we use
\begin{equation}
\mathbb{A}_{\hat \alpha_n(x)+\delta_{n,h,s}}\subset \hat{\Omega}_n(x)=\hat{L}_{\hat{p}_n(x)}\subset \mathbb{A}_{\hat\alpha_n(x)-\delta_{n,h,s}}\cup \K^C(h).
\label{eq::inc}
\end{equation}
The first inclusion ($\mathbb{A}_{\hat \alpha_n(x)+\delta_{n,h,s}}\subset \hat{L}_{\hat{p}_n}(x)$) in equation \eqref{eq::inc} can be derived as follows.
For any $y\in \mathbb{A}_{\hat \alpha_n(x)+\delta_{n,h,s}}$, by its definition $\alpha(y)\geq \hat\alpha_n(x)+\delta_{n,h,s}$. 
Thus, 
$$
\hat{\alpha}_n(y) \geq \alpha(y) - \delta_{n,h,s} \geq \hat{\alpha}_n(x).
$$
Thus, $y\in \hat{\mathbb{A}}_{\hat{\alpha}_n(x)} = \hat{L}_{\hat{p}_n(x)}$, which proves the first inclusion.
To prove 
the second inclusion ($\hat{L}_{\hat{p}_n}\subset \mathbb{A}_{\hat\alpha_n(x)-\delta_{n,h,s}}\cup \K^C(h)$),
note that if $y\in \hat{L}_{\hat{p}_n(x)}\cap \K^C(h)$, then this is automatically true so 
we focus on $y\in \hat{L}_{\hat{p}_n(x)}\backslash \K(h)$ (namely, $y\in \hat{L}_{\hat{p}_n}$ and is in good regions).
$y\in \hat{L}_{\hat{p}_n(x)}$ implies $\hat{p}_n(y)\geq \hat{p}_n(x)$, which further implies $\hat{\alpha}_n(y)\geq \hat{\alpha}_n(x)$.
Moreover, because $y\in \K^C(h) $ (no contribution from a lower dimension manifold), Theorem~\ref{thm::alpha::point} implies
$$
\alpha(y)\underbrace{\geq}_{y\in \K^C(h)} \hat{\alpha}_n(y)-\delta_{n,h,s}\underbrace{\geq}_{y\in \hat{L}_{\hat{p}_n(x)}} \hat{\alpha}_n(x)-\delta_{n,h,s},
$$
which further implies $y\in \mathbb{A}_{\hat{\alpha}_n(x)-\delta_{n,h,s}}$.
This proves the second inclusion and equation \eqref{eq::inc}.

Because the set $\mathcal{A}_1 =\{\mathbb{A}_{\varpi}:\varpi\in[0,1]\} $ and $\mathcal{A}_2 =\{\mathbb{A}_{\varpi}\cup \K^C(h):\varpi\in[0,1]\} $
both have VC dimension $1$, again we apply the VC theory (see, e.g., Theorem 2.43 in \citealt{wasserman2006all}) to bound 
\begin{align*}
|\hat{P}_n(\mathbb{A}_{\hat\alpha_n(x)+\delta_{n,h,s}})-P(\mathbb{A}_{\hat\alpha_n(x)+\delta_{n,h,s}})| &= O_P\left(\sqrt{\frac{\log n}{n}}\right)\\
|\hat{P}_n(\mathbb{A}_{\hat\alpha_n(x)-\delta_{n,h,s}}\cup \K^C(h))-P(\mathbb{A}_{\hat\alpha_n(x)-\delta_{n,h,s}}\cup \K^C(h))| &= O_P\left(\sqrt{\frac{\log n}{n}}\right).
\end{align*}
Finally, Lemma \ref{lem::bad} implies $P(\K^C(h)) = O(h^{2\bigwedge m_{\min}})$ and thus,
\begin{align*}
P(\mathbb{A}_{\hat\alpha_n(x)-\delta_{n,h,s}}&\cup \K^C(h)) - P(\mathbb{A}_{\hat\alpha_n(x)+\delta_{n,h,s}}) \\
&\leq P(\mathbb{A}_{\hat\alpha_n(x)-\delta_{n,h,s}}) - P(\mathbb{A}_{\hat\alpha_n(x)+\delta_{n,h,s}})  +P(\K^C(h))\\
&=P(\mathbb{A}_{\hat\alpha_n(x)-\delta_{n,h,s}}) - P(\mathbb{A}_{\hat\alpha_n(x)+\delta_{n,h,s}})  +O(h^{2\bigwedge m_{\min}}).
\end{align*}
By definition of the $\alpha$-level set $\mathbb{A}_{\varpi} = \{y: \alpha(y)\geq \varpi\}$, $P(\mathbb{A}_{\varpi}) = 1-\varpi$ (except for the case that 
$x\in\K_0$ is located at a point mass; but this will be a trivial case that the result holds).
Therefore,
$$
P(\mathbb{A}_{\hat\alpha_n(x)-\delta_{n,h,s}}) - P(\mathbb{A}_{\hat\alpha_n(x)+\delta_{n,h,s}}) = 2 \delta_{n,h,s}.
$$
Putting all these elements into the part (A) of the proof of Theorem \ref{thm::alpha_non}, 
we conclude that
$$
|\hat{P}_n(\hat{\Omega}_n(x))-P(\hat{\Omega}_n(x))| = 2 \delta_{n,h,s} + O_P\left(\sqrt{\frac{\log n}{n}}\right)+O(h^{2\bigwedge m_{\min}})
=O(\delta_{n,h,s}).
$$


{\bf Part 2: Bounding $|\tilde{\alpha}_n(x)-\alpha(x)|$.}
By definition of $\alpha(x)$, 
\begin{equation}
\begin{aligned}
1&-\alpha(x)\\ 
&= P(\{y: \alpha(y)\geq \alpha(x)\})\\ 
&= P\left(\{y: \tau(y)< \tau(x)\}\bigcup\{y: \tau(y)=\tau(x),\,\, \rho(y)\geq \rho(x)\}\right)\\
& = P\left(\{y: \tau(y)< \tau(x)\}\right) + P\left(\{y: \tau(y)=\tau(x),\,\, \rho(y)\geq \rho(x)\}\right)\\
& = P(\Omega(x)) + P(D(x)),
\end{aligned}
\end{equation}
where $\Omega(x) = \{y: \tau(y)< \tau(x)\}$ and $D(x)=\{y: \tau(y)=\tau(x),\,\, \rho(y)\geq \rho(x)\}$.
We define $E(x) = \{y: \tau(y)=\tau(x),\,\, \rho(y)< \rho(x)\}$ and $\Phi(x) = \{y: \tau(y)> \tau(x)\}$,
and then $\Omega(x), D(x), E(x), \Phi(x)$ form a partition of $\K$.

Using the fact that $\Omega(x), D(x), E(x), \Phi(x)$ is a partition of $\K$, we bound the difference
\begin{equation}
\begin{aligned}
|\alpha(x) - \tilde{\alpha}_n(x)| &= P\left(\hat{\Omega}_n(x)\triangle (\Omega(x)\cup D(x))\right)\\
& = P\left((\Omega(x)\cup D(x))\backslash \hat{\Omega}_n(x)\right) + P\left(\hat{\Omega}_n(x)\backslash (\Omega(x)\cup D(x))\right)\\
& = P\left(\Omega(x)\backslash \hat{\Omega}_n(x)\right) \\
&\quad+ P\left(D(x)\backslash \hat{\Omega}_n(x)\right) + P\left(\hat{\Omega}_n(x)\cap (E(x)\cup \Phi(x))\right)\\
& = P\left(\Omega(x)\backslash \hat{\Omega}_n(x)\right) + P\left(D(x)\backslash \hat{\Omega}_n(x)\right)\\ 
&\quad+ P\left(\hat{\Omega}_n(x)\cap E(x)\right)+ P\left(\hat{\Omega}_n(x)\cap \Phi(x)\right)\\
& \leq \underbrace{P\left(\left(\Omega(x)\backslash \hat{\Omega}_n(x)\right)\cap \K(h)\right)}_\text{(I)} 
+\underbrace{P\left(\left(D(x)\backslash \hat{\Omega}_n(x)\right)\cap \K(h)\right)}_\text{(II)}\\
&+\underbrace{P\left(\hat{\Omega}_n(x)\cap E(x)\cap \K(h)\right)}_\text{(III)}
+ \underbrace{P\left(\hat{\Omega}_n(x)\cap \Phi(x)\cap \K(h)\right)}_\text{(IV)} + \underbrace{P(\K^C(h))}_\text{(V)}.
\end{aligned}
\label{eq::AP::0}
\end{equation}

Our approach is to first control (I) and (IV) and then control (II) and (III).
Note that Lemma~\ref{lem::bad} controls the quantity (V):
\begin{equation}
(V) \leq O(h^{2\bigwedge m_{\min}}).
\label{eq::AP::V}
\end{equation}

{\bf Bounding (I) and (IV).}
Without loss of generality, we consider $x\in\K_s(h)$. 
Given a point $x\in \K_s(h)$, 
the contribution (I) is from a lower dimensional support, say $y\in\K_\ell(h)$ with $\ell<s$, 
such that $\alpha(y)>\alpha(x)$ but $\hat{p}_n(x)<\hat{p}_n(y)$. 
Namely, we wrongly estimate their ordering. 
The contribution (IV) is in a similar manner but it is for points in a higher dimensional support.
The idea of bound these two components is that 
we will show that when $h\rightarrow0, \frac{\log n}{nh^{d+2}}\rightarrow0$, 
these two components do not contribute at all with an overwhelming probability (a probability $>1-A_1 e^{-A_2\cdot nh^d}\rightarrow 1$ for some $A_1,A_2>0$).



For any point inside $\K_s(h)$, the quantity
\begin{equation}
\delta_{n,h,s}=\sup_{x\in\K_s(h)}|C^\dagger_s h^{d-s}\cdot\hat{p}_n(x)  - \rho(x)| = O(h^{2\bigwedge m_{\min}}) + O_P\left(\sqrt{\frac{\log n}{nh^s}}\right).
\label{eq::AP::2}
\end{equation}
Therefore,
$$
\hat{p}_n(x) \leq \frac{\rho(x)+\delta_{n,h,s}}{C^\dagger_s h^{d-s}}
$$
uniformly for every $x\in\K_s(h)$.

Similarly, 
for any $y\in \K_\ell(h)$ with $\ell<s$,
we obtain
$$
\hat{p}_n(y) \geq \frac{\rho(y)-\delta_{n,h,\ell}}{C^\dagger_\ell h^{d-\ell}}.
$$
Therefore, 
\begin{equation}
\sup_{y\in\K_\ell(h)}\sup_{x\in\K_s(h)}\frac{\hat{p}_n(x)}{\hat{p}_n(y)} \leq h^{s-\ell} \cdot\frac{C^\dagger_\ell}{C^\dagger_s}\cdot\frac{\rho_{\max}+\delta_{n,h,s}}{\rho_{\min}-\delta_{n,h,\ell}},
\label{eq::AP::2-2}
\end{equation}
where $\rho_{\max} $ and $\rho_{\min}$ is the maximum and minimum Hausdorff density (across various support) from assumption (P2).
Note that if the right-hand-side \eqref{eq::AP::2-2} is less than $1$,
then $\hat{p}_n(y)>\hat{p}_n(x)$ for every $y\in\K_\ell(h)$ and $x\in\K_s(h)$. 
Namely, we will be able to separate points in different dimensional supports
if these points are in the good region $\K(h)$ and equation \eqref{eq::AP::2-2} is less than $1$.

Thus, when 
\begin{equation}
\max_{\ell<s}h^{s-\ell} \cdot\frac{C^\dagger_\ell}{C^\dagger_s}\cdot\frac{\rho_{\max}+\delta_{n,h,s}}{\rho_{\min}-\delta_{n,h,\ell}}<1,
\label{eq::AP::2-3}
\end{equation}
we have 
$$
\hat{p}_n(y)>\hat{p}_n(x),\quad \forall y\in\K_\ell(h),x\in\K_s(h),\ell<s.
$$
Because 
$\Omega(x)\cap \K(h)$ is the regions within $\K(h)$
that is inside a lower dimensional support compared to the point $x$, 
equation \eqref{eq::AP::2-3} implies
$$
\left(\Omega(x)\backslash \hat{\Omega}_n(x)\right)\cap \K(h)= \emptyset
$$
and $P\left(\left(\Omega(x)\backslash \hat{\Omega}_n(x)\right)\cap \K(h)\right) = 0$.
A good news is that when $h$ is sufficiently small, equation \eqref{eq::AP::2-3}
holds whenever 
$$
\max_{\ell\leq s}\delta_{n,h,\ell} <\frac{1}{2}\rho_{\min}
$$
(note that the upper bound $\frac{1}{2}\rho_{\min}$ is just a convenient choice).
Since $\delta_{n,h,\ell}$ is just the supremum deviation of the KDE at the $\ell$-dimensional support,
by Assumption (K2) and Talagrand's inequality \citep{Gine2002,Einmahl2005}, when $h\rightarrow 0$,
$$
P(\delta_{n,h,\ell}<\rho_{\min})\geq1-A_1 e^{-A_2\cdot nh^\ell}
$$
for some constants $A_1,A_2>0$ (possibly depending on $\ell$). 
Thus, 
$$
P\left(\max_{\ell\leq s}\delta_{n,h,\ell} <\frac{1}{2}\rho_{\min}\right) \geq 1-A_3 e^{-A_4\cdot nh^s}
$$
for some constants $A_3,A_4$ (possibly depending on $s$). 
Therefore, when $nh^{s}\rightarrow \infty$ (our assumption on $h$ implies this), 
$$
P\left(\left(\Omega(x)\backslash \hat{\Omega}_n(x)\right)\cap \K(h)\right) = 0
$$
with a probability greater than or equal to $1-A_3 e^{-A_4\cdot nh^s}\rightarrow 1$.
Namely, quantity (I)$=0$ with an overwhelming probability.
Thus, we can ignore the contribution from (I). 

The similar analysis also holds for the case of (IV) since (IV) is also a contribution from wrongly estimating
the probability from a different dimensional support. 
Note that in this case, (IV)$=0$ when $\max_{s\leq \ell}\delta_{n,h,\ell} <\frac{1}{2}\rho_{\min}$, which has a probability
$$
P\left(\max_{s\leq \ell}\delta_{n,h,\ell} <\frac{1}{2}\rho_{\min}\right) \geq 1-A_5 e^{-A_6\cdot nh^d},
$$
for some constant $A_5,A_6>0$. 
Thus, the requirement on $h$ also implies that such a probability bound converges toward $1$ 
extremely fast. 
Therefore, the contribution from both (I) and (IV) are 0 with an overwhelming probability so we can ignore them
(their contribution will be of the order $O_P(1-A_5 e^{-A_6\cdot nh^d})$, which is much smaller than the other components).


 


{\bf Bounding (II) and (III).}
We consider cases of (II) and (III) together.
Again, we consider $x\in\K_s$.
Recall that bounds (II) and (III) are the probabilities within the regions
$$
\left(D(x)\backslash \hat{\Omega}_n(x)\right)\cap \K(h),\quad\hat{\Omega}_n(x)\cap E(x)\cap \K(h).
$$
Now define the region $\Psi(x;h) = \{y: \tau(y)=\tau(x)\}\cap \K(h)$.
Thus, we have
\begin{equation}
\begin{aligned}
(II)+(III) &\leq P\left(\left(\hat{\Omega}_n(x)\triangle \Omega(x)\right)\cap \Psi(x;h)\right)\\
& = P\left((\hat{\Omega}_n(x)\cap \Psi(x;h)) \triangle (\Omega(x)\cap \Psi(x;h))\right).
\end{aligned}
\end{equation}
The event 
\begin{align*}
\hat{\Omega}_n(x)\cap \Psi(x;h) &= \left\{y\in\K(h): \hat{p}_n(y)\geq \hat{p}_n(x), \tau(y)=\tau(x), x\in \K(h)\right\}\\
& = \big\{y\in\K(h): C^\dagger_sh^{d-s}\cdot\hat{p}_n(y)\geq C^\dagger_sh^{d-s}\cdot\hat{p}_n(x),\\ 
&\qquad \qquad\qquad \qquad \qquad \quad\tau(y)=\tau(x)=s, x\in \K(h)\big\},
\end{align*}
which can be viewed as the estimated density upper level set at level $\hat{p}_n(x)$ of the support $\K_{s}(h)$ (because $\tau(x)=s$).
Similarly, $\Omega(x)\cap \Psi(x;h)$ is just the upper level set at level $\rho(x)$ of the support $\K_s(h)$.
Therefore, the difference can be bounded by Theorem~\ref{thm::alpha_non}:
$$
(II)+(III) \leq 
\begin{cases}
O\left(\delta_{n,h,s}\right), \quad&\mbox{if}\quad \inf_{c\in \mathcal{C}_s}|p(x)-p(c)|>r_{n,h,s}\\
O\left((\delta_{n,h,s})^{\frac{s}{s+1}}\right), \quad&\mbox{otherwise}
\end{cases},
$$
where $r_{n,h,s}$ is a deterministic quantity such that $\delta_{n,h,s} = o_P(r_{n,h,s})$.

Thus, putting the above bound and equation \eqref{eq::AP::V} 
into equation \eqref{eq::AP::0}, we have
\begin{align*}
|\alpha(x) - \tilde{\alpha}_n(x)| &\leq (I)+(II) +(III) + (IV) +(V)\\
&\leq 
O(h^{2\bigwedge m_{\min}}) \\
&\quad+ 
\begin{cases}
O\left(\delta_{n,h,s}\right), \quad&\mbox{if}\quad \inf_{c\in \mathcal{C}_s}|p(x)-p(c)|>r_{n,h,s}\\
O\left((\delta_{n,h,s})^{\frac{s}{s+1}}\right), \quad&\mbox{otherwise}
\end{cases}\\
& = \begin{cases}
O\left(\delta_{n,h,s}\right), \quad&\mbox{if}\quad \inf_{c\in \mathcal{C}_s}|p(x)-p(c)|>r_{n,h,s}\\
O\left((\delta_{n,h,s})^{\frac{s}{s+1}}\right), \quad&\mbox{otherwise}
\end{cases},
\end{align*}
which, along with the fact that $|\hat{\alpha}_n(x)-\tilde{\alpha}_n(x)| = O\left(\delta_{n,h,s}\right)$, proves the desired result.

\end{proof}

Before we move on to the proof of Theorem~\ref{thm::alpha::Pro},
we first give a lemma that quantifies the size of good regions.

\begin{lem}[Size of good region]
Assume (S, P2, K1--2).
Define $m_{\min}$ from equation (9) of the main paper 
and $$
m^*_{\min} = d - \max\{s<d: \K_s \cap \overline{\K_d}\neq \emptyset\}.
$$
Let $\mu$ be the Lebesgue measure.
Then 
\begin{align*}
\mu(\K^C(h)) &= O(h^{ m^*_{\min}}),\\
P(\K^C(h)) &= O(h^{ m_{\min}}).
\end{align*}
\label{lem::bad}
\end{lem}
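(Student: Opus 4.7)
The plan is to decompose the bad region $\K^C(h)$ according to which lower-dimensional support is responsible for it, bound each piece via a tubular-neighborhood argument, and take the dominant exponent.

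First, directly from the definitions of $\K_s(h)$ and $\K(h)$,
\[
\K^C(h) \;=\; \bigcup_{1\leq \ell < s \leq d} \bigl(\K_s \cap (\overline{\K_\ell} \oplus h)\bigr),
\]
which is a finite union since $0\leq \ell<s\leq d$. It therefore suffices to bound each piece and then take the worst rate.

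For the Lebesgue bound, only the top-dimensional pieces ($s=d$) contribute, since each $\K_s$ with $s<d$ lies inside an $s$-dimensional smooth submanifold by (S) and hence has $d$-dimensional Lebesgue measure zero. For each $\ell<d$ with $\overline{\K_\ell}\cap \overline{\K_d}\neq \emptyset$, assumption (S) makes $\overline{\K_\ell}$ a compact $\ell$-manifold of positive reach, so for $h$ smaller than this reach the classical Federer tube formula gives
\[
\mu(\overline{\K_\ell}\oplus h) \;=\; O\bigl(h^{d-\ell}\bigr).
\]
Pairs $(\ell,d)$ with $\overline{\K_\ell}\cap \overline{\K_d}=\emptyset$ contribute nothing for $h$ smaller than the positive distance between the two compact closed sets. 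Taking the largest admissible $\ell$ (which equals $d-m^*_{\min}$ by definition of $m^*_{\min}$) yields $\mu(\K^C(h))=O(h^{m^*_{\min}})$.

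For the probability bound I would control $P\bigl(\K_s\cap(\overline{\K_\ell}\oplus h)\bigr)$ separately for each admissible pair $\ell<s$. Viewed inside $\overline{\K_s}$, this set is a thin $h$-neighborhood around the lower-dimensional stratum $\overline{\K_\ell}\cap \overline{\K_s}$, and by (S) both manifolds have positive reach, so the Federer tube formula applied intrinsically in $\overline{\K_s}$ gives
\[
\mathcal{H}^s\bigl(\K_s\cap(\overline{\K_\ell}\oplus h)\bigr) \;=\; O\bigl(h^{s-\ell}\bigr).
\]
Since $\rho\leq \rho_{\max}$ on $\K_s$ by (P2), the probability mass on $\K_s$ is bounded by $\rho_{\max}\,\mathcal{H}^s$ restricted to $\K_s$, whence
\[
P\bigl(\K_s\cap(\overline{\K_\ell}\oplus h)\bigr) \;=\; O\bigl(h^{s-\ell}\bigr).
\]
Among admissible pairs, the smallest exponent $s-\ell$ is achieved exactly by a pair realizing the infimum in \eqref{eq::mx}, i.e.\ it equals $m_{\min}$. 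Summing the finitely many contributions proves $P(\K^C(h))=O(h^{m_{\min}})$.

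The main obstacle is justifying the \emph{intrinsic} tube bound $\mathcal{H}^s\bigl(\K_s\cap(\overline{\K_\ell}\oplus h)\bigr)=O(h^{s-\ell})$: naively applying Federer's formula in $\mathbb{R}^d$ only gives $O(h^{d-\ell})$, which is too weak. One must argue that near $\overline{\K_\ell}\cap \overline{\K_s}$ the pair is a regular smooth stratification (which is guaranteed by (S) together with positive reach of each $\overline{\K_s}$), so that the $h$-neighborhood in $\mathbb{R}^d$ intersected with $\overline{\K_s}$ coincides, up to constants, with the intrinsic $h$-neighborhood of $\overline{\K_\ell}\cap\overline{\K_s}$ inside the $s$-manifold $\overline{\K_s}$. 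Once this localization is established, the correct exponent $s-\ell$ follows from the intrinsic tube formula, and the lemma reduces to finite book-keeping over strata.
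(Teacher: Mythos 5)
Your proof is correct and follows essentially the same route as the paper's: decompose the bad region over pairs of strata, bound the top-dimensional pieces for the Lebesgue claim, and bound the intrinsic $s$-dimensional measure of each thin neighborhood by $O(h^{s-\ell})$ together with $\rho\leq\rho_{\max}$ for the probability claim. The intrinsic tube bound you flag as the main obstacle is precisely the fact the paper asserts without proof as its equation \eqref{eq::leb}, so your argument matches the paper's while being more explicit about where the geometric work lies.
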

\begin{proof}

{\bf Case of the Lebesgue measure.}
By assumption (S), all supports $\K_s$ have Lebesgue measure $\mu(\K_s) = 0$ except $\K_d$.
Thus, 
$$
\mu(\K^C(h)) = \mu(\K_d^C(h)).
$$

By the definition of $m^*_{\min},$
the quantity $d-m^*_{\min} = \max\{s<d:  \K_s \cap \overline{\K_d}\neq \emptyset\}$ 
denotes the support with the largest dimension that intersects the closure of $\K_d$.

Note that for any two compact sets $A,B$ with dimensions $d(a)$ and $d(b)$ such that 
$d(b)>d(a)$ and $\overline{A}\cap\overline{B} \neq \phi$, 
then the $d(b)$-dimensional Lebesgue measure 
\begin{equation}
B\cap(A\oplus r) = O(r^{d(b)-d(a)})
\label{eq::leb}
\end{equation}
when $r\rightarrow 0$.
Thus, the set $\mu(\K_d \cap (\K_{d-m^*_{\min}}\oplus h))$ shrinks at rate $O(h^{m^*_{\min}})$.

Recall that $\K_d(h)= \K_d\backslash(\bigcup_{\ell<d}\K_\ell\oplus h)$
and $\K^C_d(h) = (\bigcup_{\ell<d}\K_\ell\oplus h)$. 
Because only sets in $\K_d$ have nonzero Lebesgue measure,
we have 
\begin{align*}
\mu(\K_d^C(h)) &= \mu(\K_d^C(h)\cap \K_d)\\
& = \mu\left(\K_d \cap (\bigcup_{\ell<d}\K_\ell\oplus h)\right)\\
& = \mu\left(\K_d \cap (\bigcup_{\ell\leq d-m^*_{\min}}\K_\ell\oplus h)\right)\\
& \leq \sum_{\ell=0}^{d-m^*_{\min}} \mu\left(\K_d \cap (\K_\ell\oplus h)\right)\\
& = O(h^{m^*_{\min}}) +o(h^{m^*_{\min}}),
\end{align*}
which proves the first assertion.


{\bf Case of the probability measure.}
The case of the probability measure is very similar to that of Lebesgue measure,
but now we also need to consider lower-dimensional supports because of the singular probability measure.

First, we expand the probability of bad regions by the following:
\begin{equation}
\begin{aligned}
P(\K^C(h)) &= 1- P(\K(h))\\
& = 1- P(\bigcup_{\ell\leq d}\K_\ell(h))\\
&=  1- \sum_{\ell=0}^d P(\K_\ell(h))\\
& = 1- \sum_{\ell=0}^d P\left(\K_\ell\backslash \left(\bigcup_{s<\ell}\K_s \oplus h\right)\right)\\
& = \sum_{\ell=0}^dP(\K_\ell)-  P\left(\K_\ell\backslash \left(\bigcup_{s<\ell}\K_s \oplus h\right)\right)\\
& = \sum_{\ell=0}^d P\left(\K_\ell\bigcap \left(\bigcup_{s<\ell}\K_s \oplus h\right)\right).
\end{aligned}
\label{eq::P::0}
\end{equation}
Note that we use the fact that $P(A)-P(A\backslash B) = P(A\cap B)$ in the last equality.
We will show that 
\begin{equation}
P\left(\K_\ell\bigcap \left(\bigcup_{s<\ell}\K_s \oplus h\right)\right) = O(h^{m_{\min}}).
\label{eq::P::1}
\end{equation}
for every $\ell$.
For simplicity, we define $\K^C_\ell(h) = \K_\ell\bigcap \left(\bigcup_{s<\ell}\K_s \oplus h\right)$.


Without loss of generality, we consider the support $\K_\ell$ and $\K^C_\ell(h)$. 
For simplicity, 
by the definition of $m_{\min}$, the largest lower-dimensional support that intersects the closure $\overline{\K_\ell}$
has a dimension lower than or equal to $\K_{\ell-m_{\min}}$.
Note that if $\ell< m_{\min}$, it is easy to see that there will be no other lower dimensional support intersecting $\overline{\K_\ell}$,
so $\K_\ell(h) = \K_\ell$ and there is nothing to prove.
Thus, the set $\K^C_\ell(h)$ can be rewritten as
$$
\K^C_\ell(h) = \K_\ell\bigcap \left(\bigcup_{s\leq\ell-m_{\min}}\K_s \oplus h\right)
$$
By equation \eqref{eq::leb}, the $\ell$-dimensional Lebesgue measure $\mu_{\ell}$ on the set $\K^C_\ell(h)$
is at rate
$$
\mu_\ell\left(\K^C_\ell(h)\right) = O(h^{m_{\min}}) +o(h^{m_{\min}}).
$$
By assumption (P2), the $\ell$-dimensional Lebesgue measure on $\K_\ell$ implies that bound on the probability measure, so we have
$$
P\left(\K^C_\ell(h)\right) = O(h^{m_{\min}}). 
$$
Because this works for every $\ell$, by equations \eqref{eq::P::0} and \eqref{eq::P::1},
we have
$$
P(\K^C(h)) = O(h^{m_{\min}}),
$$
which proves the result.


\end{proof}

\begin{proof}[Proof of Theorem~\ref{thm::alpha::Pro}]
We first note that it is easy to see that
$m^*_{\min} \geq m_{\min}$, so the rate of the integrated error is bounded by that of the probability error.
Thus, we only prove the case for the probability error here.

Because $\K_0,\cdots, \K_d$ form a partition of $\K$,
we separately analyze the probability error at each $\K_\ell$ and then join them to get the final bound.

For a support $\K_\ell$,
we partition it into three subregions $A,B$, and $C$, where 
\begin{equation}
\begin{aligned}
A &= \K_\ell^C(h) = \K_\ell\backslash \K_\ell(h)\\
B &= \K_\ell(h)\cap \{x: \min_{c\in\mathcal{C}_s}|\rho(x)-\rho(c)|\leq r_{n,h,s}\}\\
C &= \K_\ell(h)\cap \{x: \min_{c\in\mathcal{C}_s}|\rho(x)-\rho(c)|> r_{n,h,s}\},
\end{aligned}
\end{equation}
where $r_{n,h,s} = \frac{h^{2\bigwedge m_{\min}} + \frac{\log n}{nh^{s}}}{\log n}$ 
satisfies the requirement $\frac{\delta_{n,h,s}}{r_{n,h,s}} = o_P(1)$.

{\bf Case A.}
By Lemma~\ref{lem::bad}, $P(A) = O(h^{m_{\min}})$ and $|\hat{\alpha}(x)-\alpha(x)|\leq 1$.
Thus, 
\begin{equation}
\int_{A} |\hat{\alpha}_n(x)-\alpha(x)| dP(x) = O(h^{m_{\min}}).
\label{eq::alpha::A}
\end{equation}

{\bf Case B.}
For set $B$, note that the generalized density $\rho(x)$ behaves quadratically around critical points.
So a difference in density level at rate $\delta$ results in a change in the difference in distance at rate $\sqrt{\delta}$. 
Thus, 
the $\ell$-dimensional Lebesgue measure $\mu_\ell(B) =O(\sqrt{r_{n,h,s}})$,
which by assumption (P2) implies $P(B) = O(\sqrt{r_{n,h,s}})$. 
By Theorem~\ref{thm::alpha::point}, 
$|\hat{\alpha}_n(x) -\alpha(x)| = \delta_{n,h,s}^{\frac{s}{s+1}}$ uniformly for all $x\in B$.
Thus, the error is
\begin{equation}
\int_{B} |\hat{\alpha}_n(x)-\alpha(x)| dP(x) = O\left(\sqrt{r_{n,h,s}}\cdot \delta_{n,h,s}^{\frac{s}{s+1}}\right).
\label{eq::alpha::B}
\end{equation}

{\bf Case C.}
For points in this region, directly applying Theorem~\ref{thm::alpha::point} yields
\begin{equation}
\int_{C} |\hat{\alpha}_n(x)-\alpha(x)| dP(x) = O\left(\delta_{n,h,s}\right).
\label{eq::alpha::C}
\end{equation}

By adding up equations \eqref{eq::alpha::A}, \eqref{eq::alpha::B}, and \eqref{eq::alpha::C}
and using the fact that $\sqrt{r_{n,h,s}}\cdot \delta_{n,h,s}^{\frac{s}{s+1}} = o(\delta_{n,h,s})$ and $O(h^{m_{\min}})$ are
part of $\delta_{n,h,s}$, 
we obtain
\begin{equation}
\int_{\K_\ell} |\hat{\alpha}_n(x)-\alpha(x)| dP(x) = O\left(\delta_{n,h,s}\right).
\end{equation}
This works for every $\K_\ell$, which proves the desired bound.

\end{proof}

\begin{proof}[Proof of Lemma~\ref{lem::DCP::p}]

{\bf First assertion.}
By assumption (B) and (P2), the first assertion is trivially true because when we move down the level $\alpha$,
the only situation that creates a new connected component is when $\alpha$ passes through the $\alpha$-level of a local mode.

{\bf Second assertion.}
For level sets, there are only two situations where a change in topology may occur: creation of a new connected component, and
merging of two (or more) connected components.
By the first assertion, we only need to focus on the merging case.

Assume $\varpi\in\mathcal{A}$ be a level where only merging of connected components occurs.
In addition, recall that $\xi(\varpi)$ is the integer such that $\K_s\subset \mathbb{A}_\varpi$ for all $s\leq \xi(\alpha)$
and $\K_{\xi(\varpi)+1}\not\subset \mathbb{A}_\varpi$.
For a sufficiently small $\epsilon$, the difference between $\mathbb{A}_\varpi$ and $\mathbb{A}_{\varpi+\epsilon}$ 
is in $\K_{\xi(\varpi)+1}$.
Thus, when we move $\epsilon$ down, only the connected components in $\K_{\xi(\alpha)+1}$
expand. 
When merging occurs, there are only two cases:
two connected components in $\K_{\xi(\varpi)+1}$ meet each other
or a connected component in $\K_{\xi(\varpi)+1}$ hits a lower-dimensional support.
Note that there is no higher-dimensional supports in $\mathbb{A}_\varpi$ because their ordering is less than any point in $\K_{\xi(\varpi)}$.
The first case corresponds to a saddle point, which is an element in $\mathcal{C}$. 
The second case corresponds to a DCP.
Thus, $\varpi$ must be either an $\alpha$-level of a critical point or a DCP.

\end{proof}

\begin{proof}[Proof of Lemma~\ref{lem::g_critical}]

{\bf Critical points.}
Without loss of generality, let $c$ be a critical point of $\rho(x)$ on $\K_s$.
Recall that $p_h = \E(\hat{p}_n)$ is the smoothed density function and $g_h = \nabla p_h$
and $H_h = \nabla \nabla p_h$ are the corresponding gradient and Hessian.

The idea of the proof is to argue that when $\delta_{n,h,s}^{(2)}\rightarrow 0$, 
we can always find a critical point $\hat{c}$ of $\hat{p}_n$
and a critical point $c_h$ of $p_h$
such that 
$$
\|\hat{c}-c_h\|=O_P\left(\frac{1}{nh^{s+2}}\right),\quad \|c_h-c\| = O(h).
$$
Then we use the triangle inequality to bound $\|\hat{c}-c\|$.

Before we go to the details, we first define a useful set:
$$
\tilde{\K}_s(h) = \{x: x\in\K_s\oplus h,x\notin \K_\ell\oplus h, \ell<s\} = (\K_s\oplus h) \backslash \bigcup_{\ell<s} (\K_\ell\oplus h).
$$
The set $\tilde{\K}_s$ is the regions around $\K_s$ but away from a lower dimensional support
($\tilde{\K}_s$ is thicker than the `good region' $\overline{\K}_s$; $\overline{\K}_s$ does not include regions around $\K_s$).
Assumption (C) implies that $c\in \tilde{\K}_s(h)$ when $h$ is sufficiently small.
Note that any point $x\in \tilde{\K}_s$ has a distance to $\K_s$ less than $h$, i.e.,
$\sup_{x\in \tilde{\K}_s}d(x,\K_s)\leq h$.
Using the same bias analysis of $p_h$ in Lemma~\ref{lem::smooth::bias},
one can prove that when $h\rightarrow 0$
\begin{equation}
\sup_{x\in\tilde{\K}_s(h)}h^{d-s}\max\left\{p_h(x),\|g_h(x)\|_{\max}, \|H_h(x)\|_{\max}\right\} \leq U_0
\label{eq::lem12::fact1}
\end{equation}
for some fix constant $U_0$.
An intuitive explanation for Equation \eqref{eq::lem12::fact1} is that
within the set $\tilde{\K}_s(h)$, the support $\K_s$ is the dominating structure (lowest dimensional support)
so
the smoothed version of these quantities will be diverging at the rate $O(h^{s-d})$.
Rescaling them by $O(h^{d-s})$ will uniformly bound all of them (this is the same idea as Lemma~\ref{lem::smooth::bias}).

In addition to equation \eqref{eq::lem12::fact1},
there is another useful fact about the region $\tilde{\K}_s(h)$:
when $\frac{nh^{s+4}}{\log n}\rightarrow\infty$ and $h\rightarrow 0$, 
\begin{equation}
\begin{aligned}
\sup_{x\in\tilde{\K}_s(h)}h^{d-s} |\hat{p}_n(x)-p_h(x)| &= O_P\left(\sqrt{\frac{\log n}{nh^{s}}}\right)\\
\sup_{x\in\tilde{\K}_s(h)}h^{d-s} \|\hat{g}_n(x)-g_h(x)\|_{\max} &= O_P\left(\sqrt{\frac{\log n}{nh^{s+2}}}\right)\\
\sup_{x\in\tilde{\K}_s(h)}h^{d-s} \|\hat{H}_n(x)-H_h(x)\|_{\max} &= O_P\left(\sqrt{\frac{\log n}{nh^{s+4}}}\right).
\end{aligned}
\label{eq::lem12::fact2}
\end{equation}
The proof of these quantity follows the same way as the proof of Lemma~\ref{lem::variance} and Theorem~\ref{thm::KDE}.
Moreover, the $\sqrt{\log n}$ factor will disappear in equation \eqref{eq::lem12::fact2}
if the bound is taken for a given point (or a fixed sequence of points).


We first show that $\|c_h-c\| = O(h)$.
When $h\rightarrow 0$, we can always find a critical point $c_h$ of $p_h$
such that $c_h\rightarrow c$ 
(because $p_h$ is essentially a smoothed version of the singular measure).
Note that such a point $c_h$ serves as an `approximation' of $c$ in the function $p_h$.
Moreover, $d(c_h,\K_s) < h$ because regions outside $h$ distance of $\K_s$
will not be affected by $\K_s$.
For any point $x\in\K$, let $\pi_s(x)\in\K_s$ be the projection from $x$ onto $\K_s$.
Because $\K_s$ has positive reach, the projection $\pi_s(c_h)$ will be unique when $h$ 
is less than the reach of $\K_s$.
Note that the fact that $d(c_h,\K_s) < h$ implies $d(c_h,\pi_s(c_h))=d(c_h,\K_s) < h$.
By assumption (C), there is a fixed distance between $c$ and any lower dimensional support
so $\pi_s(c_h)\in\K_s(h)$ when $h$ is sufficiently small (namely, the projected
point will be in the good region).

Because $d(c_h,\pi_s(c_h))<h$ and the (scaled) Hessian $C^\dagger_sh^{d-s}H_h$ is uniformly bounded
within $\tilde{K}_s(h)$ (equation \eqref{eq::lem12::fact1}),
Taylor theory implies
$$
C^\dagger_sh^{d-s}\|\underbrace{ g_h(c_h)}_{=0} - g_h(\pi_s(c_h))\|_{\max}  = O(\|c_h-\pi_s(c_h)\|) = O(h).
$$ 
Note that $g_h(x) = \nabla p_h(x)$ so $\|g_h(x)\|_{\max} \geq\|\nabla_{T_s(x)}p_h(x)\|_{\max}$
(recall that $\nabla_{T_s(x)}$ is the gradient with respect to the tangent space so it has fewer degree of freedom than $\nabla$).
Thus, 
\begin{align*}
C^\dagger_sh^{d-s}\|g_h(\pi_s(c_h))\|_{\max} &=O(h)\\ 
&\geq \|C^\dagger_sh^{d-s}\nabla_{T_s(\pi_s(c_h))}g_h(\pi_s(c_h))\|_{\max}\\
&=\|\nabla_{T_s(\pi_s(c_h))} \rho(\pi_s(c_h))\|_{\max} + O(h^{2\bigwedge m_{\min}}) .
\end{align*}
Note that the last equality is from Theorem~\ref{thm::KDE}.
Because $m_{\min}\geq 1$, the above inequality further implies 
\begin{equation}
\|\nabla_{T_s(\pi_s(c_h))} \rho(\pi_s(c_h))\|_{\max} = O(h).
\label{eq::lem12::1}
\end{equation}
When $h\rightarrow 0$, region on $\K_s$ with a low gradient of $\rho(x)$ must be around a critical point 
since $\rho$ on $\K_s$ is a Morse function (assumption (P2)).
Because the eigenvalues of the Hessian of $\rho(x)$ 
at $c$ is uniformly bounded away from $0$ (an natural outcome of a Morse function),
the gradient changes linearly around $c$.
As a result, equation \eqref{eq::lem12::1} implies $\|\pi_s(c_h)-c\|=O(h)$.
This, together with the fact that $\|\pi_s(c_h)-c_h\| = O(h)$,
further implies that 
$\|c_h-c\| = O(h)$.


To prove that $\|\hat{c}-c_h\|=O_P\left(\sqrt{\frac{1}{nh^{s+2}}}\right)$,
by equation \eqref{eq::lem12::fact2}
the gradient of any point $x\in\tilde{\K}_s(h)$ is
$$
h^{d-s}\left(\hat{g}_n(x) - g_h(x)\right) = O_P\left(\sqrt{\frac{1}{nh^{s+2}}}\right).
$$
Plugging-in $x=c_h$ into the above equality and using equation \eqref{eq::lem12::fact2},
we obtain
\begin{align*}
h^{d-s}\left(\hat{g}_n(c_h) - \underbrace{g_h(c_h)}_{=0}\right) & = O_P\left(\sqrt{\frac{1}{nh^{s+2}}}\right)\\
& = h^{d-s}\left(\hat{g}_n(c_h) - \underbrace{\hat{g}_n(\hat{c})}_{=0}\right)\\
& = h^{d-s} \hat{H}_h(c_h) (c_h-\hat{c}) + O_P(\|c_h-\hat{c}\|^2)\\
& = h^{d-s} \underbrace{\left(H_h(c_h)+O_P\left(\sqrt{\frac{\log n}{nh^{s+4}}}\right)\right)}_{\mbox{by \eqref{eq::lem12::fact2}}} (c_h-\hat{c})  + O_P(\|c_h-\hat{c}\|^2)\\
& = h^{d-s} H_h(c_h) (c_h-\hat{c}) + o_P(\|c_h-\hat{c}\|).
\end{align*}
Since all eigenvalues of $h^{d-s}H_h(c_h)$ are uniformly bounded away from $0$ when $h\rightarrow 0$
(this follows from the fact that $\rho(x)$ is a Morse function and $K^{(2)}(0)<0$), its inverse exists
so $\|c_h-\hat{c}\| = O_P\left(\sqrt{\frac{1}{nh^{s+2}}}\right)$.
Thus, putting it altogether we obtain
$\|\hat{c}-c\| \leq \|c_h-\hat{c}\| + \|c_h-c\| = O(h)+ O_P\left(\sqrt{\frac{1}{nh^{s+2}}}\right)$,
which proves the stability of critical points.

{\bf Critical levels.}
To derive the rate of $|\hat{\alpha}_n(\hat{c})-\alpha(c)|$, 
we decompose it into 
\begin{equation*}
|\hat{\alpha}_n(\hat{c})-\alpha(c)| \leq |\hat{\alpha}_n(\hat{c})-\hat{\alpha}_n(c)| + |\hat{\alpha}_n(c)-\alpha(c)|.
\end{equation*}
Because the KDE around $\hat{c}$ behaves quadratically, i.e., 
for any $\mu\in\R^d$ such that $\|\mu\|$ is small, $\hat{p}_n(\hat{c}+\mu)-\hat{p}_n(\hat{c})= O_P(\|\mu^2\|)$ and thus 
$$
|\hat{\alpha}_n(\hat{c})-\hat{\alpha}_n(c)| = O\left(\|\hat{c}-c\|^2\right)=O(h^2)+O_P\left(\frac{1}{nh^{s+2}}\right),
$$
which is dominated by the second term $|\hat{\alpha}_n(c)-\alpha(c)|= O\left(\delta^{\frac{s}{s+1}}_{n,h,s}\right)$
(by Theorem~\ref{thm::alpha::point}).
Thus, the convergence rate is $|\hat{\alpha}_n(\hat{c})-\alpha(c)| = O\left(\delta^{\frac{s}{s+1}}_{n,h,s}\right)$.

{\bf Eigenvalues.}
For the eigenvalues, 
it is easy to see that when we are moving away from $c$ along a direction that is normal to $\K_s$,
the estimated density is going down.
Thus, these $(d-s)$ directions must have negative eigenvalues (dimension of the normal subspace is $d-s$).
In addition, the original generalized Hessian matrix at $c$ has $n(c)$ negative eigenvalues.
So the total number of negative eigenvalues of the Hessian matrix of $\hat{p}_n$ at $c$ is $n(s)+d-s$.
Because $\hat{c}$ is converging to $c$, the sign of negative eigenvalues also converges, which proves the lemma.

\end{proof}

\begin{proof}[Proof of Lemma~\ref{lem::DCP}]
{\bf First assertion: Location of DCPs.}
By Theorem~\ref{thm::KDE},
the scaled KDE is uniformly consistent in both density estimation and gradient estimation
in the good region $\K(h)$.

The estimated DCPs are points satisfying $\nabla \hat{p}_n(x)= 0$.
Therefore, when $\delta^{(2)}_{n,h,d}\overset{P}{\rightarrow} 0$, 
the only areas in $\K(h)$ such that $\nabla \hat{p}_n(x)=0$ will be regions where $\nabla_{T_{\tau(x)}(x)} \rho(x)$ is small. 
This can only be the regions around the generalized critical points.
As a result, we cannot have any DCPs within $\K(h)$ when $\delta^{(2)}_{n,h,d}\overset{P}{\rightarrow} 0$.

{\bf Second assertion: Number of estimated critical points.}
This follows directly from Lemma~\ref{lem::DCP_level} and Assumption (A) and from the fact that $\delta^{(2)}_{n,h,d}\overset{P}{\rightarrow} 0$.

{\bf Third assertion: No local modes.}
By assumption (B), the only case where the creation of a connected component occurs is a (generalized) local mode.
These population local modes correspond to elements in $\hat{\mathcal{C}}$.
Thus, any estimated local mode in $\hat{\mathcal{D}}$ does not have a population target so they are away from the population local modes
and by first assertion, it has to be in the bad region $\K^C(h)$.
It is easy to see that we cannot have any estimated local mode under such a constraint when
we have the gradient and Hessian consistency of the scaled KDE
($\delta^{(2)}_{n,h,d}\overset{P}{\rightarrow} 0$). 

\end{proof}

\begin{proof}[Proof of Lemma~\ref{lem::DCP_level}]
Let $c$ be a DCP and $\varpi_0(c)$ be the corresponding $\alpha$-level of merging.
By assumption (A), $c$ is a unique point for $\alpha_0(c)\in \mathcal{D}$ and
for any sufficiently small $\epsilon$,
there is a unique connected component $C_\epsilon\in \mathbb{A}_{\varpi_0(c)+\epsilon}$
and a support $\K_\ell$ with $\ell\leq\xi(\varpi_0(c))$ 
such that $x\in\K_\ell$, $c\notin C_\epsilon$ and $d(c,C_\epsilon)\rightarrow 0$ when $\epsilon\rightarrow 0$.

The idea of the proof is to find $\hat{\varpi}^+$ and $\hat{\varpi}^-$ such that
the merging has not yet happened in the set $\mathbb{A}_{\hat{\varpi}^+}$
but the merging has happened in the set $\mathbb{A}_{\hat{\varpi}^-}$.
Then we know the actual value $\hat{\varpi}_n(\hat{c})$ lies within the interval 
$[\mathbb{A}_{\hat{\varpi}^-}, \mathbb{A}_{\hat{\varpi}^+}]$.

{\bf Case: Lower bound.}
By Theorem~\ref{thm::alpha::point}, any point $y\in K(h)$ satisfies $|\hat{\alpha}_n(y)-\alpha(y)| = \delta_{n,h,\tau(y)}$.
Thus, for any $C_\epsilon$ defined as the connected component within $\mathbb{A}_{\varpi_0(x)+\epsilon}$
that is about to merge with $\K_\ell$, 
$$
\inf_{y\in C_\epsilon}\hat{\alpha}_n(y)> \varpi_0(x) - \delta_{n,h,\xi(\varpi_0(c))+1}.
$$
This is because $\inf_{y\in C_\epsilon}\alpha(y)\geq \varpi_0(c)$ by definition and for $C_\epsilon\cap \K(h)$,
we have the uniform bound from Theorem~\ref{thm::alpha::point}. 
For $y\in C_\epsilon\backslash \K(h)$, the estimated $\hat{\alpha}_n(y)$ will be influenced by $\K_\ell$, which is a lower dimensional support.
So the estimated $\alpha$ value will be more than $\varpi_0(c) - \delta_{n,h,\xi(\varpi_0(c))+1}$. 
Thus, we can pick the lower bound $\mathbb{A}_{\hat{a}^-} = \varpi_0(c) - \delta_{n,h,\xi(\varpi_0(c))+1}$.

{\bf Case: Upper bound.}
To prove the upper bound, the idea is very simple. 
We show that when the $\alpha$-level is high enough, $\hat{\mathbb{A}}_{\varpi'}$ will not 
contain the set $\tilde{C}_\ell\oplus h$ where $c\in \tilde{C}_\ell$ and $\tilde{C}_\ell$ is a connected component of $\mathbb{A}_{\varpi_0}$.

Let $\tilde{C}_\ell$ be defined as above. 
Because $\tilde{C}_\ell$ is a subset of $\bigcup_{s\leq \ell}\K_s$,
the set $C_\epsilon \cap (\tilde{C}_\ell\oplus h)$ is always within the bad region $\K^C(h)$. 
Thus, $P(C_\epsilon \cap (\tilde{C}_\ell\oplus h))\leq P(\K^C(h)) = O(m^{2\bigwedge m_{\min}})$ by Lemma~\ref{lem::bad}.

Now we consider the boundary $\partial (\tilde{C}_\ell\oplus h) = \{x: d(x,\tilde{C}_\ell) = h\}$.
Because $P(C_\epsilon \cap (\tilde{C}_\ell\oplus h))\leq O(m^{2\bigwedge m_{\min}})$,
$$
\sup_{x\in \partial (\tilde{C}_\ell\oplus h)} \alpha(x) \leq \varpi_0(c) + O(m^{2\bigwedge m_{\min}}).
$$
Moreover, outside the boundary $\partial (\tilde{C}_\ell\oplus h)$ we can apply Theorem~\ref{thm::alpha::point}
to bound $\hat{\alpha}_n(x)-\alpha(x)$.
Thus, 
$$
\sup_{x\in \partial (\tilde{C}_\ell\oplus h)} \hat{\alpha}_n(x) \leq \varpi_0(c) + O(m^{2\bigwedge m_{\min}}) + \delta_{n,h,\xi(\varpi_0(c))+1}.
$$
This suggests that 
$\mathbb{A}_{\hat{\varpi}^+} = a_0 + O(m^{2\bigwedge m_{\min}}) + \delta_{n,h,\xi(\varpi_0(c))+1} = a_0 + \delta_{n,h,\xi(\varpi_0(c))+1}$
because $O(m^{2\bigwedge m_{\min}})$ is part of the term $\delta_{n,h,\xi(\varpi_0(c))+1}$.

Thus, the quantity $\hat{\alpha}_n(\hat{c})$ must lie within $a_0 \pm \delta_{n,h,\xi(\varpi_0(c))+1}$, which is the desired bound.
Because there is a topological change in the upper level set for $\hat{p}_n$ at such $\hat{\alpha}_n(\hat{c})$, 
$\hat{c}$ must be a critical point of $\hat{p}_n$. This completes the proof.



\end{proof}

\begin{proof}[Proof of Theorem~\ref{thm::top::p}]

The main idea is to show that there exists a constant $a_0>0$ such that
\begin{equation}
\max_{j=0,1,2}\max_{s=0,\cdots, d}|\delta^{(j)}_{n,h,s}|< a_0 \Longrightarrow T_{\hat{\alpha}_n}\overset{T}{\approx} T_\alpha.
\label{eq::top::1}
\end{equation}
Note that $\delta^{(0)}_{n,h,s} = \delta_{n,h,s}$.


By assumption (A), there exists a constant $a_1 = \min\{|\alpha_1-\alpha_2|: \alpha_1,\alpha_2 \in\mathcal{A}, \alpha_1\neq \alpha_2 \}>0$.
Without loss of generality, let the elements in $\mathcal{A}$ be $\alpha_1>\alpha_2>\cdots>\alpha_m$ (assume $\mathcal{A}$ has $m$ elements)
and $c(\alpha_j)$ be the corresponding critical point or DCP for level $\alpha_j$. 
By Lemma~\ref{lem::g_critical} and \ref{lem::DCP_level}, 
we can find a sequence of points $\hat{c}_1,\cdots,\hat{c}_m$ such that
each $\hat{c}_j$ is the estimator to $c(\alpha_j)$.
Again, by Lemma~\ref{lem::g_critical} and \ref{lem::DCP_level}, when 
\begin{equation}
\max_{s=0,\cdots, d}|\delta^{(0)}_{n,h,s}|< \frac{a_1}{2},
\label{eq::top::2}
\end{equation}
we have
$$
\hat{\alpha}_n(\hat{c}_1)>\cdots>\hat{\alpha}_n(\hat{c}_m).
$$
Namely, the ordering will be the same when $\max_{s=0,\cdots, d}|\delta^{(0)}_{n,h,s}|$ is sufficiently small.
Thus, we need to prove that (1) there is no other connected component of $\hat{\alpha}_n$ and
(2) there is no other merging point to get the topological equivalent.

By Lemma~\ref{lem::DCP}, there will be no estimated local mode in $\hat{\mathcal{D}}$ so there will be no
extra connected components. 
In the proof of Lemma~\ref{lem::DCP_level}, we showed that
each estimator of a DCP corresponds to a merging in the estimated level sets,
and similarly, if connected components are merged at a saddle point or a local minimum, 
the corresponding estimator will be a merging point.
To use the conclusion of Lemma~\ref{lem::g_critical}, we need uniform consistency in both gradient and Hessian estimation.
Namely, we need
$\delta^{(1)}_{n,h,s}, \delta^{(2)}_{n,h,s}$ to be sufficiently small.
The gradient consistency regularizes the positions of estimators of generalized critical points and
the bound on the Hessian matrix guarantees that the eigenvalues retain the same sign. 
Thus, to apply Lemma~\ref{lem::g_critical}, there is some $a_2>0$ such that the conclusion of Lemma~\ref{lem::g_critical}
holds whenever 
\begin{equation}
\max_{j=1,2}\max_{s=0,\cdots, d}|\delta^{(j)}_{n,h,s}|<a_2.
\label{eq::top::3}
\end{equation}
To use Lemma~\ref{lem::DCP_level}, we need $\delta^{(1)}_{n,h,s}$ to be sufficiently small.
This comes from Theorem~\ref{thm::alpha::point}. 

Combining equations \eqref{eq::top::2} and \eqref{eq::top::3}, we obtain equation \eqref{eq::top::1}.
For the quantity $\delta^{j}_{n,h,s}$, the slowest rate occurs at $j=2$ and $s=d$.
Thus, 
there are some constants $c_1,c_2>0$ such that 
\begin{equation}
\max_{j=0,1,2}\max_{s=0,\cdots, d}|\delta^{(j)}_{n,h,s}| < c_1 h^{2\bigwedge m(x)} + c_2 Z_d,
\label{eq::top::p3}
\end{equation}
where $Z_d = O_P\left(\sqrt{\frac{\log n}{nh^{d+4}}}\right)$ is the stochastic variation.
Recall that in the proof of Theorem~\ref{thm::KDE}, $Z_d$ is 
$$
Z_d = \sup_{x\in \K}\|\nabla\nabla\hat{p}_n(x)-\nabla\nabla p_h(x)\|_{\max}.
$$
By Assumption (K2) and Talagrand's inequality \citep{Gine2002,Einmahl2005}, 
there are constants $c_3, c_4>0$ such that when $\frac{nh^{d+4}}{\log n}\rightarrow \infty$
\begin{equation}
P(Z_d>t) <c_3\cdot e^{-c_4\cdot nh^{d+4}\cdot t^2}.
\label{eq::top::p4}
\end{equation}
Thus,  
using equations \eqref{eq::top::p3} and \eqref{eq::top::p4},
when $h^{2\bigwedge m(x)}< \frac{a_0}{c_1}$,
\begin{align*}
P(T_{\hat{\alpha}_n}\overset{T}{\approx} T_\alpha) 
&\geq P(\max_{s=0,\cdots, d}|\delta^{(2)}_{n,h,s}| < a_0) \\
&\geq1-P\left(Z_d>\frac{a_0}{c_2}\right)\\
&\geq 1-c_3 \cdot e^{-c_4\cdot nh^{d+4}\cdot (\frac{a_0}{c_2})^2} \\
&= 1-c_3 \cdot e^{-c_5\cdot nh^{d+4}},
\end{align*}
where $c_5 = c_4\cdot (\frac{a_0}{c_2})^2>0$.
This proves the desired result.
\end{proof}


\bibliographystyle{abbrvnat}
\bibliography{GTree.bib}

\end{document}